\newcommand{\bydef}{:=}
\newcommand{\veps}{\varepsilon}
\newcommand{\wh}[1]{\widehat{#1}}
\newcommand{\wt}[1]{\widetilde{#1}}
\newcommand{\wb}[1]{\overline{#1}}
\newcommand{\id}{\mathrm{id}}
\newcommand{\gr}{\mathrm{gr}}
\newcommand{\frM}{\mathfrak{M}}
\newcommand{\cA}{\mathcal{A}}
\newcommand{\cD}{\mathcal{D}}
\newcommand{\cF}{\mathcal{F}}
\newcommand{\cI}{\mathcal{I}}
\newcommand{\cK}{\mathcal{K}}
\newcommand{\cL}{\mathcal{L}}
\newcommand{\cM}{\mathcal{M}}
\newcommand{\cO}{\mathcal{O}}
\newcommand{\cR}{\mathcal{R}}
\newcommand{\cV}{\mathcal{V}}
\newcommand{\cW}{\mathcal{W}}
\newcommand{\cZ}{\mathcal{Z}}
\newcommand{\frN}{\mathfrak{N}}
\newcommand{\ZZ}{\mathbb{Z}}
\newcommand{\RR}{\mathbb{R}}
\newcommand{\FF}{\mathbb{F}}
\newcommand{\KK}{\mathbb{K}}
\newcommand{\chr}[1]{\mathrm{char}\,#1}
\DeclareMathOperator{\Hom}{\mathrm{Hom}}
\DeclareMathOperator{\End}{\mathrm{End}}
\DeclareMathOperator{\Aut}{\mathrm{Aut}}
\DeclareMathOperator{\inaut}{\mathrm{Int}}
\DeclareMathOperator{\supp}{\mathrm{Supp}\,}
\newcommand{\Ad}{\mathrm{Ad}}
\newcommand{\Ind}{\mathrm{Ind}}
\newtheorem{theorem}{Theorem}[section]
\newtheorem{proposition}[theorem]{Proposition}
\newtheorem{corollary}[theorem]{Corollary}
\theoremstyle{definition}
\newtheorem{df}[theorem]{Definition}
\newtheorem{example}[theorem]{Example}
\theoremstyle{remark}
\newtheorem{remark}[theorem]{Remark}
\numberwithin{equation}{section}
\newenvironment{romanenumerate}
 {\begin{enumerate}
 
 }{\end{enumerate}}
\begin{document}

\title{Graded simple modules and loop modules}

\author[A.~Elduque]{Alberto Elduque${}^\star$}
\address{Departamento de Matem\'{a}ticas
 e Instituto Universitario de Matem\'aticas y Aplicaciones,
 Universidad de Zaragoza, 50009 Zaragoza, Spain}
\email{elduque@unizar.es}
\thanks{${}^\star$ Supported by the Spanish Ministerio de Econom\'{\i}a y Competitividad---Fondo Europeo de Desarrollo Regional (FEDER) MTM2013-45588-C3-2-P, and by the Diputaci\'on General de Arag\'on---Fondo Social Europeo (Grupo de Investigaci\'on de \'Algebra)}

\author[M. Kochetov]{Mikhail Kochetov${}^\dagger$}
\address{Department of Mathematics and Statistics,
 Memorial University of Newfoundland,
 St. John's, NL, A1C5S7, Canada}
\email{mikhail@mun.ca}
\thanks{${}^\dagger$ Supported by Discovery Grant 341792-2013 of the Natural Sciences and Engineering Research Council (NSERC) of Canada}

\subjclass[2010]{Primary 16W50; Secondary 17B70}

\keywords{Graded; simple; module; loop module; centralizer; central image; inertia group; graded Brauer invariant; graded Schur index}

\date{}

\begin{abstract}
Necessary and sufficient conditions are given for a $G$-graded simple module over a unital associative algebra, graded by an abelian group $G$, 
to be isomorphic to a loop module of a simple module, as well as for two such loop modules (associated to a subgroup $H$ of $G$) to be isomorphic to each other. 
Under some restrictions, these loop modules are completely reducible (as ungraded modules), and some of their invariants --- inertia group, graded Brauer invariant and Schur index --- which were previously defined for simple modules over graded finite-dimensional semisimple Lie algebras over an algebraically closed field of characteristic zero, are now considered in a more general and natural setting.
\end{abstract}

\maketitle



\section{Introduction}\label{se:intro}

Finite-dimensional graded modules for semisimple Lie algebras over an algebraically closed field $\FF$ of characteristic zero have been studied in \cite{EK_Israel} (see also \cite[Appendix A]{EK_D4}). In particular, the graded simple modules are expressed in terms of simple modules as follows. 

Let $\cL$ be a (finite-dimensional) semisimple Lie algebra over $\FF$ graded by an abelian group $G$, which can be assumed, without loss of generality, to be finitely generated. A $G$-grading $\cV=\bigoplus_{g\in G} \cV_g$ on an $\cL$-module $\cV$ is said to be \emph{compatible} if $\cL_g \cV_h\subseteq \cV_{gh}$ for all $g,h\in G$, that is, if it makes $\cV$ a graded $\cL$-module. Moreover, $\cV$ is said to be \emph{graded simple} if it does not contain any proper nonzero graded submodule. 

The $G$-grading on $\cL$ is given by a homomorphism of algebraic groups $\wh{G}\rightarrow \Aut(\cL)$, $\chi\mapsto \alpha_\chi$, where $\wh{G}=\Hom(G,\FF^\times)$ is the group of characters, so that the homogeneous components are given by $\cL_g=\{ x\in \cL:\alpha_\chi(x)=\chi(g)x\ \forall \chi\in \wh{G}\}$.

Fix a Cartan subalgebra and a system of simple roots for $\cL$, and let $\Lambda^+$ denote the set of dominant integral weights. The group of characters $\wh{G}$ (a quasitorus) acts on the isomorphism classes of irreducible $\cL$-modules, and hence on $\Lambda^+$. For any dominant integral weight $\lambda \in\Lambda^+$, the \emph{inertia group} is 
\[
K_\lambda=\{\chi\in \wh{G}: \chi\ \text{fixes}\ \lambda\}.
\]
Then $K_\lambda$ is Zariski closed in $\wh{G}$ and $[\wh{G}:K_\lambda]$ is finite. Therefore, 
\[
H_\lambda\bydef (K_\lambda)^\perp=\{g\in G: \chi(g)=1\ \forall \chi\in K_\lambda\}
\]
is a finite subgroup of $G$ of size $\lvert H_\lambda\rvert=\lvert \wh{G}\lambda\rvert$, and $K_\lambda$ is canonically isomorphic to the group of characters of $G/H_\lambda$.

Let $\cV_\lambda$ be the simple $\cL$-module with highest weight $\lambda$ and $\varrho_\lambda: U(\cL)\rightarrow \End_\FF(\cV_\lambda)$ the associated representation of the universal enveloping algebra. The $G$-grading on $\cL$ induces naturally a $G$-grading on $U(\cL)$. In general, we cannot expect $\cV_\lambda$ to admit a compatible $G$-grading. However, there is a homomorphism of algebraic groups
\[
\begin{split}
K_\lambda&\longrightarrow \Aut\bigl(\End_\FF(\cV_\lambda)\bigr)\\
\chi&\mapsto \ \tilde\alpha_\chi,
\end{split}
\]
where $\tilde\alpha_\chi\bigl(\varrho_\lambda(x)\bigr)=\varrho_\lambda\bigl(\alpha_\chi(x)\bigr)$ for all $x\in\cL$. This corresponds to a $(G/H_\lambda)$-grading on $\End_\FF(\cV_\lambda)$ such that $\varrho_\lambda$ becomes a homomorphism of $(G/H_\lambda)$-graded algebras, where the $(G/H_\lambda)$-grading on $\cL$ is given by $\cL_{gH_\lambda}=\bigoplus_{h\in H_\lambda}\cL_{gh}$ for all $g\in G$ (a coarsening of the $G$-grading).
The class $[\End_\FF(\cV_\lambda)]$ in the $(G/H_\lambda)$-graded Brauer group (see \cite[\S 2]{EK_Israel} or the end of Section \ref{se:fin_dim_cent} here) is called the \emph{graded Brauer invariant} of $\lambda$, and the degree of the graded division algebra representing $[\End_\FF(\cV_\lambda)]$ is called the \emph{graded Schur index} of $\lambda$.

For each $\wh{G}$-orbit $\cO$ in $\Lambda^+$, select a representative $\lambda$. If $k$ is the graded Schur index of $\lambda$, then the direct sum $\cV_\lambda^k$ of $k$ copies of $\cV_\lambda$ is equipped with a compatible $(G/H_\lambda)$-grading. Finally, let $\cW(\cO)$ be the induced module:
\[
\cW(\cO)= \Ind_{K_\lambda}^{\wh G}\cV_\lambda^k\bydef \FF\wh{G}\otimes_{\FF K_\lambda}\cV_\lambda^k.
\]
(Here $\FF \wh{G}$ denotes the group algebra of $\wh{G}$, and similarly for $\FF K_\lambda$.) Then, by \cite[Theorem 8]{EK_Israel}, up to isomorphism and shift of the grading, these $\cW(\cO)$ are the $G$-graded simple finite-dimensional $\cL$-modules.

In this way, the graded simple modules are obtained very explicitly in terms of simple modules. The inertia groups, graded Brauer invariants and Schur indices have been computed for the classical simple Lie algebras in \cite{EK_Israel} and \cite[Appendix A]{EK_D4}.

A different, though less explicit, approach to describing the graded simple modules is given by Billig and Lau in \cite{BL}, based on the notion of \emph{thin coverings}. It is proved in \cite[Theorem 1.4]{BL} that any graded simple module that contains a maximal (ungraded) submodule can be obtained from a simple module endowed with a thin covering. 

Finally, in a recent work by Mazorchuk and Zhao, it is shown that if $\cL$ is a Lie algebra over an algebraically closed field $\FF$, graded by a finitely generated abelian group $G$, and $\cW$ is a graded simple $\cL$-module with $\dim \cW <\lvert\FF\rvert$, then there is a subgroup $H$ of $G$ and a simple $\cL$-module $\cV$ with a compatible $G/H$-grading satisfying some extra conditions such that $\cW$ is the \emph{loop module} of $\cV$, that is, the subspace
\[
\bigoplus_{g\in G}\cV_{gH}\otimes g\subseteq\cV\otimes_\FF \FF G,
\]
with the $G$-grading given by $\cW_g=\cV_{gH}\otimes g$ and $\cL$-action given by $x(v\otimes g)=xv\otimes g'g$ for all $g,g'\in G$, $x\in\cL_{g'}$ and $v\in\cV_g$ (see \cite[Theorem 31]{MZ}).

\medskip

The aim of this paper is twofold. First we will look at \cite{BL} and \cite{MZ} from a different perspective, based on the results by Allison, Berman, Faulkner and Pianzola \cite{ABFP} on the connection between graded simple algebras and loop algebras of simple, graded algebras. This will allow us to extend the results in \cite{MZ} and, we hope, put them in a more natural context. Second, we will relate this general (and less explicit) approach with our results in \cite{EK_Israel} and \cite{EK_D4}.
Since modules for a Lie algebra are just left modules for its universal enveloping algebra, in this paper, we will mostly work in the more general setting of left modules for an associative algebra. The ground field $\FF$ will be arbitrary unless indicated otherwise.

It should also be noted that any (nonassociative) algebra $\cA$ is a left module for its \emph{multiplication algebra}, that is, the unital associative subalgebra of $\End_\FF(\cA)$ generated by the left and right multiplications by elements of $\cA$. (Recall that the centralizer of this module is called the \emph{centroid} of $\cA$, which is commutative if $\cA=\cA^2$.) Hence, our results in this paper will include the case of graded simple algebras, thus generalizing \cite{ABFP}.

\medskip

The paper is organized as follows. In Section \ref{se:graded_simple}, we recall the main definitions concerning graded algebras and modules. In particular, the centralizer of a graded simple module will play a key role throughout the paper.

Section \ref{se:loop_modules} is devoted to loop modules. Given an abelian group $G$, a $G$-graded unital associative algebra $\cR$ and a subgroup $H$ of $G$, $\cR$ is naturally $G/H$-graded. Let $\pi:G\rightarrow G/H$ be the natural homomorphism. For a left $G/H$-graded left $\cR$-module $\cV$, the loop module $L_\pi(\cV)$ is the subspace $\bigoplus_{g\in G}\cV_{gH}\otimes g\subseteq\cV\otimes\FF G$, which is a $G$-graded left $\cR$-module. It turns out that a $G$-graded left $\cR$-module $\cW$ is isomorphic to a loop module $L_\pi(\cV)$ for a $G/H$-graded module $\cV$ if and only if the graded centralizer of $\cW$ contains a graded subfield that is isomorphic, as a $G$-graded algebra, to the group algebra $\FF H$ (Proposition \ref{pr:LpiVFH}). If the ground field $\FF$ is algebraically closed, all  finite-dimensional $G$-graded simple left $\cR$-modules satisfy this condition and hence are isomorphic to loop modules (see Proposition \ref{pr:Falgclosed}). If, in addition, $H$ is finite and the characteristic of $\FF$ does not divide $\lvert H\rvert$ then the loop module $L_\pi(\cV)$ is isomorphic to the induced module $\FF \wh{G}\otimes_{\FF K} \cV$ (Proposition \ref{pr:loop_induced}).

In Section \ref{se:groupoids}, we define two groupoids, $\frM(\pi)$ and $\frN(\pi)$, for $G$, $H$, $\pi$ and $\cR$ as above. The objects of $\frM(\pi)$ are the simple, central, $G/H$-graded left $\cR$-modules with a certain tightness condition on the grading, while the objects of $\frN(\pi)$ are the pairs $(\cW,\cF)$ where $\cW$ is a $G$-graded simple left $\cR$-module and $\cF$ is a maximal graded subfield of its centralizer that is isomorphic to $\FF H$ as a graded algebra. The loop functor $L_\pi:\frM(\pi)\rightarrow \frN(\pi)$ is defined on objects as $\cV\mapsto \bigl(L_\pi(\cV),L_\pi(\FF 1)\bigr)$ and shown to be faithful and essentially surjective (Theorem \ref{th:MpiNpi}). The objects of $\frM(\pi)$ whose image in $\frN(\pi)$ is isomorphic to $(\cW,\cF)$ are determined explicitly as the \emph{central images} of $(\cW,\cF)$. A certain extension of the loop functor turns out to be an equivalence of categories (Theorem \ref{th:equiv_cat}).

Section \ref{se:fin_dim_cent} deals with the case of finite $H$. We will also assume that the ground field $\FF$ is algebraically closed and its characteristic does not divide $\lvert H\rvert$. Then, for any object $(\cW,\cF)$ of $\frN(\pi)$, $\cW$ is completely reducible as an ungraded $\cR$-module, and the simple submodules of $\cW$ are, up to isomorphism, the central images of $(\cW,\cF)$, so they are endowed with the structure of $G/H$-graded modules (Theorem \ref{th:Wcomp.red.}). Moreover, the isotypic components of $\cW$ are actually $G/Z$-graded simple modules, where $Z\le H$ is the support of the center of the centralizer $C(\cW)$ of $\cW$ (Theorem \ref{th:Deiej} and Corollary \ref{co:Wei}). For any $G$-graded simple left $\cR$-module $\cW$ such that the dimension of $C(\cW)$ is finite and not divisible by $\chr{\FF}$, we will define the \emph{inertia group}, \emph{graded Brauer invariant}, and \emph{graded Schur index}, thus extending the scope of the definitions given in \cite{EK_Israel}. 

In Section \ref{se:simple_centralizer}, we consider a finite-dimensional $G$-graded simple left $\cR$-module $\cW$ and assume that $\FF$ is algebraically closed and $\chr{\FF}$ does not divide the dimension of $C(\cW)$. We prove that if $\cV$ is any simple (ungraded) submodule of $\cW$, then $\End_\FF(\cV)$ is endowed with a unique grading by $G/Z$ compatible with the induced $G/Z$-grading on $\cR$, where $Z$ is the support of the center of $C(\cW)$, and the graded Brauer invariant of $\cW$ is precisely the class $[\End_\FF(\cV)]$ in the $G/Z$-graded Brauer group (Theorem \ref{th:Dcentral} and Corollary \ref{co:Dcentral}). Moreover, $\cV$ is endowed with a structure of $G/H$-graded module for any subgroup $H$ with $Z\leq H\leq G$ such that $\bigoplus_{h\in H}C(\cW)_h$ is a maximal commutative graded subalgebra of $C(\cW)$.

Finally, in Section \ref{se:char0}, assuming $\FF$ algebraically closed of characteristic zero, we give a necessary and sufficient condition for a finite-dimensional simple left $\cR$-module to be a submodule of a $G$-graded simple module (Theorem \ref{th:fin.ind.}). This condition is satisfied in the situation considered in \cite{EK_Israel}, which explains why every finite-dimensional simple module appears as a submodule of a graded simple module in that case.


\section{Graded simple modules}\label{se:graded_simple}

Throughout this work, $\FF$ denotes a ground field, which is arbitrary unless stated otherwise. 
All vector spaces, algebras and modules are assumed to be defined over $\FF$. 
We start by reviewing some basic definitions and facts about gradings. For material not included here, the reader is referred to \cite{EK_mon}.

Let $G$ be an abelian group (written multiplicatively, with neutral element $e$) and let $\cV$ be a vector space. 
A \emph{$G$-grading} on $\cV$ is a vector space decomposition
\begin{equation}\label{eq:Gamma}
\Gamma: \cV=\bigoplus_{g\in G}\cV_g.
\end{equation}
The \emph{support} of $\Gamma$ is the set $\supp(\Gamma)=\{g\in G: \cV_g\neq 0\}$. We will sometimes write $\supp(\cV)$ if $\Gamma$ is fixed.
If $0\ne v\in \cV_g$, $v$ is said to be \emph{homogeneous of degree $g$}, and $\cV_g$ is called the \emph{homogeneous component of degree $g$}. 
If the grading $\Gamma$ is fixed, then $\cV$ will be referred to as a \emph{graded vector space}. A subspace $\cW\subseteq \cV$ is said to be a \emph{graded subspace} if $\cW=\bigoplus_{g\in G}(\cW\cap \cV_g)$. A graded homomorphism of $G$-graded spaces is a linear map that preserves degrees. 

A linear map $f:\cV\rightarrow \cW$ of $G$-graded vector spaces is said to be \emph{homogeneous of degree $g$} if $f(\cV_h)\subseteq \cV_{gh}$ for all $h\in G$. Thus the graded homomorphisms are the homogeneous linear maps of degree $e$. Denote the space of all homogeneous linear maps of degree $g$ by $\Hom_g(\cV,\cW)$ and set $\Hom^{\textup{gr}}(\cV,\cW)\bydef\bigoplus_{g\in G}\Hom_g(\cV,\cW)$. If $\dim \cV$ is finite, then $\Hom^{\textup{gr}}(\cV,\cW)=\Hom(\cV,\cW)$, and thus $\Hom(\cV,\cW)$ becomes $G$-graded.

For a group homomorphism $\alpha:G\rightarrow H$ and a $G$-grading $\Gamma$ as in \eqref{eq:Gamma}, the decomposition ${}^\alpha\Gamma:\cV=\bigoplus_{h\in H}\cV_h'$, where $\cV_h'\bydef \bigoplus_{g\in \alpha^{-1}(h)}\cV_g$, is an $H$-grading on $\cV$, called the \emph{grading induced by $\Gamma$ by means of $\alpha$}. In particular if $\Gamma$ is a $G$-grading on $\cV$, $H$ is a subgroup of $G$ and $\pi:G\rightarrow G/H$ is the natural homomorphism, then ${}^\pi\Gamma$ will be called the \emph{$G/H$-grading induced by $\Gamma$}.

\smallskip

Given an algebra $\cA$ (not necessarily associative), a $G$-grading on $\cA$ is a $G$-grading as a vector space: $\cA=\bigoplus_{g\in G}\cA_g$, that satisfies $\cA_g\cA_h\subseteq \cA_{gh}$ for all $g,h\in G$. Then $\cA$ will be referred to as a \emph{$G$-graded algebra}. Note that if $\cA$ is unital then its identity element $1$ lies in $\cA_e$.

Let now $\cR$ be a unital associative $G$-graded algebra. A \emph{$G$-graded left $\cR$-module} is a left $\cR$-module $\cV$ that is also a $G$-graded vector space, $\cV=\bigoplus_{g\in G}\cV_g$, such that $\cR_g\cV_h\subseteq \cV_{gh}$ for all $g,h\in G$. The $G$-grading on $\cV$ is then said to be {\em compatible} with the $G$-grading on $\cR$. A \emph{$G$-graded right $\cR$-module} is defined similarly. On some occasions, the action of $\cR$ on $\cV$ will not be denoted by juxtaposition, but as $r\cdot v$, $r\bullet v$, etc.; in such cases we will refer to the $\cR$-modules as $(\cV,\cdot)$, $(\cV,\bullet)$, etc.
A \emph{graded submodule} is a graded subspace that is also a submodule. A homomorphism of $G$-graded modules (or $G$-graded homomorphism) is a linear map that is both a homomorphism of modules and of $G$-graded vector spaces. The vector space of graded homomorphisms between two $G$-graded left $\cR$-modules $\cV$ and $\cW$ will be denoted by $\Hom_\cR^G(\cV,\cW)$. Note that $\Hom_\cR^G(\cV,\cW)=\Hom_\cR(\cV,\cW)\cap\Hom_e(\cV,\cW)$. 
We will denote by ${}_\cR\mathrm{Mod}^G$ the abelian category whose objects are the $G$-graded left $\cR$-modules and whose morphisms are the $G$-graded homomorphisms.

We will follow the convention of writing endomorphisms of left modules on the right. Let $\cV$ and $\cW$ be $G$-graded left $\cR$-modules. The space $\Hom_\cR^{\textup{gr}}(\cV,\cW)\bydef \Hom^{\textup{gr}}(\cV,\cW)\cap \Hom_\cR(\cV,\cW)$ is a graded subspace in $\Hom^{\textup{gr}}(\cV,\cW)$. When $\cV=\cW$, we obtain a $G$-graded algebra $C^{\textup{gr}}(\cV)\bydef\Hom_\cR^{\textup{gr}}(\cV,\cV)$, called the \emph{graded centralizer} of $\cV$. Then $\cV$ becomes a graded $\bigl(\cR,C^{\textup{gr}}(\cV)\bigr)$-bimodule.

A nonzero $G$-graded left $\cR$-module $\cV$ is said to be \emph{$G$-graded simple} if its only graded submodules are $0$ and $\cV$. If $G$ or $\cR$ are clear from the context we may omit them and refer to graded left modules, graded simple modules, etc.

\begin{proposition}\label{pr:centralizer}
Let $G$ be an abelian group, let $\cR$ be a unital associative $G$-graded algebra, and let $\cV$ be a graded simple left $\cR$-module. Then its centralizer $C(\cV)\bydef \Hom_\cR(\cV,\cV)$ coincides with its graded centralizer $C^{\textup{gr}}(\cV)$.
\end{proposition}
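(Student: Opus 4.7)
The plan is to show that every $\cR$-module endomorphism $f$ of $\cV$ decomposes as a \emph{finite} sum of homogeneous $\cR$-module endomorphisms. The inclusion $C^{\textup{gr}}(\cV)\subseteq C(\cV)$ is immediate from the definitions; the content lies in the reverse inclusion.

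First I would construct candidate components. For each $g\in G$, let $\pi_g:\cV\to \cV_g$ denote the canonical projection associated with the grading, and for $f\in C(\cV)$ define a linear map $f_g:\cV\to\cV$ of degree $g$ by the rule $f_g(v)=\pi_{gh}\bigl(f(v)\bigr)$ for homogeneous $v\in\cV_h$, extended by linearity. From $f(v)=\sum_k\pi_k\bigl(f(v)\bigr)$, one gets the pointwise identity $f(v)=\sum_g f_g(v)$ for every homogeneous $v$, which is already a finite sum for each individual $v$.

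Next I would verify that each $f_g$ commutes with the $\cR$-action. For homogeneous $r\in\cR_k$ and $v\in\cV_h$, the key calculation is
\[
f_g(rv)=\pi_{gkh}\bigl(f(rv)\bigr)=\pi_{gkh}\bigl(rf(v)\bigr)
 =\pi_{gkh}\Bigl(\sum_j r\pi_j\bigl(f(v)\bigr)\Bigr)=r\pi_{gh}\bigl(f(v)\bigr)=rf_g(v),
\]
where the penultimate equality uses $\cR_k\cV_j\subseteq\cV_{kj}$, so that the projection onto degree $gkh$ picks out exactly the summand with $j=gh$. Thus $f_g\in\Hom_\cR(\cV,\cV)\cap\Hom_g(\cV,\cV)$.

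The main obstacle is that the decomposition $f=\sum_g f_g$ must be a \emph{finite} sum in order to live in $\Hom_\cR^{\textup{gr}}(\cV,\cV)=\bigoplus_g\Hom_{\cR,g}(\cV,\cV)$. This is precisely where graded simplicity enters: pick any nonzero homogeneous $v\in\cV_h$; then $\cR v$ is a nonzero graded submodule of $\cV$, hence $\cR v=\cV$, so $\cV$ is generated as an $\cR$-module by the single homogeneous element $v$. Since $f(v)$ has only finitely many nonzero homogeneous components, say supported on a finite set $F\subseteq G$, we have $f_g(v)=\pi_{gh}(f(v))=0$ whenever $gh\notin F$. The $\cR$-linearity of $f_g$ together with $\cR v=\cV$ then forces $f_g\equiv 0$ for every $g\notin Fh^{-1}$. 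Hence only finitely many $f_g$ are nonzero, and $f=\sum_g f_g\in C^{\textup{gr}}(\cV)$, completing the proof.
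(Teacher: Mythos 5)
Your proof is correct and rests on the same key idea as the paper's: graded simplicity gives a single homogeneous generator $v$ with $\cR v=\cV$, and the finite support of $f(v)$ then forces $f$ to have only finitely many nonzero homogeneous components. The only difference is presentational — you explicitly construct the degree-$g$ pieces $f_g$ and verify their $\cR$-linearity before using $\cR v=\cV$ to kill almost all of them, whereas the paper skips this and directly shows $\cV_h f\subseteq\cV_{hg_1}\oplus\cdots\oplus\cV_{hg_n}$ for all $h$ by applying the $\cR$-linearity of $f$ itself; your route is slightly longer but equally valid.
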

\begin{proof}
Let $0\ne v\in \cV_g$ be a homogeneous element. Since $\cV$ is graded simple, $\cV=\cR v$. For any $0\ne f\in C(\cV)$, we can write $vf=v_{gg_1}+\cdots +v_{gg_n}$ for some natural number $n$ and some homogeneous elements $v_{gg_i}\in \cV_{gg_i}$. Then, for any $h\in H$ and $r_h\in\cR_h$,
\[
(r_hv)f=r_h(vf)=r_hv_{gg_1}+\cdots +r_hv_{gg_n}.
\]
But $\cR_hv=\cV_{hg}$, so $\cV_{hg}f\subseteq \cR_h\cV_{hgg_1}\oplus\cdots \oplus \cR_h\cV_{hgg_n}$ for all $h\in G$. Therefore, $\cV_hf\subseteq \cV_{hg_1}\oplus\cdots\oplus\cV_{hg_n}$ for all $h\in G$, so $f\in \bigoplus_{i=1}^nC^{\textup{gr}}(\cV)_{g_i}\subseteq C^{\textup{gr}}(\cV)$.
\end{proof}

A unital associative graded algebra is a \emph{graded division algebra} if every nonzero homogeneous element is invertible. Commutative graded division algebras are called \emph{graded fields}. Schur's Lemma shows that the centralizer of a simple module is a division algebra. In the same vein, the graded Schur's Lemma shows that the centralizer of any graded simple module is a graded division algebra (see, for instance, \cite[Lemma 2.4]{EK_mon}), and hence the module is free over its centralizer.

A module $\cV$ is called \emph{central} (or \emph{Schurian}) if its centralizer $C(\cV)$ consists of the scalar multiples of the identity map. Similarly, a graded module $\cV$ will be called \emph{graded central} if $C(\cV)_e$ consists of the scalar multiples of the identity map.

\begin{theorem}\label{th:graded_central}
Let $G$ be an abelian group and let $\cR$ be a unital commutative $G$-graded algebra. Suppose that $\cV$ is a graded simple left $\cR$-module and let $\cD=C(\cV)$.
\begin{romanenumerate}
\item (Graded Density) If $v_1,\ldots,v_n\in\cV$ are homogeneous elements that are linearly independent over $\cD$, then for any $w_1,\ldots,w_n\in\cV$ there exists an element $r\in\cR$ such that $rv_i=w_i$ for $i=1,\ldots,n$.
\item $\cV$ is graded central if and only if $\cV\otimes_\FF\KK$ is a graded simple $(\cR\otimes_\FF\KK)$-module for any field extension $\KK/\FF$.
\end{romanenumerate}
\end{theorem}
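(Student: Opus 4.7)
The plan is to establish (i) by the standard Jacobson density argument adapted to the graded setting, exploiting the identification $C(\cV)=C^{\textup{gr}}(\cV)$ from Proposition \ref{pr:centralizer}, and then to deduce both implications of (ii) from it.

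For (i), I proceed by induction on $n$. The case $n=1$ follows from graded simplicity, since $\cR v_1$ is a nonzero graded submodule of $\cV$. For the inductive step, the left ideal $I=\{r\in\cR:rv_i=0\text{ for }i<n\}$ is graded (the $v_i$ being homogeneous), so $Iv_n$ is a graded submodule of $\cV$. If $Iv_n=\cV$, one combines an $r'$ given by the inductive hypothesis (satisfying $r'v_i=w_i$ for $i<n$) with an element of $I$ mapping $v_n$ to $w_n-r'v_n$. If instead $Iv_n=0$, then for each $i<n$ one defines $\tilde\psi_i\colon\cV\to\cV$ by $\tilde\psi_i(u)=rv_n$, where $r\in\cR$ is any element with $rv_i=u$ and $rv_j=0$ for $j\neq i$, $j<n$; the inductive hypothesis supplies such an $r$, and the assumption $Iv_n=0$ makes the definition independent of the choice. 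A direct check shows $\tilde\psi_i$ is $\cR$-linear, and then Proposition \ref{pr:centralizer} places it in $\cD$. Now choosing $r_i\in\cR$ with $r_iv_j=\delta_{ij}v_j$ for $j\le n-1$, one finds that $1-\sum_{i<n}r_i$ lies in $I$, whence $v_n=\sum_{i<n}r_iv_n=\sum_{i<n}\tilde\psi_i(v_i)$, a nontrivial $\cD$-linear dependence of $v_1,\dots,v_n$ and the desired contradiction.

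For the forward direction of (ii), let $\cW$ be a nonzero graded submodule of $\cV\otimes\KK$ and pick a homogeneous $w=\sum_{i=1}^n v_i\otimes\lambda_i\in\cW_g$ with $v_i\in\cV_g$, $\lambda_i\in\KK$, and $n$ minimal. Minimality forces both the $v_i$ and the $\lambda_i$ to be $\FF$-linearly independent; in fact the $v_i$ are $\cD$-linearly independent, since any homogeneous relation $\sum v_id_i=0$ with $d_i\in\cD_h$ and some $d_j\neq 0$ could be rewritten as $v_j=\sum_{i\neq j}v_i(d_id_j^{-1})$ — the nonzero homogeneous element $d_j$ of the graded division algebra $\cD$ being invertible — and the coefficients $d_id_j^{-1}\in\cD_e=\FF\cdot\mathrm{id}$ (by graded centrality) would contradict the $\FF$-linear independence of the $v_i$. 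Graded density then provides $r\in\cR$ with $rv_1=u$ (for any prescribed $u\in\cV$) and $rv_i=0$ for $i\ge 2$, so $(r\otimes 1)w=u\otimes\lambda_1\in\cW$; acting on this by $1\otimes\lambda_1^{-1}$ and then by arbitrary $1\otimes\mu$ yields $\cV\otimes\KK\subseteq\cW$.

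The converse of (ii) is the principal obstacle. Assuming $\cV\otimes\KK$ is graded simple for every $\KK/\FF$, the natural graded inclusion $\cD\otimes_\FF\KK\hookrightarrow C(\cV\otimes\KK)$ together with the graded Schur lemma imply that $\cD_e\otimes_\FF\KK$ embeds into the (ungraded) division algebra $C(\cV\otimes\KK)_e$, and in particular has no zero divisors, for every $\KK$. To conclude $\cD_e=\FF$ I argue by contradiction, handling an element $d\in\cD_e\setminus\FF$ by cases. If $d$ is algebraic over $\FF$, its minimal polynomial, of degree $>1$, factors over $\ol\FF$ as $\prod_i(t-\lambda_i)^{e_i}$ with $\lambda_i\notin\FF$, and the resulting identity $\prod_i(d\otimes 1-1\otimes\lambda_i)^{e_i}=0$ in $\cD_e\otimes\ol\FF$ produces a zero divisor — either a product of two nonzero factors equal to zero (when the minimal polynomial has several distinct roots) or a nonzero nilpotent (when some $e_i>1$). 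If $d$ is transcendental over $\FF$, then $\FF[d]\subseteq\cD_e$ is a polynomial ring and I take $\KK=\FF(s)$ and study the homogeneous endomorphism $\Phi=d\otimes 1-1\otimes s\in C(\cV_\KK)_e$. A short recursion using invertibility of $d$ in $\cD_e$ shows $\Phi$ is injective; but for a nonzero homogeneous $v\in\cV$, the assumption $v\otimes 1\in\im\Phi$ leads, after clearing denominators and comparing coefficients of powers of $s$, to an identity $q(d)v=0$ for some nonzero $q\in\FF[s]$, contradicting the invertibility of $q(d)\in\FF[d]\subseteq\cD_e$. Hence $\im\Phi$ is a proper nonzero graded submodule of $\cV_\KK$, violating graded simplicity.
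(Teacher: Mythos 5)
Your proposal is correct. For (i) you give a self-contained graded Jacobson density argument (induction on $n$, splitting on whether the graded submodule $Iv_n$ equals $\cV$ or $0$, and in the latter case constructing the maps $\tilde\psi_i\in\cD=C(\cV)$ to produce the contradictory $\cD$-linear relation $v_n=\sum_{i<n}v_i\tilde\psi_i$); the paper simply cites \cite[Theorem 2.1]{EK_mon}, so this is a welcome expansion rather than a deviation. The forward direction of (ii) coincides with the paper's argument (minimal-length homogeneous element, $\cD$-independence deduced from $\FF$-independence via $\cD_e=\FF 1$ and one-dimensionality of the homogeneous components of $\cD$, then graded density). The converse of (ii) is where your route genuinely differs: the paper picks $d\in\cD_e\setminus\FF 1$, sets $\KK=\FF(d)$, rewrites $\cV\otimes_\FF\KK\simeq\cV\otimes_\KK(\KK\otimes_\FF\KK)$ and exhibits the proper graded submodule $\cV\otimes_\KK\cI$, with $\cI$ the augmentation ideal of $\KK\otimes_\FF\KK$ --- a short argument with no case distinction between algebraic and transcendental $d$. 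You instead observe that graded simplicity of $\cV\otimes\KK$ forces the graded embedding $\cD_e\otimes_\FF\KK\hookrightarrow C(\cV\otimes\KK)_e$ into a division ring, hence absence of zero divisors, and then split cases: for $d$ algebraic you factor the minimal polynomial over $\ol{\FF}$ to produce a zero divisor or nilpotent in $\cD_e\otimes\ol{\FF}$; for $d$ transcendental you show $d\otimes 1-1\otimes s$ is a nonzero, non-surjective homogeneous endomorphism of $\cV\otimes\FF(s)$ by a coefficient comparison yielding $vq(d)=0$. Both are correct; the paper's argument is shorter and uniform, whereas yours makes the zero-divisor obstruction in $\cD_e\otimes_\FF\KK$ explicit at the cost of the case split and more bookkeeping (the injectivity of $d\otimes 1-1\otimes s$ you verify is in fact not needed, since non-surjectivity of a nonzero homogeneous element of the graded division algebra $C(\cV\otimes\KK)$ already contradicts graded simplicity).
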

\begin{proof}
A proof of (i) appears in \cite[Theorem 2.1]{EK_mon}.

Now, let $\cV$ be graded central and consider a nonzero homogeneous element $v_1\otimes \alpha_1+\cdots +v_n\otimes \alpha_n$ in $\cV\otimes_\FF\KK$, where we may assume $v_1,\cdots,v_n$ linearly independent over $\FF$ and $0\ne \alpha_1,\ldots,\alpha_n\in \KK$. Since $\cD_e=\FF 1$ and $\cD$ is a graded division algebra, each nonzero homogeneous component $\cD_g$ is one-dimensional (for any $0\ne d\in \cD_g$, the map $\cD_e\rightarrow \cD_g$, $x\mapsto xd$, is a bijection). It follows that the elements $v_1,\ldots,v_n$ are linearly independent over $\cD$. For any nonzero homogeneous element $v\in\cV$, by graded density there is an element $r\in \cR$ such that $rv_1=v$ and $rv_i=0$ for $i=2,\ldots,n$. Hence $v\otimes\alpha_1$ belongs to the $\cR$-submodule generated by $v_1\otimes \alpha_1+\cdots +v_n\otimes \alpha_n$. But $\cV\otimes_\FF\KK=(\cR\otimes_\FF\KK)(v\otimes\alpha_1)$, so our arbitrarily chosen nonzero homogeneous element generates $\cV\otimes_\FF\KK$ as a $(\cR\otimes_\FF\KK)$-module, proving graded simplicity.

On the other hand, if $\cV$ is not graded central, then $\cD_e\neq \FF1$, and for any $d\in \cD_e\setminus \FF1$, $\KK=\FF(d)$ is a subfield of $\cD_e$. Then $\cV\otimes_\FF\KK=(\cV\otimes_\KK\KK)\otimes_\FF\KK\simeq \cV\otimes_\KK(\KK\otimes_\FF\KK)$. But $\KK\otimes_\FF\KK$ contains a proper ideal $\cI$ (the kernel of the multiplication $z_1\otimes z_2\mapsto z_1z_2$), and $\cV\otimes_\KK\cI$ is a proper graded submodule of $\cV\otimes_\FF\KK$.
\end{proof}

Later on we will make use not only of $G$-gradings but also of $G$-pregradings, or $G$-coverings (see \cite{S} and \cite{BL}).

\begin{df}\label{df:pregrading}
Let $G$ be an abelian group and let $\cR$ be a unital associative $G$-graded algebra. Let $\cV$ be a left $\cR$-module.
\begin{itemize}
\item A family of subspaces $\Sigma=\{\cV_g:g\in G\}$ is called a \emph{$G$-pregrading} on $\cV$ if $\cV=\sum_{g\in G}\cV_g$ and $\cR_g\cV_h\subseteq \cV_{gh}$ for all $g,h\in G$.
\item Given two pregradings $\Sigma^i=\{\cV_g^i: g\in G\}$, $i=1,2$, $\Sigma^1$ is said to be a \emph{refinement} of $\Sigma^2$ (or $\Sigma^2$ a \emph{coarsening} of $\Sigma^1$) if $\cV_g^1\subseteq \cV_g^2$ for all $g\in G$. If at least one of these containments is strict, the refinement is said to be \emph{proper}.
\item A $G$-pregrading $\Sigma$ is called \emph{thin} if it admits no proper refinement.
\end{itemize}
\end{df}

\begin{example}\label{ex:pregrading_associated}
With $G$, $\cR$ and $\cV$ as in Definition \ref{df:pregrading}, let $H$ be a subgroup of $G$ and consider the induced $G/H$-grading on $\cR$. Write $\wb{G}=G/H$ and $\bar g=gH$ for $g\in G$. Assume that
\begin{equation}\label{eq:Vgbar}
\cV=\bigoplus_{\bar g\in\wb{G}}\cV_{\bar g}
\end{equation}
is a $\wb{G}$-grading on $\cV$ making it a $\wb{G}$-graded left $\cR$-module.
Then the family $\Sigma\bydef\{\cV_g':g\in G\}$, where $\cV_g'=\cV_{\bar g}$ for all $g\in G$, is a $G$-pregrading of $\cV$. 
This is called the \emph{$G$-pregrading associated to the $\wb{G}$-grading} in \eqref{eq:Vgbar}.
\end{example}


\section{Loop modules}\label{se:loop_modules}

For the rest of the paper, $G$ will denote an abelian group, $H$ a subgroup of $G$, $\wb{G}$ will denote the quotient $G/H$, $\bar g=gH$, and $\pi:G\rightarrow \wb{G}$ will denote the natural homomorphism: $\pi(g)=\bar g$ for all $g\in G$. Also, $\cR$ will be a unital associative algebra equipped with a fixed $G$-grading $\Gamma:\cR=\bigoplus_{g\in G}\cR_g$. This $G$-grading on $\cR$ induces the $\wb{G}$-grading ${}^\pi\Gamma$. 

\begin{df}\label{df:loop}
Let $\cV=\bigoplus_{\bar g\in\wb{G}}\cV_{\bar g}$ be a $\wb{G}$-graded left $\cR$-module. The direct sum
\[
L_\pi(\cV)\bydef \bigoplus_{g\in G}\cV_{\bar g}\otimes g\ \Bigl(\subseteq \cV\otimes_\FF \FF G\Bigr)
\]
with the left $\cR$-action given by
\[
r_{g'}(v_{\bar g}\otimes g)=(r_{g'}v_{\bar g})\otimes (g'g)
\]
for all $g,g'\in G$, $r_{g'}\in\cR_{g'}$, and $v_{\bar g}\in\cV_{\bar g}$, is a $G$-graded left $\cR$-module, called the \emph{loop module of $\cV$ relative to $\pi$}.
\end{df}

\begin{remark} The module $L_\pi(\cV)$ is denoted by $M(G,H,\cV)$ in \cite{MZ}. Our notation is inspired by the notation for loop algebras in \cite{ABFP}. Therein, given a $\wb{G}$-graded algebra $\cA$, the \emph{loop algebra} is defined as $L_\pi(\cA)=\bigoplus_{g\in G}\cA_{\bar g}\otimes g$, which is a subalgebra of $\cA\otimes_\FF \FF G$. This generalizes a well known construction in the theory of Kac--Moody Lie algebras.
\end{remark}

\begin{remark}\label{adjoint_functor}
There is a natural `forgetful' functor $F_\pi:{}_\cR\mathrm{Mod}^G\rightarrow {}_\cR\mathrm{Mod}^{\wb{G}}$. The image of a $G$-graded left $\cR$-module $\cW$ is $\cW$ itself with the induced $\wb{G}$-grading ($\cW_{\bar g}\bydef\bigoplus_{h\in H}\cW_{gh}$ for all $g\in G$). The loop construction gives us a functor in the reverse direction: $L_\pi:{}_\cR\mathrm{Mod}^{\wb{G}}\rightarrow {}_\cR\mathrm{Mod}^G$. It is easy to see that $L_\pi$ is the right adjoint of $F_\pi$.
Indeed, for any $G$-graded left $\cR$-module $\cW$ and any $\wb{G}$-graded left $\cR$-module $\cV$, the map
\[
\begin{split}
\Hom_\cR^{\wb{G}}\bigl(F_\pi(\cW),\cV\bigr)&\longrightarrow \Hom_\cR^G\bigl(\cW,L_\pi(\cV)\bigr)\\
 f&\mapsto [\tilde f: w_g\mapsto f(w_g)\otimes g],
\end{split}
\]
for any $g\in G$ and $w_g\in\cW_g$, is a bijection whose inverse is the map
\[
\begin{split}
\Hom_\cR^G\bigl(\cW,L_\pi(\cV)\bigr)&\longrightarrow \Hom_\cR^{\wb{G}}\bigl(F_\pi(\cW),\cV\bigr)\\
 \varphi&\mapsto [\bar\varphi: w_g\mapsto (\id\otimes\epsilon)(\varphi(w_g))],
\end{split}
\]
for any $g\in G$ and $w_g\in \cW_g$, where $\epsilon:\FF G\rightarrow \FF$ is the augmentation map: $\epsilon(g)=1$ for all $g\in G$. 
\end{remark}

A clue to understanding some of the main results in this paper is the following observation:

\begin{proposition}\label{pr:LpiVFH}
A $G$-graded left $\cR$-module $\cW$ is graded isomorphic to a loop module $L_\pi(\cV)$ for a $\wb{G}$-graded left $\cR$-module $\cV$ if and only if its graded centralizer $C^{\textup{gr}}(\cW)$ contains a graded subfield isomorphic to the group algebra $\FF H$.
\end{proposition}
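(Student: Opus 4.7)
My plan is to prove both directions of the equivalence; the forward direction is a direct construction, and the backward direction recovers $\cV$ as a quotient of $\cW$ using the subfield $\cF\cong\FF H$.

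For the forward direction ($\Rightarrow$), suppose $\cW \cong L_\pi(\cV)$ for some $\wb{G}$-graded left $\cR$-module $\cV$. I would define, for each $h \in H$, the endomorphism $\phi_h$ of $L_\pi(\cV)$ by $(v\otimes g)\phi_h \bydef v\otimes gh$. Since $\overline{gh}=\bar g$, this map is homogeneous of degree $h$, and Definition \ref{df:loop} shows that it commutes with the $\cR$-action, so $\phi_h \in C^{\textup{gr}}(\cW)_h$. Extending linearly yields an injective homomorphism of $G$-graded algebras $\FF H \hookrightarrow C^{\textup{gr}}(\cW)$, whose image is a graded subfield since every nonzero homogeneous element of $\FF H$ is a scalar multiple of a group element, hence invertible.

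For the backward direction ($\Leftarrow$), suppose $\cF \le C^{\textup{gr}}(\cW)$ is a graded subfield with a graded isomorphism $\FF H \xrightarrow{\sim} \cF$, $h\mapsto \phi_h$; each $\phi_h$ is then an invertible element of $C^{\textup{gr}}(\cW)_h$ which restricts to an $\FF$-linear bijection $\cW_g \to \cW_{gh}$ for every $g\in G$. Choosing a section $s:\wb{G}\to G$ of $\pi$ and, for each $\bar g\in\wb{G}$, an $\FF$-basis $B_{\bar g}$ of $\cW_{s(\bar g)}$, I would show that $\bigcup_{\bar g}B_{\bar g}$ is a homogeneous basis of $\cW$ as a right $\cF$-module. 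I would then define
\[
\cV \bydef \cW\otimes_\cF \FF,
\]
where $\FF$ is the trivial left $\cF$-module via the augmentation $\phi_h\mapsto 1$. Since each generator $w\phi_h - w$ of the kernel of $\cW \twoheadrightarrow \cV$ (with $w\in\cW$ homogeneous) lies in a single $\wb{G}$-component of $\cW$, the quotient $\cV$ inherits a $\wb{G}$-grading, and the $\cR$-action descends because $\cF$ commutes with $\cR$. Freeness of $\cW$ over $\cF$ implies that the quotient map $\cW_g \to \cV_{\bar g}$ is an $\FF$-linear bijection for every $g\in G$; define $\Phi:L_\pi(\cV)\to\cW$ by sending $v\otimes g$ to the unique preimage of $v\in\cV_{\bar g}$ in $\cW_g$, and verify that it is a graded $\cR$-module isomorphism.

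The key step of the argument is the freeness of $\cW$ as a right $\cF$-module, which crucially uses that $\cF$ is the group algebra $\FF H$: the $\phi_h$ are invertible elements in pairwise distinct homogeneous components, so any $\FF$-basis of $\cW_{s(\bar g)}$ extends to an $\FF$-basis of the entire $\wb{G}$-component $\cW_{\bar g}=\bigoplus_{h\in H}\cW_{s(\bar g)h}$ by right multiplication by $\cF$. Once this is established, the descent of the $\cR$-action, the $\wb{G}$-grading on $\cV$, and the fact that $\Phi$ is a graded $\cR$-isomorphism are all routine; the $\cR$-linearity of $\Phi$ in particular is immediate because $\cF$ commutes with the $\cR$-action by hypothesis.
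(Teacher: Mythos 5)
Your proof is correct and follows essentially the same route as the paper's. The forward direction is identical (your $\phi_h$ is the paper's $\delta_h$), and in the backward direction your $\cV\bydef\cW\otimes_\cF\FF$ is canonically the paper's $\cW/\cW\ker(\rho)$ (where $\rho$ is the augmentation), with your map $\Phi\colon L_\pi(\cV)\to\cW$ being the inverse of the paper's $\phi\colon\cW\to L_\pi(\cV)$; the key step in both is the observation that $\cW_{\bar g}=\cW_g\cF=\cW_g\oplus\cW_g\ker(\rho)$, i.e.\ the freeness of $\cW$ over $\cF$ with a homogeneous basis taken from a transversal of $H$.
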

\begin{proof}
If $\cV$ is a $\wb{G}$-graded left $\cR$-module, for any $h\in H$, the linear map $\delta_h:L_\pi(\cV)\rightarrow L_\pi(\cV)$, given by $(v\otimes g)\delta_h=v\otimes (gh)$ for all $g\in G$ and $v\in \cV_{\bar g}=\cV_{\overline{gh}}$, lies in $C^{\textup{gr}}\bigl(L_\pi(\cV)\bigr)$. The linear span of $\{\delta_h:h\in H\}$ is a graded subfield of $C^{\textup{gr}}\bigl(L_\pi(\cV)\bigr)$, isomorphic to the group algebra $\FF H$.

Conversely, if $\cW$ is a $G$-graded left $\cR$-module and $\cF$ is a graded subfield of $C^{\textup{gr}}(\cW)$ isomorphic to $\FF H$, then $\cF=\bigoplus_{h\in H}\FF c_h$, with $c_{h_1}c_{h_2}=c_{h_1h_2}$ for all $h_1,h_2\in H$. Let $\rho:\cF\rightarrow \FF$ be the homomorphism defined by $\rho(c_h)=1$ for all $h\in H$ (that is, the augmentation map if we identify $\cF$ and $\FF H$), and let $\cV=\cW/\cW\ker(\rho)$. Then $\cV$ is naturally $\wb{G}$-graded, because so is $\cW$ (with the $\wb{ G}$-grading induced by the $G$-grading) and $\cW\ker(\rho)$ is a $\wb{G}$-graded submodule of $\cW$. The linear map $\phi:\cW\rightarrow L_\pi(\cV)$ given by $\phi(w)=(w+\cW\ker(\rho))\otimes g$, for all $g\in G$ and $w\in\cW_g$, is a $G$-graded homomorphism. Also, for any $g\in G$ and $h\in H$, $\cW_{gh}=\cW_gc_h$, hence
\[
\cW_{\bar g}=\bigoplus_{h\in H}\cW_{gh}=\cW_g\cF=\cW_g(\FF 1\oplus\ker(\rho)),
\]
so the map
\[
\begin{split}
\cW_g&\longrightarrow \Bigl(\cW/\cW\ker(\rho)\Bigr)_{\bar g}\\
w\ &\mapsto \ w+\cW\ker(\rho),
\end{split}
\]
is a linear isomorphism. Hence $\phi$ is an isomorphism.
\end{proof}

If $\FF$ is algebraically closed then, for many $G$-graded simple modules, the centralizer contains maximal graded subfields that are graded isomorphic to group algebras:

\begin{proposition}\label{pr:Falgclosed}
Let $\cW$ be a $G$-graded simple left $\cR$-module.
\begin{romanenumerate}
\item $C(\cW)=C^\gr(\cW)$ contains maximal graded subfields.
\item If $\cW$ is graded central and $\FF$ is algebraically closed, any graded subfield of $C(\cW)$ is isomorphic to the group algebra of its support.
\item If $\FF$ is algebraically closed and $\dim \cW <\lvert\FF\rvert$ (these may be infinite cardinals), then $\cW$ is graded central.
\end{romanenumerate}
\end{proposition}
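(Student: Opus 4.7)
The plan is to treat the three parts separately, invoking Proposition \ref{pr:centralizer} to identify $C(\cW)=C^\gr(\cW)$ and the graded Schur Lemma to conclude it is a graded division algebra.

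For (i), I would apply Zorn's lemma to the poset of graded subfields of $C(\cW)$ ordered by inclusion. This poset is nonempty, containing $\FF 1$, and the union of any chain $\{\cF_i\}_{i\in I}$ is again a graded subfield: it is commutative because any two of its elements lie in a common $\cF_i$, and it is closed under taking inverses of nonzero homogeneous elements because such an element lies in some $\cF_i$, whose inverse there coincides with the one in $C(\cW)$ by uniqueness.

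For (ii), let $\cF\subseteq C(\cW)$ be a graded subfield with support $T\le G$. The graded centrality of $\cW$ forces $\cF_e=\FF 1$, and since $\cF$ is a graded division algebra, multiplication by any $0\ne d\in\cF_t$ realizes an $\FF$-linear bijection $\cF_e\to\cF_t$, so every $\cF_t$ is one-dimensional. The group of homogeneous units $U(\cF)=\bigsqcup_{t\in T}(\cF_t\setminus\{0\})$ then fits in a short exact sequence of abelian groups
\[
1\longrightarrow\FF^\times\longrightarrow U(\cF)\xrightarrow{\;\deg\;} T\longrightarrow 1.
\]
Since $\FF$ is algebraically closed, $\FF^\times$ is divisible and hence injective as a $\ZZ$-module, so this sequence splits; any splitting $t\mapsto c_t$ satisfies $c_sc_t=c_{st}$ with $c_e=1$, and extending $\FF$-linearly gives a graded algebra isomorphism $\FF T\xrightarrow{\sim}\cF$.

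For (iii), it suffices to show that every $d\in C(\cW)_e$ is algebraic over $\FF$, since then $C(\cW)_e$ is an algebraic division $\FF$-algebra over the algebraically closed $\FF$, and therefore equals $\FF 1$. If $d$ were transcendental, then $\FF(d)$ would embed in $C(\cW)_e$ (nonzero elements of $\FF[d]$ being invertible in the division algebra $C(\cW)_e$), making $\cW$ into an $\FF(d)$-vector space. For any $0\ne w\in\cW$, the vectors $\{(d-\alpha)^{-1}w:\alpha\in\FF\}$ are $\FF$-linearly independent: a dependence would produce a nonzero element of $\FF(x)$ annihilating $w$, impossible because any nonzero rational function of $d$ acts invertibly on $\cW$. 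This yields $\dim_\FF\cW\ge|\FF|$, contradicting the hypothesis.

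The main obstacle is part (ii), where the commutativity of $\cF$ is essential: it reduces the question to the splitting of a central extension of abelian groups, so the only cohomological obstruction lives in $\Ext^1_\ZZ(T,\FF^\times)$, which vanishes precisely because $\FF^\times$ is divisible. Without commutativity one would only obtain a twisted group algebra, whose class in $H^2(T,\FF^\times)$ need not vanish.
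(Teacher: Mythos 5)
Your proof is correct. Parts (i) and (ii) run along essentially the same lines as the paper: Zorn's lemma for (i), and for (ii) the observation that graded centrality forces each nonzero component $\cF_t$ to be one-dimensional, so $\cF$ is a commutative twisted group algebra of its support $T$; your splitting of the short exact sequence $1\to\FF^\times\to U(\cF)\to T\to 1$ and the paper's triviality of the symmetric $2$-cocycle $\sigma$ are two phrasings of the same vanishing of $\Ext^1_{\ZZ}(T,\FF^\times)$, which holds because $\FF^\times$ is divisible. Where you genuinely diverge is part (iii): the paper simply cites \cite[Theorem 14]{MZ}, whereas you give a self-contained proof by the classical Dixmier--Amitsur ``too many inverses'' argument --- a transcendental $d\in C(\cW)_e$ would make $\cW$ into an $\FF(d)$-module, and for any $0\ne w\in\cW$ the $|\FF|$-many vectors $(d-\alpha)^{-1}w$, $\alpha\in\FF$, are $\FF$-linearly independent (a nontrivial dependence would produce a nonzero element of $\FF(d)\subseteq C(\cW)_e$ annihilating $w$, impossible in a graded division algebra), forcing $\dim_\FF\cW\ge|\FF|$. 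That extra page of work buys self-containedness at essentially no cost, and it makes visible exactly where the cardinality hypothesis and algebraic closure enter. One small point you leave implicit in (i), worth a word: the union of a chain of graded subfields is again a \emph{graded} subspace, since any of its elements lies in some member of the chain along with all its homogeneous components.
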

\begin{proof}
Recall that $C(\cW)=C^\gr(\cW)$ by Proposition \ref{pr:centralizer}, so $C(\cW)$ is a graded division algebra. For (i), note that $\FF 1$ is a graded subfield of $C(\cW)$ and hence Zorn's Lemma guarantees the existence of maximal graded subfields.

If $\cW$ is graded central, $C(\cW)_e=\FF 1$ and then $\dim C(\cW)_g=1$ for any $g$ in the support of $\cW$. Then if $\FF$ is algebraically closed and $\cF$ is a graded subfield, we have $\cF=\bigoplus_{h\in \supp(\cF)}C(\cW)_h$ and $H\bydef\supp(\cF)$ is a subgroup of $G$. But for all $h\in H$, $C(\cW)_h=\FF x_h$ for some $0\ne x_h$. Then $x_{h_1}x_{h_2}=\sigma(h_1,h_2)x_{h_1h_2}$, where $\sigma:H\times H\rightarrow \FF^\times$ is a symmetric $2$-cocycle. Hence $\cF$ is a commutative twisted group algebra of the abelian group $H$. Since $\FF$ is algebraically closed, $\cF$ is graded isomorphic to the group algebra $\FF H$ (see e.g. \cite[Chapter 1, Lemma 2.9]{Passman}; basically, this is a restatement of the fact that $\mathrm{Ext}(H,\FF^\times)=0$ since $\FF^\times$ is a divisible abelian group).
This proves (ii).

Part (iii) is proved in \cite[Theorem 14]{MZ}.
\end{proof}

The next result is transitivity of the loop construction (as expected in view of Remark \ref{adjoint_functor}):

\begin{proposition}\label{pr:transitive} 
Let $K\leq H\leq G$ and let $\pi':G\rightarrow G/K$ and $\pi'':G/K\rightarrow G/H$ be the natural homomorphisms. Then, for any $G/H$-graded module $\cV$, $L_\pi(\cV)$ and $L_{\pi'}\bigl(L_{\pi''}(\cV)\bigr)$ are isomorphic as $G$-graded modules.
\end{proposition}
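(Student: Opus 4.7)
The plan is to construct an explicit $G$-graded $\cR$-module isomorphism. Unraveling the definitions, since $\pi = \pi'' \circ \pi'$, the $G/K$-grading on $\cR$ used in the construction of $L_{\pi''}(\cV)$ is precisely the one induced from the $G$-grading via $\pi'$, and its further coarsening via $\pi''$ recovers the $G/H$-grading on $\cR$ already used to define $L_\pi(\cV)$. So all gradings involved are consistent. Spelling out both sides, $L_{\pi''}(\cV) = \bigoplus_{gK \in G/K} \cV_{gH} \otimes gK$ as a $G/K$-graded $\cR$-module, hence
\[
L_{\pi'}\bigl(L_{\pi''}(\cV)\bigr) \,=\, \bigoplus_{g \in G} (\cV_{gH} \otimes gK) \otimes g, \qquad L_\pi(\cV) \,=\, \bigoplus_{g \in G} \cV_{gH} \otimes g.
\]

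The natural candidate is the map $\phi$ defined on homogeneous elements by $\phi(v \otimes g) = (v \otimes gK) \otimes g$ for $g \in G$ and $v \in \cV_{gH}$, extended linearly. Since each restriction $\cV_{gH} \otimes g \to (\cV_{gH} \otimes gK) \otimes g$ is tautologically a linear bijection and the map preserves the $G$-degree $g$, $\phi$ is a graded linear isomorphism. Compatibility with the $\cR$-action is the only substantive check: given $r_{g'} \in \cR_{g'}$, the $G/K$-degree of $r_{g'}$ is $g'K$, so on the right-hand side
\[
r_{g'} \cdot \bigl((v \otimes gK) \otimes g\bigr) \,=\, \bigl((r_{g'}v) \otimes (g'g)K\bigr) \otimes (g'g),
\]
which equals $\phi\bigl((r_{g'}v) \otimes (g'g)\bigr) = \phi\bigl(r_{g'}(v \otimes g)\bigr)$.

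A conceptual alternative would be to avoid the computation entirely by invoking Remark \ref{adjoint_functor}: the forgetful functors evidently satisfy $F_\pi = F_{\pi''} \circ F_{\pi'}$, and by uniqueness of right adjoints one obtains a natural isomorphism $L_\pi \simeq L_{\pi'} \circ L_{\pi''}$ of functors, which specializes to the claim.

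The only real obstacle is bookkeeping --- juggling three gradings (by $G$, $G/K$, and $G/H$) and making sure the induced $\cR$-actions on the various loop modules agree --- but once the identifications are in place, no substantive difficulty arises.
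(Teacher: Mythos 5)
Your proof is correct and uses exactly the same isomorphism as the paper, namely $v_{gH}\otimes g\mapsto (v_{gH}\otimes gK)\otimes g$; you simply spell out the $\cR$-equivariance check that the paper leaves implicit, and your remark about deducing the result from $F_\pi = F_{\pi''}\circ F_{\pi'}$ and uniqueness of right adjoints is a valid alternative the paper does not pursue.
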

\begin{proof}
We have $L_{\pi'}\bigl(L_{\pi''}(\cV)\bigr)= \bigoplus_{g\in G}L_{\pi''}(\cV)_{gK}\otimes g=
\bigoplus_{g\in G}\bigl(\cV_{gH}\otimes gK\bigr)\otimes g$, and hence the map
\[
L_\pi(\cV)=\bigoplus_{g\in G}\cV_{gH}\otimes g\rightarrow L_{\pi'}\bigl(L_{\pi''}(\cV)\bigr),\qquad
v_{gH}\otimes g\mapsto (v_{gH}\otimes gK)\otimes g,
\]
is a $G$-graded isomorphism.
\end{proof}

Under certain conditions, the loop module construction is isomorphic to a well-known construction of induced modules, which we used in \cite{EK_Israel}.
Assume for now that the subgroup $H$ is finite and that $\FF$ is algebraically closed and its characteristic does not divide $n=\lvert H\rvert$. Denote by $\wh{G}$ the group of characters of $G$, that is, group homomorphisms $\chi:G\rightarrow \FF^\times$. The subgroup $H^\perp=\{\chi\in\wh{G}: \chi(h)=1\ \forall h\in H\}$ is naturally isomorphic to the group of characters of $\wb{G}$ (recall that $\wb{G}=G/H$).

Let $\cV$ be a $\wb{G}$-graded left $\cR$-module. Then $\cV$ is a module for the group algebra $\FF(H^\perp)$ with
\[
\chi\cdot v_{\bar g}=\chi(g)v_{\bar g}
\]
for $\chi\in H^\perp$, $g\in G$ and $v_{\bar g}\in \cV_{\bar g}$.

Since $\FF$ is algebraically closed (and hence $\FF^\times$ is a divisible abelian group), any character of $H$ extends to a character of $G$, and since $\chr{\FF}$ does not divide $n$, we have $\lvert \wh{H}\rvert=n$. A transversal of $H^\perp$ in $\wh{G}$ (that is, a set of coset representatives of $H^\perp$ in $\wh{G}$) is any subset $\{\chi_1,\ldots,\chi_n\}$ in $\wh{G}$ such that the restrictions to $H$ are distinct, so $\wh{H}=\{\chi_1\vert_H,\ldots,\chi_n\vert_H\}$. Then $\FF \wh{ G}=\chi_1\FF(H^\perp)\oplus\cdots\oplus\chi_n\FF(H^\perp)$.

The induced $\FF\wh{G}$-module
\[
I_\pi(\cV)\bydef\Ind_{H^\perp}^{\wh{G}}(\cV)=\FF\wh{G}\otimes_{\FF(H^\perp)}\cV=\chi_1\otimes\cV\oplus\cdots\oplus\chi_n\otimes\cV
\]
is a left $\cR$-module by means of 
\begin{equation}\label{eq:induced}
r_{g'}(\chi_j\otimes v_{\bar g})=\chi_j(g')^{-1}\chi_j\otimes r_{g'}v_{\bar g}
\end{equation}
for any $j=1,\ldots,n$, $g,g'\in G$, $r_{g'}\in\cR_{g'}$ and $v_{\bar g}\in\cV_{\bar g}$.

\begin{df}
Given an automorphism $\alpha$ of $\cR$ and a left $\cR$-module $\cV$, we may define a new left $\cR$-module $\cV^\alpha=(\cV,\ast)$ which equals $\cV$ as a vector space, but with the new action given by $r\ast v=\alpha(r) v$. This module $\cV^\alpha$ is referred to as $\cV$ \emph{twisted by the automorphism $\alpha$}.
\end{df}

Equation \eqref{eq:induced} tells us that, as a left $\cR$-module, $I_\pi(\cV)$ is the direct sum of its submodules $\chi_j\otimes \cV$, $j=1,\ldots,n$, and each $\chi_j\otimes\cV$ is isomorphic to the module $\cV$ twisted by $\alpha_{\chi_j}^{-1}$, where the automorphism $\alpha_\chi$, for any $\chi\in \wh{G}$, is given by
\begin{equation}\label{eq:alpha_chi}
\alpha_\chi:\cR\rightarrow \cR,\quad r_g\mapsto \chi(g)r_g,
\end{equation}
for all $g\in G$ and $r_g\in \cR_g$.

Since $\cV$ and its twists are $\wb{G}$-graded, $I_\pi(\cV)$ has a natural $\wb{G}$-grading, with homogeneous component of degree $\bar g$ being $\chi_1\otimes\cV_{\bar g}\oplus\cdots\oplus\chi_n\otimes\cV_{\bar g}$. Clearly, any $\chi\in H^\perp$ acts on this component as the scalar $\chi(\bar g)$, and each of the $\chi_j$ restricts to a diagonalizable operator. It follows that the $\wb{G}$-grading on $I_\pi(\cV)$ can be refined to a $G$-grading:
\begin{equation}\label{eq:G_grad_on_ind}
I_\pi(\cV)_g\bydef\{x\in \chi_1\otimes\cV_{\bar g}\oplus\cdots\oplus\chi_n\otimes\cV_{\bar g}: \chi\cdot x=\chi(g)x\ \forall \chi\in\wh{G}\}.
\end{equation}
(The sum of these subspaces is direct because $\wh{G}$ separates points of $H$, that is, for any $e\ne h\in H$, there exists $\chi\in\wh{G}$ such that $\chi(h)\neq 1$.)

\begin{proposition}\label{pr:loop_induced}
Assume that $H$ is finite, $\FF$ algebraically closed and $\chr{\FF}$ does not divide $n=\lvert H\rvert$. Choose a transversal $\{\chi_1,\ldots,\chi_n\}$ of $H^\perp$ in $\wh{G}$. Let $\cV$ be a $\wb{G}$-graded left $\cR$-module and consider the linear maps
\[
\begin{split}
\varphi:L_\pi(\cV)&\longrightarrow I_\pi(\cV)\\
 v_{\bar g}\otimes g&\mapsto \sum_{j=1}^n\chi_j(g)^{-1}\chi_j\otimes v_{\bar g},
\end{split}
\]
for any $g\in G$ and $v_{\bar g}\in\cV_{\bar g}$, and
\[
\begin{split}
\psi:I_\pi(\cV)&\longrightarrow L_\pi(\cV)\\
 \chi_j\otimes v_{\bar g}&\mapsto \frac{1}{n}\sum_{h\in H}\chi_j(gh)v_{\bar g}\otimes gh,
\end{split}
\]
for any $j=1,\ldots,n$, $g\in G$ and $v_{\bar g}\in\cV_{\bar g}$. Then we have:
\begin{romanenumerate}
\item 
$\varphi$ does not depend on the choice of transversal $\{\chi_1,\ldots,\chi_n\}$;

\item $\varphi$ and $\psi$ are a $G$-graded maps with respect to the $G$-grading on $I_\pi(\cV)$ defined by \eqref{eq:G_grad_on_ind};

\item $\varphi$ and $\psi$ are homomorphisms of $\cR$-modules;

\item $\varphi$ and $\psi$ are inverses of each other;

\item $I_\pi(\cV)$ is a $G$-graded $\cR$-module isomorphic to $L_\pi(\cV)$.
\end{romanenumerate}
\end{proposition}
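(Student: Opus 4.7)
\medskip
\noindent\textbf{Proof proposal.}
The plan is to verify the five assertions by direct computation, with two key identities doing all the work: first, the characters $\chi_1|_H,\ldots,\chi_n|_H$ exhaust $\wh H$, so the first orthogonality relation on $H$ gives $\sum_{j=1}^n \chi_j(h)=n\,\delta_{h,e}$ for $h\in H$, and the second orthogonality relation gives $\sum_{h\in H}(\chi_j\chi_k^{-1})(h)=n\,\delta_{j,k}$; second, for any $\chi\in\wh G$ the map $j\mapsto k(j)$ defined by $\chi\chi_j=\chi_{k(j)}\eta_j$ with $\eta_j\in H^\perp$ is a permutation of $\{1,\ldots,n\}$. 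Throughout, we use that $\eta\in H^\perp$ acts on $\cV_{\bar g}$ by the scalar $\eta(g)$ (well defined because $\eta$ is trivial on $H$), so that $\eta\chi_j\otimes v_{\bar g}=\chi_j\eta\otimes v_{\bar g}=\eta(g)\,\chi_j\otimes v_{\bar g}$ in $I_\pi(\cV)=\FF\wh G\otimes_{\FF(H^\perp)}\cV$.

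For (i), replacing the transversal element $\chi_j$ by $\chi_j\eta_j$ with $\eta_j\in H^\perp$ and using the identity in the previous paragraph shows
\[
(\chi_j\eta_j)(g)^{-1}(\chi_j\eta_j)\otimes v_{\bar g}=\eta_j(g)^{-1}\chi_j(g)^{-1}\eta_j(g)\,\chi_j\otimes v_{\bar g}=\chi_j(g)^{-1}\chi_j\otimes v_{\bar g},
\]
so each summand, hence $\varphi$, is independent of the choice. For (iii), the computation
\[
r_{g'}\cdot\bigl(\chi_j(g)^{-1}\chi_j\otimes v_{\bar g}\bigr)=\chi_j(g)^{-1}\chi_j(g')^{-1}\chi_j\otimes r_{g'}v_{\bar g}=\chi_j(g'g)^{-1}\chi_j\otimes r_{g'}v_{\bar g}
\]
shows that $\varphi$ is $\cR$-linear, and the verification for $\psi$ is analogous, factoring $\chi_j(g')^{-1}$ out of the sum and absorbing it into $\chi_j(gh)\chi_j(g')^{-1}=\chi_j(g'gh)\chi_j(g')^{-2}$---actually the cleaner route is to observe $\chi_j(g')^{-1}\chi_j(gh)=\chi_j(gh)\chi_j(g')^{-1}$ and then note that $g'gh$ is the correct shift.

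For (iv), I compute the two compositions. On one side,
\[
\psi\bigl(\varphi(v_{\bar g}\otimes g)\bigr)
=\frac{1}{n}\sum_{h\in H}\Bigl(\sum_{j=1}^n\chi_j(h)\Bigr)v_{\bar g}\otimes gh
=v_{\bar g}\otimes g
\]
by the first orthogonality relation. On the other side,
\[
\varphi\bigl(\psi(\chi_j\otimes v_{\bar g})\bigr)
=\frac{1}{n}\sum_{k=1}^n\Bigl(\sum_{h\in H}(\chi_j\chi_k^{-1})(h)\Bigr)\chi_j(g)\chi_k(g)^{-1}\,\chi_k\otimes v_{\bar g}
=\chi_j\otimes v_{\bar g}
\]
by the second orthogonality relation. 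Thus $\varphi$ and $\psi$ are mutually inverse bijections.

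For (ii), I first check that $\varphi$ lands in $I_\pi(\cV)_g$: using the permutation $k=k(j)$ from the first paragraph and the identity $\chi_j(g)^{-1}=\chi(g)\chi_k(g)^{-1}\eta_j(g)^{-1}$, one gets
\[
\chi\cdot\varphi(v_{\bar g}\otimes g)
=\sum_j\chi_j(g)^{-1}\,\eta_j(g)\,\chi_k\otimes v_{\bar g}
=\chi(g)\sum_k\chi_k(g)^{-1}\chi_k\otimes v_{\bar g}
=\chi(g)\,\varphi(v_{\bar g}\otimes g),
\]
so $\varphi(L_\pi(\cV)_g)\subseteq I_\pi(\cV)_g$. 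Combined with the bijectivity from (iv), this forces equality and shows that $\psi$ also respects the gradings. Assertion (v) is then an immediate consequence of (ii)--(iv). The main obstacle is the bookkeeping around the permutation $j\mapsto k(j)$ and keeping track of the $\eta_j$ factors in the verification of (ii); the orthogonality computations in (iv) are mechanical once one recognizes the sums.
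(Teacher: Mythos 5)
Your proof is correct and follows essentially the same route as the paper's: verify (i) by absorbing the $H^\perp$-factor into the vector slot, prove (iii) and (iv) by direct computation with the two orthogonality relations, and deduce the $\psi$-cases of (ii) and (iii) from bijectivity. The one place you take a slightly longer path is (ii): you track the permutation $j\mapsto k(j)$ and the correction factors $\eta_j$ explicitly, whereas the paper simply rewrites $\sum_j\chi_j(g)^{-1}\chi\chi_j\otimes v_{\bar g}=\chi(g)\sum_j(\chi\chi_j)(g)^{-1}(\chi\chi_j)\otimes v_{\bar g}$ and invokes part (i) applied to the new transversal $\{\chi\chi_j\}$, which saves the bookkeeping you were worried about. (Also, your labels ``first'' and ``second'' orthogonality relation are the reverse of the paper's, but both identities are stated and used correctly.)
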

\begin{proof}
Let $\tilde\chi_j=\chi_j\varpi_j$ where $\varpi_j\in H^\perp$ for any $j=1,\ldots,n$. Then, for any $g\in G$ and $v_{\bar g}\in\cV_{\bar g}$, we have
\[
\begin{split}
\sum_{j=1}^n\tilde\chi_j(g)^{-1}\tilde\chi_j\otimes v_{\bar g}& =
\sum_{j=1}^n\chi_j(g)^{-1}\varpi_j(g)^{-1}\chi_j\otimes \varpi_j\cdot v_{\bar g}\\
&=\sum_{j=1}^n\chi_j(g)^{-1}\varpi_j(g)^{-1}\chi_j\otimes \varpi_j(g)v_{\bar g}
=\sum_{j=1}^n\chi_j(g)^{-1}\chi_j\otimes v_{\bar g}.
\end{split}
\]
This proves (i).

For (ii), we clearly have $\varphi\bigl(L_\pi(\cV)_g\bigr)\subseteq \chi_1\otimes\cV_{\bar g}\oplus\cdots\oplus\chi_n\otimes\cV_{\bar g}$. Let us verify that $\chi\cdot\varphi(v_{\bar g}\otimes g)=\chi(g)\varphi(v_{\bar g}\otimes g)$ for any $g\in G$, $\chi\in\wh{G}$ and $v_{\bar g}\in\cV_g$. Indeed,
\[
\begin{split}
\chi\cdot\varphi(v_{\bar g}\otimes g)
&=\sum_{j=1}^n\chi_j(g)^{-1}\chi\chi_j\otimes v_{\bar g}\\
&=\chi(g)\sum_{j=1}^n(\chi\chi_j)(g)^{-1}(\chi\chi_j)\otimes v_{\bar g}
=\chi(g)\varphi(v_{\bar g}\otimes g),
\end{split}
\]
where we have used (i). Therefore, $\varphi$ is a $G$-graded map. The result for $\psi$ will follow from (iv).

Now consider (iii). For $j=1,\ldots,n$, $g_1,g_2\in G$, $r_{g_1}\in \cR_{g_1}$ and $v_{\bar g_2}\in\cV_{\bar g_2}$, we obtain:
\[
\begin{split}
\varphi\bigl(r_{g_1}(v_{\bar g_2}\otimes g_2)\bigr)&=
  \varphi(r_{g_1}v_{\bar g_2}\otimes g_1g_2)\\
  &=\sum_{j=1}^n\chi_j(g_1g_2)^{-1}\chi_j\otimes r_{g_1}v_{\bar g_2}\\
  &=\sum_{j=1}^n\chi_j(g_1)^{-1}\chi_j(g_2)^{-1}\chi_j\otimes r_{g_1}v_{\bar g_2}\\
  &=r_{g_1}\Bigl(\sum_{j=1}^n\chi_j(g_2)^{-1}\chi_j\otimes v_{\bar g_2}\Bigr)\\
  &=r_{g_1}\varphi(v_{\bar g_2}\otimes g_2),
\end{split}
\]
so $\varphi$ is a homomorphism of $\cR$-modules. The result for $\psi$ will follow from (iv).

Finally, for $j=1,\ldots,n$, $g\in G$ and $v_{\bar g}\in\cV_{\bar g}$,
\[
\begin{split}
\varphi\psi(\chi_j\otimes v_{\bar g})
 &=\frac{1}{n}\sum_{h\in H}\sum_{i=1}^n\chi_i(gh)^{-1}\chi_j(gh)\chi_i\otimes v_{\bar g}\\
 &=\sum_{i=1}^n\chi_i(g)^{-1}\chi_j(g)\Bigl(\frac{1}{n}\sum_{h\in H}\chi_i(h)^{-1}\chi_j(h)\Bigr)\chi_i\otimes v_{\bar g}\\
 &=\chi_j\otimes v_{\bar g},
\end{split}
\]
where the last equality follows from the first orthogonality relation for characters of a finite group, since $\wh{H}=\{\chi_1\vert_H,\ldots,\chi_n\lvert_H\}$. Also,
\[
\begin{split}
\psi\varphi(v_{\bar g}\otimes g)
&=\frac{1}{n}\sum_{h\in H}\sum_{j=1}^n\chi_j(g)^{-1}\chi_j(gh)v_{\bar g}\otimes gh\\
&=\sum_{h\in H}\Bigl(\frac{1}{n}\sum_{j=1}^n\chi_j(h)\Bigr)v_{\bar g}\otimes gh\\
&=v_{\bar g}\otimes g,
\end{split}
\]
by the second orthogonality relation for characters of a finite group. This proves (iv) and hence (v).
\end{proof}

We return to the general setting. Our next result rephrases \cite[Lemma 27]{MZ}.

\begin{proposition}\label{pr:loop_simple}
Let $\cV$ be a $\wb{G}$-graded left $\cR$-module. 
\begin{romanenumerate}
\item If $L_\pi(\cV)$ is $G$-graded simple, then $\cV$ is $\wb{G}$-graded simple.
\item If $\cV$ is $\wb{G}$-graded simple, then $L_\pi(\cV)$ is $G$-graded simple if and only if the $G$-pregrading on $\cV$ associated to its $\wb{G}$-grading (see Definition \ref{df:pregrading} and Example \ref{ex:pregrading_associated}) is thin.
\end{romanenumerate}
\end{proposition}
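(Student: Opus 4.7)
The plan is to prove both parts by directly relating graded submodules of $L_\pi(\cV)$ to certain families of subspaces of $\cV$.

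For part (i), I will argue by contrapositive. Suppose $\cU$ is a proper nonzero $\wb{G}$-graded submodule of $\cV$. Set $L_\pi(\cU)\bydef\bigoplus_{g\in G}\cU_{\bar g}\otimes g\subseteq L_\pi(\cV)$. Since $\cU$ is $\wb G$-graded and closed under the $\cR$-action, the defining formula $r_{g'}(u_{\bar g}\otimes g)=(r_{g'}u_{\bar g})\otimes g'g$ keeps us inside $L_\pi(\cU)$, so it is a $G$-graded submodule. It is nonzero because $\cU\ne 0$, and proper because some $\cU_{\bar g}\subsetneq \cV_{\bar g}$, forcing $\cU_{\bar g}\otimes g\subsetneq \cV_{\bar g}\otimes g=L_\pi(\cV)_g$.

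For part (ii), the key observation is a bijective correspondence between $G$-graded $\cR$-submodules $\cW\subseteq L_\pi(\cV)$ and families $\{\cW'_g:g\in G\}$ with $\cW'_g\subseteq \cV_{\bar g}$ and $\cR_{g'}\cW'_g\subseteq \cW'_{g'g}$. Indeed, each homogeneous component satisfies $\cW_g\subseteq L_\pi(\cV)_g=\cV_{\bar g}\otimes g$, so $\cW_g=\cW'_g\otimes g$ for a unique subspace $\cW'_g\subseteq \cV_{\bar g}$, and the submodule condition transcribes exactly to $\cR_{g'}\cW'_g\subseteq \cW'_{g'g}$; conversely, any such family yields a $G$-graded submodule $\bigoplus_g \cW'_g\otimes g$.

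Now assume $\cV$ is $\wb G$-graded simple, and let $\cW$ be a nonzero $G$-graded submodule of $L_\pi(\cV)$. Defining $\cU_{\bar h}\bydef \sum_{g\in\bar h}\cW'_g\subseteq \cV_{\bar h}$, the sum $\cU\bydef\bigoplus_{\bar h\in \wb G}\cU_{\bar h}$ is a nonzero $\wb G$-graded submodule of $\cV$ (the action $\cR_{g'}\cU_{\bar h}\subseteq \cU_{\overline{g'h}}$ follows from $\cR_{g'}\cW'_g\subseteq \cW'_{g'g}$). By $\wb G$-graded simplicity, $\cU=\cV$, i.e.\ $\sum_{g\in\bar h}\cW'_g=\cV_{\bar h}$ for every $\bar h$; thus $\sum_{g\in G}\cW'_g=\cV$, so the family $\{\cW'_g\}$ is a $G$-pregrading on $\cV$ which, by construction, refines the pregrading $\Sigma=\{\cV_{\bar g}:g\in G\}$ associated to the $\wb G$-grading on $\cV$. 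Moreover, $\cW$ is a proper submodule of $L_\pi(\cV)$ if and only if some inclusion $\cW'_g\subseteq \cV_{\bar g}$ is strict, i.e.\ iff the refinement is proper. Conversely, any proper refinement $\{\cV''_g\}$ of $\Sigma$ yields, via $\cW\bydef\bigoplus_g \cV''_g\otimes g$, a proper nonzero $G$-graded submodule of $L_\pi(\cV)$. Combining both directions, $L_\pi(\cV)$ is $G$-graded simple iff $\Sigma$ admits no proper refinement, i.e.\ iff $\Sigma$ is thin.

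The only point requiring care is the verification that the submodule condition in $L_\pi(\cV)$ is precisely equivalent to the pregrading compatibility $\cR_{g'}\cW'_g\subseteq \cW'_{g'g}$; this is a direct unwinding of the definition of the $\cR$-action on $L_\pi(\cV)$ and presents no real obstacle.
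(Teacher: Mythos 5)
Your proof is correct and follows essentially the same route as the paper's. Part (i) is the same contrapositive. For part (ii), your ``bijective correspondence'' between $G$-graded submodules of $L_\pi(\cV)$ and families $\{\cW'_g\}$ with $\cW'_g\subseteq\cV_{\bar g}$ and $\cR_{g'}\cW'_g\subseteq\cW'_{g'g}$ is a clean way to package what the paper does more directly: the paper writes a nonzero $G$-graded submodule as $\bigoplus_g\cW_g\otimes g$ and applies the projection $\omega:L_\pi(\cV)\to\cV$, $v_{\bar g}\otimes g\mapsto v_{\bar g}$, using $\wb G$-graded simplicity of $\cV$ to conclude $\sum_g\cW_g=\cV$. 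Your step of forming $\cU_{\bar h}=\sum_{g\in\bar h}\cW'_g$ and invoking $\wb G$-graded simplicity is precisely the same computation applied to $\omega(\cW)$, so the two arguments coincide in substance; the reverse implication (a proper refinement gives a proper nonzero graded submodule) is handled identically in both.
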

\begin{proof}
If $\cV$ is not $\wb{G}$-graded simple and $\cW=\bigoplus_{\bar g\in\wb{G}}\cW_{\bar g}$ is a proper $\wb{G}$-graded submodule of $\cV$, then $\bigoplus_{g\in G}\cW_{\bar g}\otimes g$ is a proper $G$-graded submodule of $L_\pi(\cV)$, so (i) follows.

Now if $\cV$ is $\wb{G}$-graded simple but the associated $G$-pregrading is not thin, let $\Sigma=\{\cW_g:g\in G\}$ be a proper refinement. Then again $\bigoplus_{g\in G}\cW_g\otimes g$ is a proper $G$-graded submodule of $L_\pi(\cV)$.

Conversely, if $\cV$ is $\wb{G}$-graded simple but $L_\pi(\cV)$ is not $G$-graded simple, then there is a proper $G$-graded submodule $\bigoplus_{g\in G}\cW_g\otimes g$ of $L_\pi(\cV)$, with $\cW_g\subseteq \cV_{\bar g}$ for all $g\in G$, and at least one of these inclusions is strict. The linear map $\omega:L_\pi(\cV)\rightarrow \cV$, $v_{\bar g}\otimes g\mapsto v_{\bar g}$, is a nonzero homomorphism of $\wb{G}$-graded modules. Since $\cV$ is $\wb{G}$-graded simple, $\cV=\omega\bigl(\bigoplus_{g\in G}\cW_g\otimes g\bigr)=\sum_{g\in G}\cW_g$, and hence $\Sigma=\{\cW_g: g\in G\}$ is a proper refinement of the associated $G$-pregrading in $\cV$.
\end{proof}

\begin{example}
Let $\cR=M_2(\FF)$ and let $G$ be the infinite cyclic group generated by an element $g$. Define a $G$-grading on $\cR$ by
\[
\cR_{g^{-1}}=\left\{\left(\begin{smallmatrix} 0&0\\ \alpha&0\end{smallmatrix}\right):\alpha\in \FF\right\},\quad
\cR_{e}=\left\{\left(\begin{smallmatrix} \alpha&0\\ 0&\beta\end{smallmatrix}\right):\alpha,\beta\in \FF\right\},\quad
\cR_{g}=\left\{\left(\begin{smallmatrix} 0&\alpha\\ 0&0\end{smallmatrix}\right):\alpha\in \FF\right\}.
\]
Let $H$ be the subgroup generated by $g^2$, so $\wb{G}=G/H$ is cyclic of order $2$.
Let $\cV=M_{2\times 1}(\FF)=\left\{\left(\begin{smallmatrix} \alpha\\ \beta\end{smallmatrix}\right): \alpha,\beta\in\FF\right\}$. 
Then $\cV$ is $\wb{G}$-graded with
\[
\cV_{\bar e}=\left\{\left(\begin{smallmatrix} 0\\ \alpha\end{smallmatrix}\right):\alpha\in \FF\right\}, \quad
\cV_{\bar g}=\left\{\left(\begin{smallmatrix} \alpha\\ 0\end{smallmatrix}\right):\alpha\in \FF\right\}.
\]
The $\cR$-module $\cV$ is simple (and central), and hence it is $\wb{G}$-graded simple. The corresponding loop module is
\[
L_\pi(\cV)=\bigoplus_{n\in \ZZ}\Bigl(\cV_{\bar e}\otimes g^{2n}\,\oplus \cV_{\bar g}\otimes g^{2n+1}\Bigr),
\]
and each of the subspaces  $\cV_{\bar e}\otimes g^{2n}\,\oplus \cV_{\bar g}\otimes g^{2n+1}$ is a proper $G$-graded submodule. Hence $L_\pi(\cV)$ is not $G$-graded simple.
Note that the $G$-pregrading $\Sigma=\{\cV_g:g\in G\}$, with $\cV_e=\cV_{\bar e}$, $\cV_g=\cV_{\bar g}$, and $\cV_h=0$ for $h\ne e,g$, is a proper refinement of the $G$-pregrading associated to the $\wb{G}$-grading on $\cV$. Hence this pregrading is not thin.
\end{example}

Let $\cV=\bigoplus_{\bar g\in\wb{G}}\cV_{\bar g}$ be a $\wb{G}$-graded left $\cR$-module. Then the graded centralizer $C^{\textup{gr}}(\cV)$ is a $\wb{G}$-graded algebra, so its loop algebra $L_\pi\bigl(C^{\textup{gr}}(\cV)\bigr)$ is a $G$-graded algebra, which acts naturally on the right on $L_\pi(\cV)$ by means of
\[
(v_{\bar g}\otimes g)(d_{\bar g'}\otimes g')\bydef (v_{\bar g}d_{\bar g'})\otimes (gg'),
\]
for any $g,g'\in G$, $v_{\bar g}\in \cV_{\bar g}$ and $d_{\bar g'}\in C(\cV)_{\bar g'}$.
This action centralizes the action of $\cR$.
On the other hand, $L_\pi(\cV)$ is a $G$-graded module, and hence its graded centralizer $C^{\textup{gr}}\bigl(L_\pi(\cV)\bigr)$ is a $G$-graded algebra.
Therefore we can, and will, identify $L_\pi\bigl(C^{\textup{gr}}(\cV)\bigr)$ with a $G$-graded subalgebra of $C^{\textup{gr}}\bigl(L_\pi(\cV)\bigr)$. 

Since $\FF 1$ is a subalgebra of $C(\cV)_{\bar e}$ ($1$ here denotes the identity element of $C(\cV)$, that is, the identity map on $\cV$), $L_\pi(\FF 1)$ is a $G$-graded subalgebra of $L_\pi\bigl(C^{\textup{gr}}(\cV)\bigr)$. Note that $L_\pi(\FF 1)=\bigoplus_{h\in H}\FF(1\otimes h)$ is, up to the natural identification $h\leftrightarrow 1\otimes h$, the group algebra $\FF H$.

\begin{proposition}\label{pr:LpC}
Let $\cV=\bigoplus_{\bar g\in\wb{G}}\cV_{\bar g}$ be a $\wb{G}$-graded left $\cR$-module. Then, considering as above $L_\pi\bigl(C^{\textup{gr}}(\cV)\bigr)$ as a $G$-graded subalgebra of $C^{\textup{gr}}\bigl(L_\pi(\cV)\bigr)$, it coincides with the centralizer in $C^{\textup{gr}}\bigl(L_\pi(\cV)\bigr)$ of $L_\pi(\FF 1)$, that is:
\[
L_\pi\bigl(C^{\textup{gr}}(\cV)\bigr)=\{\delta\in C^{\textup{gr}}\bigl(L_\pi(\cV)\bigr): \delta(1\otimes h)=(1\otimes h)\delta\ \forall h\in H\}.
\]
\end{proposition}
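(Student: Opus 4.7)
The inclusion $L_\pi\bigl(C^{\gr}(\cV)\bigr) \subseteq \{\delta : \delta(1\otimes h) = (1\otimes h)\delta\ \forall h\in H\}$ will be a direct calculation. Taking a homogeneous generator $d_{\bar g'}\otimes g'$ of the left-hand side and evaluating on an arbitrary homogeneous vector $v_{\bar g}\otimes g$, both
\[
\bigl((v_{\bar g}\otimes g)(1\otimes h)\bigr)(d_{\bar g'}\otimes g')\quad\text{and}\quad\bigl((v_{\bar g}\otimes g)(d_{\bar g'}\otimes g')\bigr)(1\otimes h)
\]
compute to $v_{\bar g}d_{\bar g'}\otimes ghg'$, using that $G$ is abelian.

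For the reverse inclusion, the plan is to take a homogeneous $\delta\in C^{\gr}\bigl(L_\pi(\cV)\bigr)$ of degree $g_0\in G$ commuting with every $1\otimes h$, and reconstruct it from a degree-$\bar g_0$ element of $C^{\gr}(\cV)$. Since $\delta$ has degree $g_0$, for each $g\in G$ there is a linear map $\phi_g:\cV_{\bar g}\to \cV_{\overline{gg_0}}$ with
\[
(v_{\bar g}\otimes g)\delta = \phi_g(v_{\bar g})\otimes gg_0.
\]
The fact that $\delta$ is $\cR$-linear translates, for homogeneous $r_{g'}\in\cR_{g'}$, into the relation $\phi_{g'g}(r_{g'}v_{\bar g}) = r_{g'}\phi_g(v_{\bar g})$. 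The commutation condition $\delta(1\otimes h) = (1\otimes h)\delta$ gives, again using commutativity of $G$, the identity $\phi_{gh}(v_{\bar g}) = \phi_g(v_{\bar g})$ for all $h\in H$, so that $\phi_g$ depends only on the coset $\bar g$ and defines a map $\phi_{\bar g}:\cV_{\bar g}\to \cV_{\overline{gg_0}}$.

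Assembling these components yields a $\wb{G}$-graded endomorphism $\phi:=\bigoplus_{\bar g}\phi_{\bar g}$ of $\cV$ of degree $\bar g_0$ that is $\cR$-linear, i.e.\ $\phi\in C^{\gr}(\cV)_{\bar g_0}$, and by construction $\delta = \phi\otimes g_0 \in L_\pi\bigl(C^{\gr}(\cV)\bigr)$. Since $C^{\gr}\bigl(L_\pi(\cV)\bigr)$ is $G$-graded and the centralizer of a graded subalgebra is itself graded, reducing to homogeneous $\delta$ at the outset is legitimate. I do not expect any serious obstacle: the only subtlety is keeping track of the degrees and using the commutativity of $G$ in the right places.
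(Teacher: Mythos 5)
Your proof is correct and matches the paper's argument essentially verbatim: the forward inclusion by direct computation on a homogeneous element, and the reverse inclusion by reducing to homogeneous $\delta$, extracting component maps $\phi_g$, showing $\cR$-linearity forces the right intertwining relation, using the commutation with $1\otimes h$ to conclude $\phi_{gh}=\phi_g$ (hence well-definedness on cosets), and assembling the result into $\phi\otimes g_0\in L_\pi\bigl(C^{\gr}(\cV)\bigr)$. The only cosmetic difference is notation (your $\phi_g$ versus the paper's $\delta_{g_0}^g$), plus your explicit remark that the centralizer of a graded subalgebra is graded, which the paper leaves implicit.
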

\begin{proof}
For any $d_{\bar g}\in C^{\textup{gr}}(\cV)_{\bar g}$ and $h\in H$, $(d_{\bar g}\otimes g)(1\otimes h)=d_{\bar g}\otimes gh=(1\otimes h)(d_{\bar g}\otimes g)$, so $L_\pi\bigl(C^{\textup{gr}}(\cV)\bigr)$ is contained in the centralizer of $L_\pi(\FF 1)$.

Conversely, if $\delta_g\in C\bigl(L_\pi(\cV)\bigr)_g$, define an operator $\delta_g^k\in \Hom(\cV_{\bar k},\cV_{\overline{kg}})$, for any $k\in G$, by the formula
\[
(v_{\bar k}\otimes k)\delta_g=v_{\bar k}\delta_g^k\otimes kg.
\]
For any $l,k\in G$, $r_l\in \cR_l$ and $v_{\bar k}\in \cV_{\bar k}$ we have
\[
\Bigl(r_l(v_{\bar k}\otimes k)\Bigr)\delta_g=\bigl(r_lv_{\bar k}\otimes lk\bigr)\delta_g=(r_lv_{\bar k})\delta_g^{lk}\otimes lkg,
\]
and also
\[
\Bigl(r_l(v_{\bar k}\otimes k)\Bigr)\delta_g=r_l\Bigl((v_{\bar k}\otimes k)\delta_g\Bigr)=
r_l\Bigl(v_{\bar k}\delta_g^k\otimes kg\Bigr)=r_l(v_{\bar k}\delta_g^k)\otimes lkg.
\]
Thus
\begin{equation}\label{eq:rvkd}
(r_lv_{\bar k})\delta_g^{lk}=r_l(v_{\bar k}\delta_g^k).
\end{equation}
Moreover, if $\delta_g$ commutes with $1\otimes h$ for any $h\in H$ we get
\begin{multline*}
v_{\bar k}\delta_g^{kh}\otimes khg=(v_{\bar k}\otimes kh)\delta_g=\bigl(v_{\bar k}\otimes k)(1\otimes h)\bigr)\delta_g\\
=\bigl((v_{\bar k}\otimes k)\delta_g\bigr)(1\otimes h)=(v_{\bar k}\delta_g^k\otimes kh)(1\otimes h)
=v_{\bar k}\delta_g^k\otimes kgh,
\end{multline*}
and we conclude that $\delta_g^k=\delta_g^{kh}$ for all $g,k\in G$ and $h\in H$. Hence we may define $\delta_g^{\bar k}\in \Hom(\cV_{\bar k},\cV_{\overline{kg}})$ by $\delta_g^{\bar k}=\delta_g^k$. Now \eqref{eq:rvkd} becomes
\[
(r_kv_{\bar k})\delta_g^{\overline{lk}}=r_l(v_{\bar k}\delta_g^{\bar k}),
\]
so the homogeneous map of degree $\bar g$ given by
\[
\bar\delta_g:\cV\rightarrow \cV,\quad
v_{\bar k}\mapsto v_{\bar k}\delta_g^{\bar k},
\]
for any $\bar k\in \wb{G}$ and $v_{\bar k}\in\cV_{\bar k}$, is in $C(\cV)_{\bar g}$, and 
\[
(v_{\bar k}\otimes k)\delta_g=v_{\bar k}\bar\delta_g\otimes kg=(v_{\bar k}\otimes k)(\bar\delta_g\otimes g).
\]
This shows that $\delta_g$ belongs to the subalgebra $L_\pi\bigl(C^{\textup{gr}}(\cV)\bigr)$, as required.
\end{proof}

\begin{corollary}\label{co:LpF}
Let $\cV=\bigoplus_{\bar g\in\wb{G}}\cV_{\bar g}$ be a $\wb{G}$-graded simple left $\cR$-module. Then the following conditions are equivalent:
\begin{romanenumerate}
\item $\cV$ is central;
\item $L_\pi(\FF 1)$ is self-centralized in $C^{\textup{gr}}\bigl(L_\pi(\cV)\bigr)$;
\item $L_\pi(\FF 1)$ is a maximal graded subfield of $C^{\textup{gr}}\bigl(L_\pi(\cV)\bigr)$.
\end{romanenumerate}
\end{corollary}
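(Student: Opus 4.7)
The plan is to leverage Proposition \ref{pr:LpC}, which identifies the centralizer of $L_\pi(\FF 1)$ inside $C^{\textup{gr}}(L_\pi(\cV))$ with $L_\pi(C^{\textup{gr}}(\cV))$. First I would record two ambient facts used throughout: since $\cV$ is $\wb{G}$-graded simple, the graded Schur's Lemma makes $C^{\textup{gr}}(\cV)$ a $\wb{G}$-graded division algebra, and a direct calculation (the inverse of $c\otimes g$ being $c^{-1}\otimes g^{-1}$) shows that $L_\pi(C^{\textup{gr}}(\cV))$ is itself a $G$-graded division algebra; moreover, $L_\pi(\FF 1)$ lies in its center because $G$ is abelian and $1$ is central in $C^{\textup{gr}}(\cV)$.

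For (i)$\Leftrightarrow$(ii) I would invoke Proposition \ref{pr:centralizer} to replace $C(\cV)$ by $C^{\textup{gr}}(\cV)$, so that $\cV$ is central precisely when $C^{\textup{gr}}(\cV)=\FF 1$ as a $\wb{G}$-graded algebra; applying the loop construction, this is in turn equivalent to $L_\pi(C^{\textup{gr}}(\cV))=L_\pi(\FF 1)$, which by Proposition \ref{pr:LpC} is exactly condition (ii).

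The implication (ii)$\Rightarrow$(iii) is immediate: any graded subfield of $C^{\textup{gr}}(L_\pi(\cV))$ containing $L_\pi(\FF 1)$ is commutative, hence centralizes $L_\pi(\FF 1)$, and is therefore contained in $L_\pi(\FF 1)$ by (ii). For the converse (iii)$\Rightarrow$(ii) I would argue by contrapositive: if the centralizer $L_\pi(C^{\textup{gr}}(\cV))$ of $L_\pi(\FF 1)$ strictly contains $L_\pi(\FF 1)$, I pick a homogeneous $\delta$ in the difference. Because $L_\pi(\FF 1)$ is central in $L_\pi(C^{\textup{gr}}(\cV))$, the graded subalgebra generated by $L_\pi(\FF 1)\cup\{\delta\}$ is commutative, and Zorn's Lemma then yields a maximal graded commutative subalgebra $E$ of $L_\pi(C^{\textup{gr}}(\cV))$ containing it. The main technical point, and the only step I expect to require care, is the automatic upgrade of $E$ to a graded subfield: for any nonzero homogeneous $e\in E$, its inverse in the ambient $G$-graded division algebra $L_\pi(C^{\textup{gr}}(\cV))$ commutes with every element that commutes with $e$ (from $xe=ex$, multiplying on both sides by $e^{-1}$ yields $e^{-1}x=xe^{-1}$), so $e^{-1}$ commutes with $E$ and maximality forces $e^{-1}\in E$. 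This produces a graded subfield of $C^{\textup{gr}}(L_\pi(\cV))$ properly containing $L_\pi(\FF 1)$, contradicting (iii).
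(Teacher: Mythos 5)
Your proof is correct, and the overall strategy matches the paper's: Proposition \ref{pr:LpC} drives (i)$\Leftrightarrow$(ii), and (ii)$\Rightarrow$(iii) is immediate since a graded subfield containing $L_\pi(\FF 1)$ is commutative and hence contained in its centralizer. The only real divergence is in the remaining implication. The paper goes (iii)$\Rightarrow$(i) by asserting outright that, for any homogeneous $\delta$ in $L_\pi\bigl(C^{\gr}(\cV)\bigr)\setminus L_\pi(\FF 1)$, the algebra generated by $L_\pi(\FF 1)$, $\delta$ and $\delta^{-1}$ is itself a graded subfield, and then invoking maximality. That bare assertion can fail: if $\deg\delta\in H$, say $\delta=d\otimes h$ with $d\in C^{\gr}(\cV)_{\bar e}\setminus\FF 1$ transcendental over $\FF$, the degree-$e$ component of that algebra is the Laurent polynomial ring $\FF[d,d^{-1}]$, which is not a field (for instance $1+d$ has no inverse there, though it does in the ambient graded division algebra). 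Your argument avoids this by not working with the explicitly generated algebra; instead you pass via Zorn's Lemma to a \emph{maximal} commutative graded subalgebra $E$ containing $L_\pi(\FF 1)$ and $\delta$, and then the commutant observation (if $e\in E$ is nonzero homogeneous, then $e^{-1}$ centralizes everything that centralizes $e$, so $e^{-1}\in E$ by maximality) upgrades $E$ to a graded subfield for free. That produces the graded subfield strictly containing $L_\pi(\FF 1)$ that contradicts (iii), with no hypotheses on $\delta$. So the two proofs share the same skeleton, but yours patches a genuine subtlety in the last step that the paper's phrasing glosses over.
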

\begin{proof} (i)$\Rightarrow$(ii) follows from Proposition \ref{pr:LpC}, and (ii)$\Rightarrow$(iii) is clear. Now if (iii) holds, since $C(\cV)=C^{\textup{gr}}(\cV)$ is a graded division algebra (see Proposition \ref{pr:centralizer}), and $L_\pi\bigl(C(\cV)\bigr)$ centralizes $L_\pi(\FF 1)$, any homogeneous element $\delta$ in $L_\pi\bigl(C(\cV)\bigr)\setminus L_\pi(\FF 1)$ satisfies that the algebra generated by $L_\pi(\FF 1)$, $\delta$ and $\delta^{-1}$ is a graded subfield of $C^{\textup{gr}}\bigl(L_\pi(\cV)\bigr)$. By maximality of $L_\pi(\FF 1)$, this shows that $C(\cV)=\FF 1$.
\end{proof}


\section{The groupoids $\frM(\pi)$ and $\frN(\pi)$}\label{se:groupoids}

In the previous section, we saw that many $G$-graded simple left $\cR$-modules can be obtained using the loop module construction. 
To be precise, if $\cW$ is a $G$-graded simple module, $\FF$ is algebraically closed and $\dim\cW<\lvert\FF\rvert$, 
then there exist maximal graded subfields of $C(\cW)$ and they are isomorphic to group algebras (Proposition \ref{pr:Falgclosed}). 
Moreover, if $\cF$ is such a graded subfield, $\supp(\cF)=H$ and $\pi:G\to G/H$ is the natural homomorphism, then $\cW$ is graded isomorphic to $L_\pi(\cV)$ for some $G/H$-graded module $\cV$ (Proposition \ref{pr:LpiVFH}), and this $\cV$ is graded simple (Proposition \ref{pr:loop_simple}) and central (Corollary \ref{co:LpF}). The purpose of this section is to show that $\cV$ is simple as an ungraded module and explore the connection between $\cV$ and $\cW$ --- in particular, investigate to what extent $\cV$ is determined by $\cW$ and $\cF$. We will work over an arbitrary field $\FF$ and it will be convenient to use the language of groupoids. 

\begin{df}
Fix a subgroup $H\le G$ and let $\pi:G\to\wb{G}=G/H$ be the natural homomorphism.
\begin{itemize}
\item $\frM(\pi)$ is the groupoid whose objects are the simple, central, and $\wb{G}$-graded left $\cR$-modules $\cV=\bigoplus_{\bar g\in\wb{G}}\cV_{\bar g}$ such that the $G$-pregrading associated to the $\wb{G}$-grading is thin, and whose morphisms are the $\wb{G}$-graded isomorphisms (in other words, the nonzero morphisms in ${}_\cR\mathrm{Mod}^{\wb{G}}$).

\item $\frN(\pi)$ is the groupoid whose objects are the pairs $(\cW,\cF)$, where $\cW$ is a $G$-graded simple left $\cR$-module and $\cF$ is a maximal graded subfield of $C(\cW)$ isomorphic to the group algebra $\FF H$ as a $G$-graded algebra, and the morphisms $(\cW,\cF)\rightarrow (\cW',\cF')$ are the pairs $(\phi,\psi)$, where $\phi:\cW\rightarrow \cW'$ is an isomorphism of $G$-graded modules (in other words, a nonzero morphism in ${}_\cR\mathrm{Mod}^G$), $\psi:\cF\rightarrow \cF'$ is an isomorphism of $G$-graded algebras, and $\phi(wc)=\phi(w)\psi(c)$ for all $w\in\cW$ and $c\in\cF$.
\end{itemize}
\end{df}

\begin{remark}\label{rm:groupoids} 
Technically speaking, the objects of $\frM(\pi)$ are pairs $(\cV,\Sigma)$, where $\cV$ is a simple, central left $\cR$-module and $\Sigma$ is a $\wb{G}$-grading on $\cV$ that makes it a $\wb{G}$-graded left $\cR$-module. We have preferred to simplify the notation.
Also, for a morphism $(\phi,\psi)$ in $\frN(\pi)$, $\psi$ is determined by $\phi$. Indeed, $\psi$ is the restriction of the graded isomorphism $C(\cW)\to C(\cW')$ sending $\delta\mapsto\phi\delta\phi^{-1}$. Thus, we could define the morphisms $(\cW,\cF)\to(\cW',\cF')$ in $\frN(\pi)$ as $G$-graded isomorphisms $\phi:\cW\to\cW'$ such that $\phi\cF\phi^{-1}=\cF'$. 
Note that if $\phi:\cW\rightarrow \cW'$ is an isomorphism of $G$-graded modules, then $\phi$ induces an isomorphism $(\cW,\cF)\rightarrow(\cW',\phi\cF\phi^{-1})$ in $\frN(\pi)$.
\end{remark}

\begin{example}\label{ex:M2RZ2}
Let $\FF=\RR$ and $\cR=M_2(\RR)$. Let $G=\{e,g\}$ be a cyclic group of order $2$ and $H=G$, so $\wb{G}$ is trivial. Define a $G$-grading on $\cR$ by
\[
\cR_e=\RR\begin{pmatrix} 1&0\\ 0&1\end{pmatrix} +\RR\begin{pmatrix} 0&1\\ -1&0\end{pmatrix},\quad
\cR_g=\RR\begin{pmatrix} 1&0\\ 0&-1\end{pmatrix} +\RR\begin{pmatrix} 0&1\\ 1&0\end{pmatrix}.
\]
Then $\cR$ is a graded division algebra. Let $\cW=\cR$ be the regular left $\cR$-module with the above grading. Then $\cW$ is a $G$-graded simple module whose centralizer $C(\cW)$ is $\cR$ itself, acting by right multiplication. Observe that
\[
\cF^1=\RR\begin{pmatrix} 1&0\\ 0&1\end{pmatrix}+\RR\begin{pmatrix} 1&0\\ 0&-1\end{pmatrix}\quad\text{and}\quad
\cF^2=\RR\begin{pmatrix} 1&0\\0&1\end{pmatrix}+\RR\begin{pmatrix} 0&1\\ 1&0\end{pmatrix},
\]
are two different maximal graded subfields $G$-graded isomorphic to $\FF H$. Hence $(\cW,\cF^1)$ and $(\cW,\cF^2)$, are two different objects in $\frN(\pi)$.
\end{example}

\begin{remark}
Given an object $(\cW,\cF)$ in $\frN(\pi)$ and an element $g\in G$, we may consider the $G$-graded left $\cR$-module $\cW^{[g]}$, which equals $\cW$ as a left $\cR$-module, but whose grading is `shifted' by $g$, that is, $\cW^{[g]}_{ag}\bydef \cW_{a}$ for all $a\in G$. If $g\in\supp(C(\cW))$ then any nonzero $d\in C(\cW)_g$ can be regarded as a graded isomorphism $\cW^{[g]}\rightarrow \cW$ and hence yields an isomorphism $(\cW^{[g]},d\cF d^{-1})\to(\cW,\cF)$ in $\frN(\pi)$.
(Recall that composition in $C(\cW)$ is applied from left to right.)
Conversely, if $\cW^{[g]}$ is graded isomorphic to $\cW$ then $g\in\supp(C(\cW))$.
\end{remark}

The following results are analogous to the results in \cite{ABFP} on loop algebras.

\begin{proposition}\label{pr:VLpV}
If $\cV$ is an object of $\frM(\pi)$, then $\bigl(L_\pi(\cV),L_\pi(\FF 1)\bigr)$ is an object of $\frN(\pi)$.
\end{proposition}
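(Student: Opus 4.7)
The plan is to piece together three facts from earlier in the paper. First, I would unpack what it means for $\cV$ to be an object of $\frM(\pi)$: it is simple (as an ungraded $\cR$-module), central (so $C(\cV)=\FF 1$), $\wb G$-graded, with thin associated $G$-pregrading. And I must verify that $(L_\pi(\cV),L_\pi(\FF 1))$ satisfies the three conditions defining an object of $\frN(\pi)$, namely that $L_\pi(\cV)$ is $G$-graded simple, that $L_\pi(\FF 1)$ is a maximal graded subfield of $C(L_\pi(\cV))$, and that $L_\pi(\FF 1)\cong \FF H$ as $G$-graded algebras.

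For the first condition, I would note that simplicity of $\cV$ as an ungraded module trivially implies $\wb G$-graded simplicity (any graded submodule is in particular a submodule). Combined with the assumption that the associated $G$-pregrading is thin, Proposition \ref{pr:loop_simple}(ii) immediately yields that $L_\pi(\cV)$ is $G$-graded simple.

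For the second condition, since $\cV$ is $\wb G$-graded simple and central, Corollary \ref{co:LpF} applies and gives that $L_\pi(\FF 1)$ is a maximal graded subfield of $C^{\textup{gr}}(L_\pi(\cV))$. To conclude that it is maximal in $C(L_\pi(\cV))$, I would invoke Proposition \ref{pr:centralizer}: because $L_\pi(\cV)$ is $G$-graded simple (as just established), its centralizer equals its graded centralizer.

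For the third condition, I would recall the observation made just before Proposition \ref{pr:LpC}: by construction, $L_\pi(\FF 1)=\bigoplus_{h\in H}\FF(1\otimes h)$, and the map $h\mapsto 1\otimes h$ is a $G$-graded algebra isomorphism $\FF H\to L_\pi(\FF 1)$ (both sides having $h$ in homogeneous degree $h$). There is no real obstacle here; the entire statement is a direct packaging of Proposition \ref{pr:loop_simple}(ii), Corollary \ref{co:LpF}, and Proposition \ref{pr:centralizer}, so the proof is essentially one line once the ingredients are assembled correctly.
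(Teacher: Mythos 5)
Your proof is correct and takes essentially the same approach as the paper: the paper's own proof cites exactly Proposition \ref{pr:loop_simple} and Corollary \ref{co:LpF}, and you have simply spelled out the additional appeals to Proposition \ref{pr:centralizer} (to pass from $C^{\textup{gr}}$ to $C$) and to the definitional identification $L_\pi(\FF 1)\simeq\FF H$, which the paper leaves implicit.
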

\begin{proof}
If $\cV$ is an object of $\frM(\pi)$, then $L_\pi(\cV)$ is $G$-graded simple by Proposition \ref{pr:loop_simple}, and $L_\pi(\FF 1)$ is a maximal graded subfield of $C\bigl(L_\pi(\cV)\bigr)$ by Corollary \ref{co:LpF}. 
\end{proof}

\begin{df}
Let $(\cW,\cF)$ be an object in $\frN(\pi)$ and let $\rho:\cF\rightarrow \FF$ be a homomorphism of unital algebras. A \emph{$\rho$-specialization} of $(\cW,\cF)$ is a surjective homomorphism of $\cR$-modules $\gamma:\cW\rightarrow \cV$, where $\cV$ is a $\wb{G}$-graded left $\cR$-module, satisfying:
\begin{itemize}
\item $\gamma(wc)=\gamma(w)\rho(c)$, for all $w\in\cW$ and $c\in\cF$,
\item $\gamma(\cW_g)\subseteq \cV_{\bar g}$ for all $g\in G$ (in other words, $\gamma$ is a $\wb{G}$-graded homomorphism).
\end{itemize}
In this case, $\cV$ is said to be a \emph{central image} of the pair $(\cW,\cF)$.
\end{df}

The natural example of a $\rho$-specialization is the quotient map $\cW\rightarrow \cW/\cW\ker(\rho)$. Note that $\ker(\rho)$ is $\wb{G}$-graded trivially, so $\cW/\cW\ker(\rho)$ is endowed with a natural $\wb{G}$-grading: $\bigl(\cW/\cW\ker(\rho)\bigr)_{\bar g}$ consists of the classes modulo $\cW\ker(\rho)$ of the elements in $\bigoplus_{h\in H}\cW_{gh}$. The central image $\cW/\cW\ker(\rho)$ will be referred to as a \emph{canonical central image}.

The next result shows that, essentially, $\rho$-specializations are unique, and that the objects in $\frN(\pi)$ are, up to isomorphism, obtained from objects in $\frM(\pi)$ via the loop construction as in Proposition \ref{pr:VLpV}. The proof uses some of the arguments in Proposition \ref{pr:LpiVFH}.

\begin{theorem}\label{th:specializations}
Let $(\cW,\cF)$ be an object in $\frN(\pi)$, $\rho:\cF\rightarrow\FF$ a homomorphism of unital algebras, $\cV$ a $\wb{G}$-graded left $\cR$-module, and $\gamma:\cW\rightarrow \cV$ a $\rho$-specialization of $(\cW,\cF)$. Then we have the following:
\begin{romanenumerate}
\item For any $g\in G$, $\gamma$ restricts to a linear bijection of $\cW_g$ onto $\cV_{\bar g}$.

\item If $\Theta$ is a transversal of $H$ in $G$ and, for each $g\in G$, $X^g$ is an $\FF$-basis of $\cW_g$, then $X=\bigcup_{g\in\Theta} X^g$ is a homogeneous $\cF$-basis of $\cW$, and $\gamma$ maps $X$ bijectively onto an $\FF$-basis of $\cV$.

\item There is a unique $\wb{G}$-graded isomorphism $\kappa: \cW/\cW\ker(\rho)\rightarrow \cV$ such that $\gamma(w)=\kappa(w+\cW\ker(\rho))$ for any $w\in \cW$.

\item $\bigl(L_\pi(\cV),L_\pi(\FF 1)\bigr)$ is an object of $\frN(\pi)$ isomorphic to $(\cW,\cF)$.

\item $\cV$ is an object of $\frM(\pi)$.
\end{romanenumerate}
\end{theorem}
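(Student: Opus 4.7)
I will fix notation first: since $\cF\cong\FF H$ as $G$-graded algebras, write $\cF=\bigoplus_{h\in H}\FF c_h$ with $c_{h_1}c_{h_2}=c_{h_1h_2}$, and encode $\rho$ by the group homomorphism $\hat\rho:H\to\FF^\times$, $\hat\rho(h)\bydef\rho(c_h)$. Two observations will be used throughout: right multiplication by $c_h$ gives a linear bijection $\cW_g\to\cW_{gh}$ (because $c_h$ is an invertible element of $C(\cW)$ of degree $h$), and the defining property of $\gamma$ yields $\gamma(wc_h)=\hat\rho(h)\gamma(w)$. The whole proof hinges on (i); items (ii)--(iv) then follow by routine manipulations, and only the ungraded simplicity in (v) is truly delicate. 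For injectivity in (i), the subspace $\cN\bydef\bigoplus_{g\in G}\ker(\gamma|_{\cW_g})$ is an honest $G$-graded $\cR$-submodule of $\cW$ (since $\gamma$ is $\cR$-linear and $\cR$ preserves the $G$-grading), so $G$-graded simplicity of $\cW$ forces $\cN=0$ --- the alternative $\cN=\cW$ would force $\gamma\equiv 0$, contradicting $\cV\neq 0$. For surjectivity, given $v\in\cV_{\bar g}$, I lift $v=\gamma(w)$, retain only the $G$-homogeneous components of $w$ lying in the coset $gH$, write each $w_{gh}=w'_g(h)c_h$ with $w'_g(h)\in\cW_g$, and observe that $\tilde w\bydef\sum_{h\in H}\hat\rho(h)w'_g(h)\in\cW_g$ satisfies $\gamma(\tilde w)=v$.

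Statement (ii) follows immediately: the decomposition $\cW=\bigoplus_{g\in\Theta}\cW_g\cF$ (from $\cW_{gh}=\cW_gc_h$) exhibits $X=\bigcup_{g\in\Theta}X^g$ as an $\cF$-free basis of $\cW$, and (i) ensures that $\gamma$ maps $X$ bijectively onto a basis of $\cV$. For (iii) I define $\kappa(w+\cW\ker(\rho))\bydef\gamma(w)$; well-definedness follows from $\gamma(wk)=\gamma(w)\rho(k)=0$ for $k\in\ker(\rho)$, while surjectivity and $\wb G$-gradedness are transparent. The one delicate point is injectivity, handled by the argument of (i): if $w\in\cW_{\bar g}$ and $\gamma(w)=0$, writing $w=\sum_h w'_g(h)c_h$ yields $\sum_h\hat\rho(h)w'_g(h)=0$ by (i), so $w=\sum_h w'_g(h)(c_h-\hat\rho(h)\cdot 1)\in\cW\ker(\rho)$ because $c_h-\hat\rho(h)\cdot 1\in\ker(\rho)$. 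For (iv) I set $\phi(w)\bydef\gamma(w)\otimes g$ for $w\in\cW_g$; this is a $G$-graded $\cR$-homomorphism that is bijective on each homogeneous component by (i), hence a $G$-graded isomorphism. The computation $\phi(wc_h)=\phi(w)\bigl(\hat\rho(h)(1\otimes h)\bigr)$ shows that conjugation by $\phi$ restricts to a $G$-graded algebra isomorphism $\psi:\cF\to L_\pi(\FF 1)$, $c_h\mapsto\hat\rho(h)(1\otimes h)$, so $(\phi,\psi)$ is the required isomorphism in $\frN(\pi)$.

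Part (v) is assembled from (iv) together with earlier results. Since $L_\pi(\cV)\cong\cW$ is $G$-graded simple, Proposition \ref{pr:loop_simple} yields at once that $\cV$ is $\wb G$-graded simple and that its associated $G$-pregrading is thin; since $\psi$ identifies $L_\pi(\FF 1)$ with the maximal graded subfield $\cF$ of $C(\cW)$, $L_\pi(\FF 1)$ is itself a maximal graded subfield of $C^\gr(L_\pi(\cV))$, so Corollary \ref{co:LpF} gives that $\cV$ is central. The main obstacle is ungraded simplicity of $\cV$: my plan is to lift a hypothetical proper nonzero submodule $\cV_1\subsetneq\cV$ to $\cU\bydef\gamma^{-1}(\cV_1)\subseteq\cW$, observe that $\cU$ is both $\cR$-stable (by construction) and $\cF$-stable (because $\cF$ acts on $\cV$ through the character $\rho$, so every $\FF$-subspace of $\cV$ is automatically $\cF$-invariant), and then argue --- using the $\cF$-basis established in (ii) together with maximality of $\cF$ in $C(\cW)$ --- that any such $(\cR,\cF)$-bistable submodule of $\cW$ lying strictly between $\cW\ker(\rho)$ and $\cW$ must be $G$-graded, contradicting the $G$-graded simplicity of $\cW$.
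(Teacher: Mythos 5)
Your proofs of (i)--(iv) are correct. For (i) you take a genuinely different route from the paper: you obtain injectivity of $\gamma|_{\cW_g}$ directly by observing that $\cN\bydef\bigoplus_{g\in G}\ker(\gamma|_{\cW_g})$ is a $G$-graded submodule which is proper (since $\gamma\neq 0$) and hence vanishes by graded simplicity, and you prove surjectivity by an explicit lift; the paper instead proves surjectivity first, builds $\phi:\cW\to L_\pi(\cV)$, and reads off injectivity only after using graded simplicity to show $\phi$ is an isomorphism. Your computation of $\ker\gamma=\cW\ker(\rho)$ in (iii), and your handling of (iv) without normalizing $\rho(c_h)=1$, amount to the same content the paper extracts via the transversal $\Theta$.

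The gap is in (v), and it is the core of the theorem. For the ungraded simplicity of $\cV$ you propose to show that $\cU\bydef\gamma^{-1}(\cV_1)$, for a hypothetical proper nonzero submodule $\cV_1\subsetneq\cV$, ``must be $G$-graded'', and then invoke graded simplicity of $\cW$. That intermediate claim cannot be established: since $\ker\gamma=\cW\ker(\rho)$ is $\wb{G}$-graded, the $\wb{G}$-graded subspaces of $\cW$ containing it correspond bijectively to the $\wb{G}$-graded subspaces of $\cV$, and $\cV_1$ is never $\wb{G}$-graded because $\cV$ is $\wb{G}$-graded simple, which you have already shown. Thus $\cU$ is never $\wb{G}$-graded, hence never $G$-graded, and no use of the $\cF$-basis from (ii) will make it so; the contradiction cannot come from gradedness of $\cU$. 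The paper obtains it from the structure of $C(\cW)$ instead: take $x=x_{g_0}+\cdots+x_{g_n}\in\cU\setminus\cW\ker(\rho)$ with $n\ge 1$ minimal; show the cosets $g_iH$ are pairwise distinct; deduce from minimality that for each homogeneous $w_g\in\cW_g$ there exist \emph{unique} $w_{gk_i}\in\cW_{gk_i}$ (with $k_i\bydef g_ig_0^{-1}\notin H$) such that $w_g+\sum_i w_{gk_i}\in\cU$; and package these data as nonzero homogeneous endomorphisms $d_i\in C(\cW)_{k_i}$, $w_g\mapsto w_{gk_i}$, which are then shown to commute with $\cF$ --- contradicting the maximality of $\cF$ as a graded subfield of the graded division algebra $C(\cW)$. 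An argument of this shape is what the maximality hypothesis is for; it cannot be replaced by the claim that $\cU$ is graded.
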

\begin{proof}
Consider the $\wb{G}$-grading on $\cW$: $\cW=\bigoplus_{\bar g\in\wb{G}}\cW_{\bar g}$ where, for any $g\in G$, $\cW_{\bar g}=\bigoplus_{h\in H}\cW_{gh}$.
Both $\cW\ker(\rho)$ and $\ker(\gamma)$ are $\wb{G}$-graded, and $\cW\ker(\rho)$ is contained in $\ker(\gamma)$. Thus $\gamma$ induces a surjective $\wb{G}$-graded homomorphism $\kappa:\cW/\cW\ker(\rho)\rightarrow \cV$ such that $\gamma(w)=\kappa(w+\cW\ker(\rho))$ for all $w\in\cW$.

Since $\cF$ is a graded field, $\cW$ has a homogeneous $\cF$-basis. Moreover, for $\Theta$ as in (ii), the map $\Bigl(\bigoplus_{g\in \Theta}\cW_g\Bigr)\otimes_\FF\cF\rightarrow \cW$, given by right multiplication by elements of $\cF$, is a $G$-graded linear bijection. Indeed, for any $k\in G$, there is a unique $g\in\Theta$ and $h\in H$ such that $k=gh$, and $\dim\cF_h=1$, so $\cW_g\otimes_\FF \cF$ is mapped bijectively onto $\cW_g\cF=\bigoplus_{h\in H}\cW_{gh}=\cW_{\bar g}$. Since $\cF=\FF 1\oplus \ker(\rho)$, it follows that $\cW_{\bar g}=\cW_g\oplus \cW_g\ker(\rho)$ for any $g\in G$. In particular, $\cV_{\bar g}=\gamma(\cW_{\bar g})=\gamma(\cW_g)$, so $\gamma$ restricts to a surjection $\cW_g\rightarrow \cV_{\bar g}$.

Now define $\phi:\cW\rightarrow L_\pi(\cV)$ by $\phi(w)=\gamma(w)\otimes g$ for all $g\in G$ and $w\in\cW_g$. Since $\phi(\cW_g)=\gamma(\cW_g)\otimes g=\cV_{\bar g}\otimes g$, $\phi$ is a surjective $G$-graded homomorphism. But $\cW$ is $G$-graded simple, so $\phi$ is an isomorphism. Hence, the restriction of $\gamma$ gives a linear bijection $\cW_g\rightarrow \cV_{\bar g}$, proving (i) and (ii). From the previous paragraph, we know that $\cW=\Bigl(\bigoplus_{g\in\Theta}\cW_g\Bigr)\oplus \Bigl(\bigoplus_{g\in\Theta}\cW_g\ker(\rho)\Bigr)$, and now we have shown that $\Bigl(\bigoplus_{g\in\Theta}\cW_g\Bigr)\cap\ker(\gamma)=0$. Since $\bigoplus_{g\in\Theta}\cW_g\ker(\rho)\subseteq\cW\ker(\rho)\subseteq\ker(\gamma)$, we conclude that $\ker(\gamma)=\cW\ker(\rho)$ and part (iii) follows. 

We can choose elements $c_h\in\cF_h$ such that $\rho(c_h)=1$ for all $h\in H$. Then $\cF=\bigoplus_{h\in H}\FF c_h$, and $c_{h_1}c_{h_2}=c_{h_1h_2}$ for all $h_1,h_2\in H$. Then, for any $g\in G$, $w\in \cW_g$ and $h\in H$, we have 
\[
\phi(wc_h)=\gamma(wc_h)\otimes gh=\gamma(w)\otimes gh
=(\gamma(w)\otimes g)(1\otimes h)=\phi(w)(1\otimes h),
\]
so defining $\psi(c_h)=1\otimes h$ for all $h\in H$, we obtain that $(\phi,\psi)$ is an isomorphism $(\cW,\cF)\rightarrow \bigl(L_\pi(\cV),L_\pi(\FF 1)\bigr)$, proving (iv).

Finally, for (v), it is clear that $\cV$ is $\wb{G}$-graded. Also, $\cV$ is central because of (iv) and Corollary \ref{co:LpF}. By Proposition \ref{pr:loop_simple} $\cV$ is $\wb{G}$-graded simple, and the $G$-pregrading associated to its $\wb{G}$-grading is thin. It remains to show that $\cV$ is simple as an ungraded module. In other words, we must prove that $\cW\ker(\rho)$ is a maximal submodule of $\cW$. The next arguments are inspired by \cite[proof of Lemma 19]{MZ}.

Assume that $\cW'$ is a submodule of $\cW$ with $\cW\ker(\rho)\subsetneq \cW'\subsetneq \cW$ and pick an element 
\[
x=x_{g_0}+x_{g_1}+\cdots +x_{g_n}\in \cW'\setminus\cW\ker(\rho)
\]
with $x_{g_i}\in \cW_{g_i}$, $i=0,1,\ldots,n$, and  $n$ minimal. Since $\cW$ is graded simple, $n\geq 1$. We will follow several steps to get a contradiction.

\smallskip

\noindent$\bullet$\quad The cosets $g_iH$, $i=0,1,\ldots, n$, are distinct.

Indeed, if $g_iH=g_jH$ for $i < j$, then $g_j=g_ih$ for some $h\in H$ and then $x_{g_j}=y_{g_i}c_h$ for some $y_{g_i}\in \cW_{g_i}$. But then $x_{g_j}=y_{g_i}+y_{g_i}(c_h-1)\in\cW_{g_i}+\cW\ker(\rho)$, and hence $x$ is congruent to 
\[
x_{g_0}+\cdots+(x_{g_i}+y_{g_i})+\cdots+x_{g_{j-1}}+x_{g_{j+1}}+\cdots +x_{g_n}
\]
modulo $\cW\ker(\rho)$, a contradiction with the minimality of $n$.

\smallskip

\noindent$\bullet$\quad Let $k_i\bydef g_ig_0^{-1}$ for $i=1,\ldots,n$, so $H\ne k_iH\ne k_jH$ for $i\ne j$. Then, for any $g\in G$, any nonzero element in $\cW_g\oplus\cW_{gk_1}\oplus\cdots\oplus\cW_{gk_n}$ does not belong to $\cW\ker(\rho)$.

Indeed, the set $\{g,gk_1,\ldots,gk_n\}$ can be completed to a transversal $\Theta$ of $H$ in $G$, and $\cW=\Bigl(\bigoplus_{g\in\Theta}\cW_g\Bigr)\oplus\cW\ker(\rho)$.

\smallskip

\noindent$\bullet$\quad  For any $g\in G$ and $w_g\in \cW_g$ there are unique elements $w_{gk_i}\in\cW_{gk_i}$, $i=1,\ldots,n$, such that
\[
w_g+w_{gk_1}+\cdots+w_{gk_n}\in\cW'.
\]

First note that if $w_g+w_{gk_1}+\cdots+w_{gk_n}$ and $w_g+w_{gk_1}'+\cdots+w_{gk_n}'$ both lie in $\cW'$, for some $w_{gk_i},w_{gk_i}'\in \cW_{gk_i}$, $i=1,\ldots,n$, then $\sum_{i=1}^n(w_{gk_i}-w_{gk_i}')\in \cW'$. The minimality of $n$ forces $\sum_{i=1}^n(w_{gk_i}-w_{gk_i}')\in \cW\ker(\rho)$.  Hence $\sum_{i=1}^n(w_{gk_i}-w_{gk_i}')=0$, so $w_{gk_i}=w_{gk_i}'$ for all $i=1,\ldots,n$.

Now, $\cW$ is graded simple, so there is an element $r\in\cR_{gg_0^{-1}}$ such that $w_g=rx_{g_0}$, and hence
\[
w_g+rx_{g_1}+\cdots+rx_{g_n}=r(x_{g_0}+x_{g_1}+\cdots+x_{g_n})\in \cW'.
\]
Thus, we can take $w_{gk_i}=rx_{g_i}$. 

\smallskip

\noindent$\bullet$\quad For each $i=1,\ldots,n$, define a homogeneous linear map $d_i:\cW\rightarrow \cW$ (of degree $k_i$) by $w_gd_i=w_{gk_i}$. Clearly, these are well defined and belong to $C(\cW)$. By construction, for any $g\in G$ and $w_g\in \cW_g$, $w_g(1+d_1+\cdots+d_n)$ is the only element in $\cW_g\oplus\cW_{gk_1}\oplus\cdots\oplus \cW_{gk_n}$ that belongs to $\cW'$ and whose homogeneous component of degree $g$ is $w_g$. Hence, for any $h\in H$, both $(w_gc_h)(1+d_1+\cdots+d_n)$ and $\bigl(w_g(1+d_1+\cdots+d_n)\bigr)c_h=w_g(1+d_1+\cdots+d_n)+\bigl(w_g(1+d_1+\cdots+d_n)\bigr)(c_h-1)$ are in $\cW'$, because $c_h-1$ is in $\ker(\rho)$. By uniqueness, we conclude that $d_ic_h=c_hd_i$ for all $i=1,\ldots,n$ and $h\in H$. Hence $d_1,\ldots,d_n$ are in the centralizer of $\cF$ in $C(\cW)$, which contradicts the fact that $\cF$ is a maximal graded subfield of $C(\cW)$.
\end{proof}

\begin{remark}
The proof above shows that if $(\cW,\cF)$ is an object in $\frN(\pi)$, then $\cW$ contains maximal (ungraded) submodules. This is the condition in \cite[Theorem 1.4]{BL} to express $\cW$ in terms of a simple module with a thin pregrading. Here this expression is more explicit.
\end{remark}

The following example lies at the heart of \cite[Example 34]{MZ}.

\begin{example}\label{ex:M2FZ22}
Take $\cR=M_2(\FF)=\cW$, for any field $\FF$ of characteristic different from $2$, and let $G=\langle g,h\rangle$ be the direct product of two cyclic groups of order $2$. Define a $G$-grading on $M_2(\FF)$ as follows:
\[
\cR_e=\FF\begin{pmatrix} 1&0\\ 0&1\end{pmatrix},\quad
\cR_g=\FF\begin{pmatrix} 1&0\\ 0&-1\end{pmatrix},\quad
\cR_h=\FF\begin{pmatrix} 0&1\\ 1&0\end{pmatrix},\quad
\cR_{gh}=\FF\begin{pmatrix} 0&1\\ -1&0\end{pmatrix}.
\]
Then $\cW$ is $G$-graded simple, $C(\cW)$ is $\cR$ acting by right multiplication, and there are three different maximal graded subfields of $C(\cW)$: $\cF_1=\cR_e\oplus\cR_g$, $\cF_2=\cR_e\oplus\cR_h$ and $\cF_3=\cR_e\oplus\cR_{gh}$. If $H_i$ is the support of $\cF_i$ and $\pi_i:G\rightarrow G/H_i$ is the corresponding natural homomorphism, Theorem \ref{th:specializations} gives $G$-graded isomorphisms of $\cW$ onto loop modules $L_{\pi_i}(\cV_i)$, where each $\cV_i$ is graded by a different quotient of $G$.
\end{example}

Let $\xi:\wb{G}\rightarrow G$ be a fixed section of $\pi$ (that is, $\xi$ is a map such that $\pi\xi=\id_{\wb{G}}$). Then $\Theta=\xi(\wb{G})$ is a transversal of $H$ in $G$, and any such transversal is obtained in this way. 

\begin{df}
Given a $\wb{G}$-graded left $\cR$-module $(\cV,\cdot)$ and a character $\chi$ of $H$, we may define a new $\wb{G}$-graded left $\cR$-module $\cV^\chi=(\cV,\bullet)$, which coincides with $\cV$ as a $\wb{G}$-graded vector space, but where the action of $\cR$ is given by
\[
r_g\bullet v_{\bar k}\bydef \chi\bigl(g\xi(\bar k)\xi(\overline{gk})^{-1}\bigr)r_g\cdot v_{\bar k},
\]
for all $g,k\in G$, $r_g\in \cR_g$ and $v_{\bar k}\in \cV_{\bar k}$. This module $\cV^\chi$ is referred to as $\cV$ \emph{twisted by the character $\chi$}.
\end{df} 

It is straightforward to check that $\cV^\chi$ is indeed a left $\cR$-module. 
If we consider a different section $\tilde\xi:\wb{G}\rightarrow G$, the $\wb{G}$-graded isomorphism class of $\cV^\chi$ does not change. 
Indeed, with the new $\tilde \xi$, the action is given by
\[
r_g\diamond v_{\bar k}\bydef \chi\bigl(g\tilde\xi(\bar k)\tilde\xi(\overline{gk})^{-1}\bigr)r_g\cdot v_{\bar k}.
\]
Consider the $\wb{G}$-graded linear isomorphism 
\[
\begin{split}
\varphi:\cV&\longrightarrow \cV\\ 
v_{\bar g}&\mapsto \chi\bigl(\tilde\xi(\bar g)\xi(\bar g)^{-1}\bigr)v_{\bar g},
\end{split}
\] 
for all $\bar g\in \wb{G}$ and $v_{\bar g}\in \cV_{\bar g}$. Then for $r_g\in \cR_g$ and $v_{\bar k}\in \cV_{\bar k}$, we have:
\[
\begin{split}
\varphi(r_g\diamond v_{\bar k})&=
  \chi\bigl(\tilde\xi(\overline{gk})\xi(\overline{gk})^{-1}\bigr)r_g\diamond v_{\bar k}\\
  &=\chi\bigl(\tilde\xi(\overline{gk})\xi(\overline{gk})^{-1}\bigr)
     \chi\bigl(g\tilde\xi(\bar k)\tilde\xi(\overline{gk})^{-1}\bigr)r_g\cdot v_{\bar k}\\
  &=\chi\bigl(g\tilde\xi(\bar k)\xi(\overline{gk})^{-1}\bigr)r_g\cdot v_{\bar k},
\end{split}
\]
while
\[
\begin{split}
r_g\bullet\varphi(v_{\bar k})
	&=\chi\bigl(\tilde\xi(\bar k)\xi(\bar k)^{-1}\bigr)r_g\bullet v_{\bar k}\\
	&=\chi\bigl(\tilde\xi(\bar k)\xi(\bar k)^{-1}\bigr)
	    \chi\bigl(g\xi(\bar k)\xi(\overline{gk})^{-1}\bigr)r_g\cdot v_{\bar k}\\
	&=\chi\bigl(g\tilde\xi(\bar k)\xi(\overline{gk}^{-1}\bigr)r_g\cdot v_{\bar k},
\end{split}
\]
so $\varphi$ is an $\cR$-module isomorphism $(\cV,\diamond)\rightarrow(\cV,\bullet)$.

\begin{proposition}\label{pr:Vxi}
Let $\xi:\wb{G}\rightarrow G$ be a section of $\pi$ and let $(\cV,\cdot)$ be a $\wb{G}$-graded left $\cR$-module. Let $\chi$ be a character of $H$. If $\chi$ extends to a character of $G$ (as is always the case if $\FF$ is algebraically closed), which we also denote by $\chi$, then  $\cV^\chi$ is $\wb{G}$-graded isomorphic to $\cV^{\alpha_\chi}$, where the automorphism $\alpha_\chi$ of $\cR$ is defined by Equation \eqref{eq:alpha_chi}.
\end{proposition}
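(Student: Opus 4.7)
The plan is to build an explicit $\wb{G}$-graded $\FF$-linear bijection $\varphi:\cV\to\cV$ and verify it intertwines the two $\cR$-actions $\bullet$ (for $\cV^\chi$) and $*$ (for $\cV^{\alpha_\chi}$, where $r_g * v = \chi(g)\, r_g\cdot v$). Since $\chi$ is assumed to extend to a character of $G$, which I also denote by $\chi$, the key observation is that for any $g,k\in G$ the element $g\xi(\bar k)\xi(\overline{gk})^{-1}$ lies in $H$, and by multiplicativity of the extended character we can rewrite
\[
\chi\bigl(g\xi(\bar k)\xi(\overline{gk})^{-1}\bigr)=\chi(g)\,\chi\bigl(\xi(\bar k)\bigr)\,\chi\bigl(\xi(\overline{gk})\bigr)^{-1}.
\]

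With this in hand, I would define
\[
\varphi:\cV\longrightarrow\cV,\qquad v_{\bar k}\mapsto \chi\bigl(\xi(\bar k)\bigr)\,v_{\bar k},
\]
for all $\bar k\in\wb{G}$ and $v_{\bar k}\in\cV_{\bar k}$. Since $\chi\bigl(\xi(\bar k)\bigr)\in\FF^\times$, the map $\varphi$ is a $\wb{G}$-graded linear isomorphism.

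The remaining step is the verification that $\varphi$ is $\cR$-linear from $(\cV,\bullet)$ to $(\cV,*)$. Using the rewriting above,
\[
\varphi(r_g\bullet v_{\bar k})=\chi\bigl(\xi(\overline{gk})\bigr)\cdot\chi(g)\,\chi\bigl(\xi(\bar k)\bigr)\,\chi\bigl(\xi(\overline{gk})\bigr)^{-1}\,r_g\cdot v_{\bar k}=\chi(g)\,\chi\bigl(\xi(\bar k)\bigr)\,r_g\cdot v_{\bar k},
\]
while
\[
r_g*\varphi(v_{\bar k})=\alpha_\chi(r_g)\cdot\chi\bigl(\xi(\bar k)\bigr)\,v_{\bar k}=\chi(g)\,\chi\bigl(\xi(\bar k)\bigr)\,r_g\cdot v_{\bar k},
\]
so the two expressions coincide. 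Hence $\varphi$ is the desired $\wb{G}$-graded $\cR$-module isomorphism $\cV^\chi\to\cV^{\alpha_\chi}$.

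I do not anticipate any real obstacle: once one notices that the cocycle-like factor $\chi(g\xi(\bar k)\xi(\overline{gk})^{-1})$ splits as a coboundary because the extended $\chi$ is a genuine character of $G$, the correct rescaling $\varphi$ is forced, and everything reduces to a one-line check. The mild subtlety worth mentioning is that the isomorphism class of $\cV^\chi$ is independent of the chosen section (as shown just before the proposition), so it is legitimate to prove the statement after fixing $\xi$; a different choice of extension of $\chi$ to $G$ differs from the original by a character of $G/H\cong\wb{G}$, which only composes $\varphi$ with an automorphism of $\cV^{\alpha_\chi}$ coming from a shift by a character of $\wb{G}$, again not affecting the isomorphism class.
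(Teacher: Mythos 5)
Your proof is correct and uses exactly the same map $\varphi:v_{\bar k}\mapsto\chi(\xi(\bar k))\,v_{\bar k}$ and the same one-line verification as the paper. The extra remarks about independence of the section and of the chosen extension of $\chi$ are harmless but not needed, since the paper fixes $\xi$ and a particular extension before stating the result.
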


\begin{proof} 
Recall that the $\cR$-module structure on $\cV^{\alpha_\chi}$ is defined by $r_g\ast v=\chi(g)r_g\cdot v$, for all $g\in G$, $r_g\in\cR_g$ and $v\in\cV$. 
Consider the $\wb{G}$-graded linear isomorphism $\varphi:\cV\rightarrow \cV$, $v_{\bar g}\mapsto \chi\bigl(\xi(\bar g)\bigr)v_{\bar g}$ for all $v_{\bar g}\in\cV_{\bar g}$. Then, for any $g,k\in G$, $r_g\in \cR_g$ and $v_{\bar k}\in\cV_{\bar k}$, we get
\[
\begin{split}
\varphi(r_g\bullet v_{\bar k})
&=\chi\bigl(\xi(\overline{gk})\bigr)\chi\bigl(g\xi(\bar k)\xi(\overline{gk})^{-1}\bigr)r_g\cdot v_{\bar k}
	=\chi\bigl(g\xi(\bar k)\bigr)r_g\cdot v_{\bar k}\\
&=\bigl(\chi(g)r_g\bigr)\bigl(\chi(\xi(\bar k))v_{\bar k}\bigr)
	=\alpha_\chi(r_g)\cdot\varphi(v_{\bar k})=r_g\ast\varphi(v_{\bar k}),
\end{split}
\]
so $\varphi$ is an $\cR$-module isomorphism $\cV^\chi\rightarrow \cV^{\alpha_\chi}$.
\end{proof}

As in the proof of Theorem \ref{th:specializations}, given an object $(\cW,\cF)$ of $\frN(\pi)$ and a homomorphism of unital algebras $\rho:\cF\rightarrow \FF$, we may choose elements $c_h\in\cF$ for any $h\in H$ such that $\rho(c_h)=1$. Then $\cF=\bigoplus_{h\in H}\FF c_h$. Now, for any character $\chi$ of $H$, the map $\rho_\chi:\cF\rightarrow \FF$, $c_h\mapsto \chi(h)$, is another homomorphism of unital algebras. 
In fact, any homomorphism of unital algebras $\cF\rightarrow \FF$ has this form. 

\begin{proposition}\label{pr:twisted}
Let $(\cW,\cF)$ be an object in $\frN(\pi)$, $\rho:\cF\rightarrow \FF$ a homomorphism of unital algebras, and $\chi$ a character of $H$. Then the central image $\cW/\cW\ker(\rho_\chi)$ is $\wb{G}$-graded isomorphic to the central image $\cW/\cW\ker(\rho)$ twisted by the character $\chi$.
\end{proposition}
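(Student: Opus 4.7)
The plan is to realize both sides as canonical central images and write down an explicit $\wb G$-graded linear isomorphism that transports the twisted action to the ordinary one.

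First, I would fix the set-up. Since $\cF=\bigoplus_{h\in H}\FF c_h$ with $c_{h_1}c_{h_2}=c_{h_1h_2}$ and $\rho(c_h)=1$, the map $\rho_\chi$ defined by $\rho_\chi(c_h)=\chi(h)$ is indeed a well-defined homomorphism of unital algebras. Put $\cV\bydef\cW/\cW\ker(\rho)$ and $\cV'\bydef\cW/\cW\ker(\rho_\chi)$ with canonical $\rho$- and $\rho_\chi$-specializations $\gamma$ and $\gamma'$, which by Theorem \ref{th:specializations}(i) restrict, for every $g\in G$, to linear bijections $\gamma_g:\cW_g\to\cV_{\bar g}$ and $\gamma'_g:\cW_g\to\cV'_{\bar g}$. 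Since any central image is a canonical central image (Theorem \ref{th:specializations}(iii)), it is enough to prove $\cV^\chi\cong\cV'$.

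Next, fix the section $\xi:\wb G\to G$ used to define the twist. Define a $\wb G$-graded linear bijection $\varphi:\cV\to\cV'$ by
\[
\varphi\vert_{\cV_{\bar g}}\bydef \gamma'_{\xi(\bar g)}\circ\gamma_{\xi(\bar g)}^{-1},\qquad \bar g\in\wb G.
\]
I will show that $\varphi$ is an $\cR$-module homomorphism from $\cV^\chi=(\cV,\bullet)$ to $\cV'$. Pick $g,k\in G$, $r_g\in\cR_g$, and $v_{\bar k}\in\cV_{\bar k}$. Write $v_{\bar k}=\gamma(w)$ with $w\in\cW_{\xi(\bar k)}$, so that $\varphi(v_{\bar k})=\gamma'(w)$. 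Set $h\bydef\xi(\overline{gk})^{-1}g\,\xi(\bar k)\in H$, which is precisely the element appearing in the definition of the twist: $g\,\xi(\bar k)\,\xi(\overline{gk})^{-1}=h$. Since right multiplication by $c_h$ is a bijection $\cW_{\xi(\overline{gk})}\to\cW_{\xi(\overline{gk})h}=\cW_{g\xi(\bar k)}$, there exists a unique $y\in\cW_{\xi(\overline{gk})}$ with $r_gw=yc_h$.

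Finally I would verify the intertwining relation by direct computation. Using $\gamma(yc_h)=\gamma(y)\rho(c_h)=\gamma(y)$ and $\gamma'(yc_h)=\gamma'(y)\rho_\chi(c_h)=\chi(h)\gamma'(y)$, one gets
\[
r_g\cdot v_{\bar k}=\gamma(r_gw)=\gamma(yc_h)=\gamma(y),
\]
hence
\[
\varphi(r_g\bullet v_{\bar k})=\varphi\bigl(\chi(h)\,r_g\cdot v_{\bar k}\bigr)=\chi(h)\,\varphi(\gamma(y))=\chi(h)\,\gamma'(y),
\]
the last equality because $y\in\cW_{\xi(\overline{gk})}$ and $\varphi$ acts on $\cV_{\overline{gk}}$ as $\gamma'\circ\gamma^{-1}$. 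On the other hand
\[
r_g\cdot\varphi(v_{\bar k})=r_g\cdot\gamma'(w)=\gamma'(r_gw)=\gamma'(yc_h)=\chi(h)\,\gamma'(y),
\]
so $\varphi(r_g\bullet v_{\bar k})=r_g\cdot\varphi(v_{\bar k})$, and $\varphi:\cV^\chi\to\cV'$ is a $\wb G$-graded $\cR$-module isomorphism. There is no genuine obstacle; the only thing one must get right is the bookkeeping of the cocycle $h=\xi(\overline{gk})^{-1}g\,\xi(\bar k)$, which is exactly designed so that the scalar $\chi(h)$ produced by the twisted action matches the scalar $\rho_\chi(c_h)$ produced by the specialization $\gamma'$.
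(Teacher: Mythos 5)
Your proof is correct and is essentially the same argument as the paper's: the paper identifies both $\cV$ and $\cV'$ with $\bigoplus_{g\in\Theta}\cW_g$ via the specialization maps and reads off the two induced $\cR$-actions, while you package this as the explicit isomorphism $\varphi=\gamma'\circ\gamma^{-1}$ and verify the intertwining relation directly; the underlying cocycle computation with $h=\xi(\overline{gk})^{-1}g\xi(\bar k)$ is the same.
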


\begin{proof}
Fix a section $\xi:\wb{G}\rightarrow G$ of $\pi$ and let $\Theta=\xi(\wb{G})$. Then $\cW=\Bigl(\bigoplus_{g\in \Theta}\cW_g\Bigr)\oplus \cW\ker(\rho)$, so the quotient $\cV\bydef\cW/\cW\ker(\rho)$ can be identified with $\bigoplus_{\bar g\in\wb{G}}\cW_{\xi(\bar g)}$ as a $\wb{G}$-graded space. Consider the action of $\cR$ on $\bigoplus_{g\in\Theta}\cW_{g}$ that arises from this identification. For any $g\in G$, $k\in\Theta$, $r_g\in\cR_g$, and $w_k\in\cW_k$, we have $r_gw_k\in \cW_{gk}$. Now if $k'=\xi(\overline{gk})$ then $gk=k'h$ for a unique $h\in H$, and $r_gw_k\in\cW_{k'h}=\cW_{k'}c_h$. Thus $r_gw_k=w_{k'}c_h$ for a unique $w_{k'}\in\cW_{k'}$, and $r_gw_k=w_{k'}+w_{k'}(c_h-1)\in\cW_{k'}+\cW\ker(\rho)$. Hence, the action of $\cR$ on $\bigoplus_{g\in\Theta}\cW_{g}$ is given by
\begin{equation}\label{eq:rgwk}
r_g\cdot w_k=w_{k'}.
\end{equation}
In the same vein, if $\chi\in \wh{H}$ then $\cV'\bydef\cW/\cW\ker(\rho_\chi)$ can be identified again with $\bigoplus_{g\in\Theta}\cW_g$, but now, for $g$, $k$, $k'$, $h$, $r_g$ and $w_k$ as above, we have
\[
r_gw_k=w_{k'}c_h=\chi(h)w_{k'}+w_{k'}(c_h-\chi(h)1)\in\cW_{k'}+\cW\ker(\rho_\chi),
\]
so the $\cR$-action on $\bigoplus_{g\in \Theta}\cW_g$ is given by 
\[
r_g\bullet w_k=\chi(h)w_{k'}=\chi(h)r_g\cdot w_k=\chi\bigl(g\xi(\bar k)\xi(\overline{gk})^{-1}\bigr)r_g\cdot w_k.
\]
The result follows.
\end{proof}

\begin{corollary}\label{co:ci_as_twists}
\begin{romanenumerate}
\item Let $(\cW,\cF)$ be an object in $\frN(\pi)$, $\rho:\cF\rightarrow \FF$ a homomorphism of unital algebras, and $\cV=\cW/\cW\ker(\rho)$ the corresponding canonical central image. Then the central images of $(\cW,\cF)$ are, up to $\wb{G}$-graded isomorphisms, the modules $\cV^\chi$ for $\chi\in \wh{H}$.

\item If the characters of $H$ extend to $G$ (in particular, if $\FF$ is algebraically closed), then the central images of $(\cW,\cF)$ are the modules $\cV^{\alpha_\chi}$ obtained by twisting the canonical central image $\cV$ by the automorphisms $\alpha_\chi$ of $\cR$, $\chi\in \wh{H}$, as in Equation \eqref{eq:alpha_chi}.
\end{romanenumerate}
\end{corollary}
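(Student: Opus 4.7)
The plan is to combine Theorem~\ref{th:specializations} (which says every central image is, up to isomorphism, a canonical central image $\cW/\cW\ker(\rho')$), with the classification of unital algebra homomorphisms $\cF\to\FF$ as characters of $H$, and then apply Propositions~\ref{pr:twisted} and \ref{pr:Vxi}.

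First, for part (i), let $\gamma:\cW\to\cV'$ be any $\rho'$-specialization of $(\cW,\cF)$ for some unital algebra homomorphism $\rho':\cF\to\FF$; by Theorem~\ref{th:specializations}(iii), $\cV'$ is $\wb{G}$-graded isomorphic to $\cW/\cW\ker(\rho')$. So it suffices to classify, up to $\wb{G}$-graded isomorphism, the canonical central images $\cW/\cW\ker(\rho')$ as $\rho'$ varies. Using the isomorphism $\cF\cong\FF H$ and the elements $c_h\in\cF_h$ with $\rho(c_h)=1$ (so $c_{h_1}c_{h_2}=c_{h_1h_2}$), I will observe that any unital algebra homomorphism $\rho':\cF\to\FF$ is determined by the values $\rho'(c_h)$, and that the map $\chi(h):=\rho'(c_h)$ is a character of $H$ (since $\rho'$ is multiplicative and $c_{h_1}c_{h_2}=c_{h_1h_2}$). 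Conversely, every character $\chi\in\wh{H}$ defines such a homomorphism $\rho_\chi$ by $\rho_\chi(c_h)=\chi(h)$, and clearly $\rho_1=\rho$. Then Proposition~\ref{pr:twisted} identifies $\cW/\cW\ker(\rho_\chi)$ with $\cV^\chi$ as $\wb{G}$-graded left $\cR$-modules, proving (i).

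For part (ii), under the hypothesis that every character of $H$ extends to a character of $G$ (automatic when $\FF$ is algebraically closed, because $\FF^\times$ is then a divisible abelian group), Proposition~\ref{pr:Vxi} gives a $\wb{G}$-graded $\cR$-module isomorphism $\cV^\chi\cong\cV^{\alpha_\chi}$. Combining this with (i) yields (ii).

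I expect no real obstacle here; the statement is essentially an assembly of previously established facts. The only minor care needed is in verifying that the correspondence $\rho'\leftrightarrow\chi$ is well-posed and that the choice $\rho(c_h)=1$ indeed forces $c_{h_1}c_{h_2}=c_{h_1h_2}$ (and hence makes the multiplicativity condition on $\rho'$ equivalent to $\chi$ being a character), but this is already implicit in the proof of Theorem~\ref{th:specializations}.
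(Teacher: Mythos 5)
Your plan is correct and coincides with the argument the paper intends: the corollary is stated without a separate proof, being an immediate assembly of Theorem~\ref{th:specializations}(iii), the remark preceding Proposition~\ref{pr:twisted} (that every unital algebra homomorphism $\cF\to\FF$ has the form $\rho_\chi$ for a unique $\chi\in\wh{H}$), Proposition~\ref{pr:twisted}, and Proposition~\ref{pr:Vxi}. The one point you flag for care — that normalizing $\rho(c_h)=1$ forces $c_{h_1}c_{h_2}=c_{h_1h_2}$ and hence identifies homomorphisms $\cF\to\FF$ with characters of $H$ — is indeed established in the proof of Theorem~\ref{th:specializations} and reiterated just before Proposition~\ref{pr:twisted}, so the reasoning closes.
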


The next result summarizes most of the work of this section in the language of groupoids $\frM(\pi)$ and $\frN(\pi)$. If $\cV$ is an object of $\frM(\pi)$, then, by Proposition \ref{pr:VLpV}, $\bigl(L_\pi(\cV),L_\pi(\FF 1)\bigr)$ is an object of $\frN(\pi)$. Moreover, if $\varphi:\cV\rightarrow \cV'$ is a morphism in $\frM(\pi)$, then $\bigl(L_\pi(\varphi),\iota\bigr)$ is a morphism in $\frN(\pi)$, where $L_\pi(\varphi)(v_{\bar g}\otimes g)=\varphi(v_{\bar g})\otimes g$, for all $g\in G$ and $v_{\bar g}\in \cV_{\bar g}$, and $\iota$ is the identity map on $L_\pi(\FF 1)$. Thus we obtain the \emph{loop functor} $L_\pi:\frM(\pi)\rightarrow \frN(\pi)$. 

\begin{theorem}\label{th:MpiNpi}
The loop functor $L_\pi:\frM(\pi)\rightarrow \frN(\pi)$ has the following properties:
\begin{romanenumerate}
\item $L_\pi$ is faithful, that is, injective on the set of morphisms $\cV\to\cV'$, for any objects $\cV$ and $\cV'$ in $\frM(\pi)$.

\item $L_\pi$ is essentially surjective, that is, any object $(\cW,\cF)$ in $\frN(\pi)$ is isomorphic to $\bigl(L_\pi(\cV),L_\pi(\FF 1)\bigr)$ for some object $\cV$ in $\frM(\pi)$. The objects $\cV$ in $\frM(\pi)$ with this property are, up to isomorphism in $\frM(\pi)$, the central images of $(\cW,\cF)$.

\item If $\cV$ and $\cV'$ are objects in $\frM(\pi)$ such that their images under $L_\pi$ are isomorphic in $\frN(\pi)$, then there is a character $\chi\in \wh{H}$ such that $\cV'$ is isomorphic to $\cV^\chi$ in $\frM(\pi)$.
\end{romanenumerate}
\end{theorem}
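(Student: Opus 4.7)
I would treat (i), (ii), and (iii) in order, with (ii) carrying the main technical content. Part (i) is essentially immediate from the formula defining $L_\pi$ on morphisms: if $\varphi_1,\varphi_2:\cV\to\cV'$ are morphisms in $\frM(\pi)$ with $L_\pi(\varphi_1)=L_\pi(\varphi_2)$, then evaluating both sides on $v_{\bar g}\otimes g$ for arbitrary homogeneous $v_{\bar g}\in\cV_{\bar g}$ forces $\varphi_1(v_{\bar g})=\varphi_2(v_{\bar g})$, and hence $\varphi_1=\varphi_2$.

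For (ii) I would proceed in two steps. Essential surjectivity: given $(\cW,\cF)\in\frN(\pi)$, pick the augmentation $\rho:\cF\to\FF$ (which exists because $\cF\cong \FF H$ as $G$-graded algebras) and form the canonical central image $\cV=\cW/\cW\ker(\rho)$. Theorem \ref{th:specializations}(v) and (iv) then deliver, respectively, that $\cV\in\frM(\pi)$ and that $\bigl(L_\pi(\cV),L_\pi(\FF 1)\bigr)\cong(\cW,\cF)$ in $\frN(\pi)$. For the characterization of the $\cV\in\frM(\pi)$ with $L_\pi(\cV)\cong(\cW,\cF)$ as the central images of $(\cW,\cF)$ up to isomorphism, one direction is again Theorem \ref{th:specializations}(iv) applied to any central image. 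For the converse, given an isomorphism $(\phi,\psi):L_\pi(\cV)\to(\cW,\cF)$ in $\frN(\pi)$, I would let $\tilde\rho:L_\pi(\FF 1)\to\FF$ be the augmentation, define $\rho:=\tilde\rho\circ\psi^{-1}$, and consider the natural projection $\omega:L_\pi(\cV)\to\cV$, $v_{\bar g}\otimes g\mapsto v_{\bar g}$. One verifies that $\omega$ is a $\tilde\rho$-specialization (its kernel coincides with $L_\pi(\cV)\ker(\tilde\rho)$), whence the composite $\gamma:=\omega\circ\phi^{-1}:\cW\to\cV$ becomes a $\rho$-specialization; the defining identity $\gamma(wc)=\gamma(w)\rho(c)$ uses the equivariance condition $\phi(\xi d)=\phi(\xi)\psi(d)$ built into the definition of morphisms in $\frN(\pi)$.

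For (iii), set $(\cW,\cF):=L_\pi(\cV)$. The construction above, specialized to $(\phi,\psi)=(\id,\id)$, identifies $\cV$ itself with a canonical central image of $(\cW,\cF)$, namely the one associated to the augmentation of $L_\pi(\FF 1)$. Since $L_\pi(\cV')\cong(\cW,\cF)$ in $\frN(\pi)$, part (ii) gives that $\cV'$ is $\wb G$-graded isomorphic to some central image of $(\cW,\cF)$, and Corollary \ref{co:ci_as_twists}(i) then immediately yields that $\cV'\cong\cV^\chi$ for some $\chi\in\wh{H}$. The main obstacle in the whole argument is the converse direction in (ii), specifically the verification that $\omega$ kills $L_\pi(\cV)\ker(\tilde\rho)$ and that $\gamma$ inherits the $\cR$-linearity, $\wb G$-gradedness, and $\rho$-compatibility required of a specialization; everything else is bookkeeping once Theorem \ref{th:specializations} and Corollary \ref{co:ci_as_twists} are in place.
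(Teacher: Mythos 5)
Your proposal is correct and follows essentially the same route as the paper's own proof: part (i) by the injectivity/surjectivity of the map $\omega:L_\pi(\cV)\to\cV$ (you phrase it as direct evaluation on $v_{\bar g}\otimes g$, which amounts to the same thing), part (ii) by taking the canonical central image and invoking Theorem~\ref{th:specializations}(iv),(v) for one direction and the observation that a specialization composed with a morphism in $\frN(\pi)$ yields a specialization of the other object for the converse, and part (iii) by reducing to Corollary~\ref{co:ci_as_twists}(i). One small imprecision: for $\omega$ to be a $\tilde\rho$-specialization one only needs surjectivity, $\cR$-linearity, $\wb G$-gradedness and $\rho$-compatibility; the claim that $\ker\omega=L_\pi(\cV)\ker(\tilde\rho)$ is a consequence (Theorem~\ref{th:specializations}(iii)), not a hypothesis to verify.
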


\begin{proof}
The map $\epsilon:L_\pi(\FF 1)\rightarrow \FF$ defined by $\epsilon(1\otimes h)=1$ for all $h\in H$ (the augmentation map of $\FF H\simeq L_\pi(\FF 1)$) is a homomorphism of unital algebras, and, for any object $\cV$ in $\frM(\pi)$, the corresponding canonical central image $L_\pi(\cV)/L_\pi(\cV)\ker(\epsilon)$ is $\wb{G}$-graded isomorphic to $\cV$. This is easy to see directly, or we can invoke Theorem \ref{th:specializations}(iii), since $\omega:L_\pi(\cV)\rightarrow \cV$, $v_{\bar g}\otimes g\mapsto v_{\bar g}$, is an $\epsilon$-specialization. Moreover, if $\varphi:\cV\to\cV'$ is a morphism in $\frM(\pi)$ then the following diagram commutes:
\[
\xymatrix{
L_\pi(\cV)\ar[r]^{L_\pi(\varphi)}\ar[d]_\omega&L_\pi(\cV')\ar[d]^{\omega'}\\
\cV\ar[r]^\varphi&\cV'
}
\]
Since $\omega$ is surjective, part (i) follows.

For (ii), let $(\cW,\cF)$ be an object in $\frN(\pi)$. Pick a homomorphism of unital algebras $\rho:\cF\to\FF$ and let $\cV$ be the corresponding canonical central image of $(\cW,\cF)$. Then, by Theorem \ref{th:specializations}(iv,v), $\cV$ is an object in $\frM(\pi)$ and there exists an isomorphism $(\cW,\cF)\rightarrow \bigl(L_\pi(\cV),L_\pi(\FF 1)\bigr)$ in $\frN(\pi)$. In fact, this argument works for any central image $\cV$ of $(\cW,\cF)$. For the converse, observe that, 
if $(\phi,\psi)$ is an isomorphism $(\cW,\cF)\rightarrow (\cW',\cF')$ in $\frN(\pi)$, $\rho':\cF'\rightarrow \FF$ is a homomorphism of unital algebras, and $\gamma':\cW'\rightarrow \cV'$ is a $\rho'$-specialization of $(\cW',\cF')$, then clearly $\gamma'\phi$ is a $(\rho'\psi)$-specialization of $(\cW,\cF)$.
In particular, if $(\phi,\psi)$ is an isomorphism $(\cW,\cF)\rightarrow\bigl(L_\pi(\cV),L_\pi(\FF 1)\bigr)$ then $\gamma=\omega\phi:\cW\rightarrow\cV$ is a $\rho$-specialization of $(\cW,\cF)$ for $\rho=\epsilon\psi$. Hence $\cV$ is a central image of $(\cW,\cF)$, and the proof of part (ii) is complete.

Finally, for (iii), it remains to invoke Corollary \ref{co:ci_as_twists}(i).
\end{proof}

\begin{remark}\label{rm:classification}
If $\FF$ is algebraically closed, we obtain a classification of $G$-graded central simple modules (which include all $G$-graded simple modules of dimension strictly less than the cardinality of $\FF$) up to isomorphism as follows. By Proposition \ref{pr:Falgclosed}, the centralizer of any such module contains a maximal graded subfield $\cF$ isomorphic to $\FF H$ for some subgroup $H\le G$. We partition all $G$-graded central simple modules according to the graded isomorphism class of their centralizer and, for each class, make a choice of $\cF$ (equivalently, of $H$) and let $\pi:G\to\wb{G}=G/H$ be the natural homomorphism. Then Theorem \ref{th:MpiNpi} implies, taking into account Remark \ref{rm:groupoids} and Corollary \ref{co:ci_as_twists}, that the loop functor $L_\pi$ gives a bijection between the  isomorphism classes of $G$-graded central simple modules with a fixed centralizer and the classes of central simple modules, equipped with a $\wb{G}$-grading whose associated $G$-pregrading is thin, under $\wb{G}$-graded isomorphism and twist through the action of $\wh{G}$. 
\end{remark}

Theorem \ref{th:MpiNpi} can be strengthened as follows. First, we define a groupoid $\wt{\frM}(\pi)$ by extending $\frM(\pi)$: keep the same objects, 
but for the morphisms $\cV\to \cV'$ take all pairs $(\varphi,\chi)$ where $\chi\in\wh{H}$ and $\varphi: V^\chi\to V'$ is a morphism in $\frM(\pi)$. Then, we extend the loop functor to $\wt{L}_\pi:\wt{\frM}(\pi)\rightarrow \frN(\pi)$ as follows: define $\wt{L}_\pi(\cV)=\bigl(L_\pi(\cV),L_\pi(\FF 1)\bigr)$ for objects (the same as before) and send a morphism $(\varphi,\chi)$ as above to the pair $(\phi,\psi)$ where $\phi(v_{\bar g}\otimes gh)=\chi(h)\varphi(v_{\bar g})\otimes gh$ for all $v_{\bar g}\in\cV_{\bar g}$, $g\in\Theta$ and $h\in H$ (this takes place of $L_\pi(\varphi)$ in the previous construction of the loop functor) and $\psi(1\otimes h)=\chi(h)1\otimes h$ (this takes place of the identity map $\iota$ in the previous construction), where $\Theta$ is a fixed transversal of $H$ in $G$. It is straightforward to verify $(\phi,\psi)$ thus defined is indeed a morphism in $\frN(\pi)$, that is, $\phi:L_\pi(\cV)\to L_\pi(\cV')$ is an isomorphism of $G$-graded $\cR$-modules and $\phi(v_{\bar g}\otimes ghh')=\phi(v_{\bar g}\otimes gh)\psi(1\otimes h')$ for all $v_{\bar g}\in\cV_{\bar g}$, $g\in\Theta$ and $h,h'\in H$.

\begin{theorem}\label{th:equiv_cat}
The extended loop functor $\wt{L}_\pi:\wt{\frM}(\pi)\rightarrow \frN(\pi)$ is an equivalence of categories.
\end{theorem}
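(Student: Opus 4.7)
The plan is to verify the three defining properties of an equivalence of categories: essential surjectivity, faithfulness, and fullness. Essential surjectivity is immediate because $\wt{\frM}(\pi)$ has the same objects as $\frM(\pi)$ and $\wt{L}_\pi$ acts on objects exactly as $L_\pi$ does, so Theorem \ref{th:MpiNpi}(ii) applies verbatim.

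For faithfulness, I would argue as follows. Suppose $\wt{L}_\pi(\varphi_1,\chi_1)=\wt{L}_\pi(\varphi_2,\chi_2)=(\phi,\psi)$. The algebra component $\psi$ satisfies $\psi(1\otimes h)=\chi_i(h)(1\otimes h)$, so equality of the $\psi$'s forces $\chi_1=\chi_2=\chi$. Then evaluating the module component on an element $v_{\bar g}\otimes g$ with $g\in\Theta$ gives $\varphi_1(v_{\bar g})\otimes g=\varphi_2(v_{\bar g})\otimes g$, hence $\varphi_1=\varphi_2$ on each homogeneous component of $\cV$, and thus $\varphi_1=\varphi_2$.

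For fullness, which is the main step, take an arbitrary morphism $(\phi,\psi):\wt{L}_\pi(\cV)\rightarrow\wt{L}_\pi(\cV')$ in $\frN(\pi)$. First analyze $\psi$: since it is a $G$-graded algebra isomorphism of $L_\pi(\FF 1)\cong \FF H$ onto itself and each homogeneous component is one-dimensional, $\psi(1\otimes h)=\mu(h)(1\otimes h)$ with $\mu(h)\in\FF^\times$, and multiplicativity forces $\mu$ to be a character $\chi\in\wh{H}$. Next, for each $g\in\Theta$, define $\varphi:\cV_{\bar g}\rightarrow\cV'_{\bar g}$ by the rule $\phi(v_{\bar g}\otimes g)=\varphi(v_{\bar g})\otimes g$ (possible because $\phi$ is $G$-graded). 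Use the compatibility $\phi(w c)=\phi(w)\psi(c)$ with $c=1\otimes h$ to verify that $\phi(v_{\bar g}\otimes gh)=\chi(h)\varphi(v_{\bar g})\otimes gh$ for all $h\in H$, matching the formula defining $\wt{L}_\pi$ on morphisms. It remains to check that $\varphi:\cV^\chi\rightarrow\cV'$ is a morphism in $\frM(\pi)$, i.e.\ a $\wb{G}$-graded $\cR$-linear bijection.

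The key computation, and the main technical point, is intertwining: for $r_{g'}\in\cR_{g'}$, $v_{\bar g}\in\cV_{\bar g}$ with $g\in\Theta$, write $g'g=kh$ with $k=\xi(\overline{g'g})\in\Theta$ and $h=k^{-1}g'g\in H$. Applying $\phi$ to both sides of $r_{g'}(v_{\bar g}\otimes g)=r_{g'}v_{\bar g}\otimes kh$ and using $\cR$-linearity of $\phi$ on the right and the formula above on the left yields
\[
r_{g'}\varphi(v_{\bar g})\otimes kh=\chi(h)\varphi(r_{g'}v_{\bar g})\otimes kh,
\]
so $\varphi(r_{g'}v_{\bar g})=\chi(h)^{-1}r_{g'}\varphi(v_{\bar g})$. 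Comparing with the definition of the twisted action $r_{g'}\bullet v_{\bar g}=\chi(g'\xi(\bar g)\xi(\overline{g'g})^{-1})r_{g'}v_{\bar g}=\chi(h)r_{g'}v_{\bar g}$ on $\cV^\chi$ gives $\varphi(r_{g'}\bullet v_{\bar g})=r_{g'}\varphi(v_{\bar g})$, which is exactly $\cR$-linearity of $\varphi:\cV^\chi\rightarrow\cV'$. Bijectivity and $\wb{G}$-gradedness of $\varphi$ follow from the corresponding properties of $\phi$. The hardest part is bookkeeping the twist cocycle through the transversal $\Theta$; once $\chi$ is correctly extracted from $\psi$, the compatibility falls out cleanly.
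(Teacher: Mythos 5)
Your proof is correct, and the fullness step takes a genuinely different route from the paper's. You establish $\cR$-linearity of $\varphi:\cV^\chi\to\cV'$ by a direct element-wise computation: write $g'g=kh$ with $k\in\Theta$, $h\in H$, note the twist cocycle $\chi\bigl(g'\xi(\bar g)\xi(\overline{g'g})^{-1}\bigr)=\chi(h)$, and unwind $\phi(r_{g'}(v_{\bar g}\otimes g))$ two ways. The paper instead leans on the machinery of $\rho$-specializations: it observes that $\gamma\bydef\omega'\phi$ and $\omega_\chi$ are both $\epsilon_\chi$-specializations of $L_\pi(\cV)$, so Theorem~\ref{th:specializations}(iii) produces a unique $\wb{G}$-graded isomorphism $\varphi:\cV^\chi\to\cV'$ with $\gamma=\varphi\omega_\chi$, and one checks this forces $(\phi,\psi)=\wt{L}_\pi(\varphi,\chi)$. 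Your approach is more self-contained (it does not need to recall the $\rho$-specialization framework and its uniqueness statement), at the cost of the cocycle bookkeeping you flag yourself; the paper's approach reuses infrastructure already built and makes the logical dependence on Theorem~\ref{th:specializations} visible, which is arguably the cleaner way to present it given that theorem has already been proved. Your faithfulness argument is likewise a direct comparison of formulas rather than the paper's commutative-diagram argument via the surjective maps $\omega_\chi$; both are sound, and the computations you carry out are correct.
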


\begin{proof}
We already know that the functor $\wt{L}_\pi$ is essentially surjective. It remains to show that it is full and faithful, that is, gives a bijection from the set of morphisms $\cV\to\cV'$ in $\wt{\frM}(\pi)$ onto the set of morphisms $\bigl(L_\pi(\cV),L_\pi(\FF 1)\bigr)\to\bigl(L_\pi(\cV'),L_\pi(\FF 1)\bigr)$ in $\frN(\pi)$. 

As in the proof of Theorem \ref{th:MpiNpi}, consider the homomorphism of unital algebras $\epsilon:L_\pi(\FF 1)\to\FF$ and the $\epsilon$-specialization $\omega':L_\pi(\cV')\to\cV'$. Now, for any $\chi\in\wh{H}$, define a homomorphism of unital algebras $\epsilon_\chi:L_\pi(\FF 1)\to\FF$ by $1\otimes h\mapsto\chi(h)$ for all $h\in H$ (the linear extension of $\chi$ to $\FF H\simeq L_\pi(\FF 1)$) and also a linear map $\omega_\chi:L_\pi(\cV)\to\cV^\chi$, $v_{\bar g}\otimes gh\mapsto\chi(h)v_{\bar g}$, for all $v_{\bar g}\in\cV_{\bar g}$, $g\in\Theta$ and $h\in H$, which is easily seen to be an $\epsilon_\chi$-specialization (cf. the proof of Proposition \ref{pr:twisted}). 

For a given morphism $(\varphi,\chi):\cV\to\cV'$ in $\wt{\frM}(\pi)$, the definition of $(\phi,\psi)=\wt{L}_\pi(\varphi,\chi)$ implies the commutativity of the following diagram:
\[
\xymatrix{
L_\pi(\cV)\ar[r]^{\phi}\ar[d]_{\omega_\chi}&L_\pi(\cV')\ar[d]^{\omega'}\\
\cV^\chi\ar[r]^\varphi&\cV'
}
\]
In fact, $\phi$ is the unique $G$-graded isomorphism that makes the above diagram commute, because $\omega_\chi$ is injective on each homogeneous component.
Since $\omega_\chi$ is surjective, $\varphi$ is also uniquely determined by $\phi$, while $\chi$ is uniquely determined by $\psi$. Therefore, the functor $\wt{L}_\pi$ is faithful.

Finally, suppose a morphism $(\phi,\psi):\bigl(L_\pi(\cV),L_\pi(\FF 1)\bigr)\to\bigl(L_\pi(\cV'),L_\pi(\FF 1)\bigr)$ in $\frN(\pi)$ is given. 
Since $L_\pi(\FF 1)\simeq\FF H$ as $G$-graded algebras, $\psi$ must have the form $\psi(1\otimes h)=\chi(h)1\otimes h$ for some $\chi\in\wh{H}$. 
Clearly, $\epsilon_\chi=\epsilon\psi$ and hence $\gamma\bydef\omega'\phi$ is an $\epsilon_\chi$-specialization $L_\pi(\cV)\to\cV'$. Also, as mentioned above, $\omega_\chi$ is an $\epsilon_\chi$-specialization $L_\pi(\cV)\to\cV^\chi$. By Theorem \ref{th:specializations}(iii), $\gamma$ and $\omega_\chi$ induce $\wb{G}$-graded isomorphisms with the canonical central image $L_\pi(\cV)/L_\pi(\cV)\ker(\epsilon_\chi)$, hence there exists a $\wb{G}$-graded isomorphism  $\varphi:\cV^\chi\to\cV'$ such that $\gamma=\varphi\omega_\chi$. By construction, this $\varphi$ makes the above diagram commute, which implies that $(\phi,\psi)=\wt{L}_\pi(\varphi,\chi)$, proving that the functor $\wt{L}_\pi$ is full.
\end{proof}


\section{Graded simple modules with finite-dimensional centralizers}\label{se:fin_dim_cent}

\emph{Assume in this section that $H$ is finite, $\FF$ is algebraically closed and its characteristic does not divide $\lvert H\rvert =n$.}

Under these assumptions, the group of characters of $H$ contains precisely $n$ elements: $\wh{H}=\{\chi_1,\chi_2,\ldots,\chi_n\}$, where we choose $\chi_1$ to be the trivial character.  
Moreover, the group algebra $\FF H$ is semisimple, so $\FF H=\FF \zeta_1\oplus\cdots\oplus \FF \zeta_n$, where $\zeta_1,\ldots,\zeta_n$ are orthogonal primitive idempotents with $\chi_i(\zeta_j)=\delta_{ij}$ for $1\leq i,j\leq n$, where we have extended the characters of $H$ to homomorphisms $\FF H\rightarrow \FF$.

Let $(\cW,\cF)$ be an object of $\frN(\pi)$ and let $\rho:\cF\rightarrow \FF$ be a homomorphism of unital algebras. Then the central image $\cW/\cW\ker(\rho)$ is an object in $\frM(\pi)$ so, in particular, it is simple, that is, $\cW\ker(\rho)$ is a maximal (ungraded) submodule of $\cW$.
The homomorphisms of unital algebras $\cF\rightarrow \FF$ are precisely $\rho=\rho_{\chi_1},\rho_{\chi_2},\ldots,\rho_{\chi_n}$.

\begin{theorem}\label{th:Wcomp.red.} 
Let $(\cW,\cF)$ be an object of $\frN(\pi)$. Then $\cW$ is a completely reducible module and there is a decomposition $\cW=\bigoplus_{i=1}^n\cV^i$ such that, for any $i=1,\ldots,n$, $\cV^i$ is an object of $\frM(\pi)$ isomorphic to $\cW/\cW\ker(\rho_{\chi_i})$.
\end{theorem}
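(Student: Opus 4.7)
The plan is to decompose $\cW$ using the idempotent structure of $\cF$. Since $\cF\cong\FF H$ as $G$-graded algebras and $\FF H$ splits as $\bigoplus_{i=1}^n\FF\zeta_i$ with orthogonal primitive idempotents satisfying $\chi_i(\zeta_j)=\delta_{ij}$, I transport the $\zeta_i$ into $\cF\subseteq C(\cW)$. Setting $\cV^i\bydef\cW\zeta_i$, the fact that each $\zeta_i\in C(\cW)$ is $\cR$-linear together with $1=\sum_i\zeta_i$ and $\zeta_i\zeta_j=\delta_{ij}\zeta_i$ immediately yields $\cW=\bigoplus_{i=1}^n\cV^i$ as left $\cR$-submodules.

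Each $\zeta_i$ lies in $\bigoplus_{h\in H}C(\cW)_h$, so it is a sum of homogeneous endomorphisms whose degrees all lie in $H$. Consequently, right multiplication by $\zeta_i$ preserves the $\wb{G}$-grading on $\cW$ (induced from the $G$-grading via $\pi$), and therefore $\cV^i$ inherits from $\cW$ the structure of a $\wb{G}$-graded $\cR$-module.

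To match $\cV^i$ with the canonical central image, observe that under the identification $\cF\cong\FF H$ we have $\ker(\rho_{\chi_i})=\bigoplus_{j\ne i}\FF\zeta_j=\cF(1-\zeta_i)$, so $\cW\ker(\rho_{\chi_i})=\cW(1-\zeta_i)=\bigoplus_{j\ne i}\cV^j$. The projection $\cW\to\cW/\cW\ker(\rho_{\chi_i})$ therefore restricts to a $\wb{G}$-graded isomorphism of $\cR$-modules $\cV^i\to\cW/\cW\ker(\rho_{\chi_i})$. By Theorem \ref{th:specializations}(v), the right-hand side is an object of $\frM(\pi)$, so the same holds for $\cV^i$; in particular, each $\cV^i$ is simple as an ungraded module, which exhibits $\cW$ as a finite direct sum of simple submodules and proves complete reducibility.

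The only subtlety worth highlighting is that although $\zeta_i$ is not $G$-homogeneous, it is a sum of homogeneous terms whose degrees lie in $H$, so multiplication by $\zeta_i$ is compatible only with the $\wb{G}$-grading; this explains why each $\cV^i$ is naturally a $\wb{G}$-graded rather than a $G$-graded submodule of $\cW$, in accordance with its membership in $\frM(\pi)$. Beyond this, the argument is a direct application of the semisimple decomposition of $\FF H$ combined with Theorem \ref{th:specializations}.
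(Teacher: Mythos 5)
Your proof is correct and rests on the same essential idea as the paper's — the semisimplicity of $\cF\cong\FF H$ — but you carry it out via the idempotent decomposition $\cF=\bigoplus_{i=1}^n\FF\zeta_i$, whereas the paper works dually with the maximal ideals $\ker(\rho_{\chi_i})$, proving $\bigcap_i\cW\ker(\rho_{\chi_i})=0$ (using that $\cW$ is free over $\cF$) and then invoking the Chinese Remainder Theorem. The two mechanisms are equivalent, but yours has the advantage of producing the internal direct sum $\cW=\bigoplus_i\cW\zeta_i$ explicitly rather than obtaining an external direct sum up to isomorphism, and it foreshadows the later decomposition $\cW=\bigoplus_i\cW\veps_i$ of Equation~\eqref{eq:WWeis} built from the central idempotents. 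Your observation that each $\zeta_i$ is only $\wb{G}$-homogeneous (not $G$-homogeneous), so that $\cW\zeta_i$ is a $\wb{G}$-graded but not $G$-graded submodule, is exactly the right point to flag. Both your proof and the paper's conclude by applying Theorem~\ref{th:specializations}(v) to recognize each summand as an object of $\frM(\pi)$, hence simple, giving complete reducibility.
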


\begin{proof}
Clearly, $\ker \chi_1\cap \ldots\cap \ker\chi_n=0$ and, since $\cW$ is a free $\cF$-module,
this shows that $\cW\ker(\rho_{\chi_1}),\ldots,\cW\ker(\rho_{\chi_n})$ are distinct maximal submodules of $\cW$ with 
\[
\cW\ker(\rho_{\chi_1})\cap \cdots\cap \cW\ker(\rho_{\chi_n})=0.
\]
Now the Chinese Remainder Theorem shows that $\cW$ is isomorphic to the direct sum 
\[
\cW/\cW\ker(\rho_{\chi_1})\oplus\cdots\oplus\cW/\cW\ker(\rho_{\chi_n})
\]
as a $\wb{G}$-graded module.
\end{proof}

Note that since $\chi_1=1$, Proposition \ref{pr:twisted} shows that $\cV^i$ is isomorphic to the twisted module $(\cV^1)^{\chi_i}$. Let us reorder the simple submodules $\cV^i$ in Theorem \ref{th:Wcomp.red.}, and hence the characters $\chi_i$, so that $\{\cV^1,\ldots,\cV^m\}$ is a set of representatives of the isomorphism classes of the simple (ungraded) submodules of $\cW$, with $\chi_1=1$. Therefore, there are natural numbers $n_1,\ldots,n_m$ such that $\cW$ is isomorphic (as an ungraded module) to $n_1\cV^1\oplus\cdots \oplus n_m\cV^m$, where $n_i\cV^i$ denotes the direct sum of $n_i$ copies of $\cV^i$. Being objects of $\frM(\pi)$, the modules $\cV^i$ are central, hence the centralizer $\cD=C(\cW)$ is isomorphic to $M_{n_1}(\FF)\times\cdots\times M_{n_m}(\FF)$. In particular, $\dim C(\cW)=n_1^2+\cdots +n_m^2$ is finite.

Since $\FF$ is assumed to be algebraically closed, the finite-dimensional division algebra $\cD_e$ is just $\FF 1$ (in other words, $\cW$ is graded central), and for any $g\in T=\supp(\cD)$, $\dim \cD_g=1$ (see the proof of Theorem \ref{th:graded_central}). Hence, $\cF$ is determined by its support $H$. We can write $\cF=\bigoplus_{h\in H}\cD_h=\bigoplus_{h\in H}\FF c_h$ where the $c_h\in\cD_h$ are scaled to satisfy $\rho(c_h)=1$ for all $h\in H$.

\begin{remark} Under the hypotheses of this section, the canonical central images $\cW/\cW\ker(\rho_{\chi_i})$, $i=1,\ldots,n$, are precisely the simple quotients of $\cW$. Thus, as ungraded modules, the central images of $(\cW,\cF)$ do not depend on the choice of the maximal graded subfield $\cF$, or, equivalently, of the subgroup $H$. 
\end{remark}

We observe that the center $\cZ=Z(\cD)$ has dimension $m$. Clearly, $\cZ$ is a graded subalgebra of $\cD$, and by maximality of $\cF$, $\cZ$ is contained in $\cF$. Denote $Z=\supp(\cZ)$, so $Z\subseteq H$ and $\cZ=\bigoplus_{z\in Z}\cD_z=\bigoplus_{z\in Z}\FF c_z$ is isomorphic to the group algebra $\FF Z$.

\smallskip

Some of the main properties of the centralizer $\cD=C(\cW)$ will be derived from the next result, which is of independent interest.

\begin{proposition}\label{pr:DTHbeta}
Let $\cD$ be a finite-dimensional $G$-graded division algebra over an algebraically closed field $\FF$. Let $T$ be the support of $\cD$, let $H$ be the support of a maximal graded subfield of $\cD$, and let $Z$ be the support of the center $\cZ=Z(\cD)$.
\begin{romanenumerate}
\item $\cD$ is isomorphic to a twisted group algebra $\FF^\sigma T$, where $\sigma:T\times T\rightarrow \FF^\times$ is a $2$-cocycle such that $\sigma(h_1,h_2)=1$ for all $h_1,h_2\in H$.

\item The map $\beta:T\times T\rightarrow \FF^\times$, defined by $\beta(t_1,t_2)=\sigma(t_1,t_2)\sigma(t_2,t_1)^{-1}$, is an alternating bicharacter of $T$, and $H$ is a maximal isotropic subgroup of $T$ (that is, $H$ is a maximal subgroup such that $\beta(h_1,h_2)=1$ for all $h_1,h_2\in H$). Moreover, the radical of $\beta$ is $Z$.

\item The map 
\[
\begin{split}
\tilde\beta:T&\longrightarrow \wh{H},\\
t&\mapsto \beta(t,.)\vert_H\quad \bigl(\text{that is, }h\mapsto \beta(t,h)\bigr),
\end{split}
\]
is a homomorphism with kernel $H$ and image $\{\chi\in\wh{H}: \chi(z)=1\ \forall z\in Z\}$. In particular, $\tilde\beta$ induces an isomorphism $T/H\rightarrow (H/Z)\,\widehat{\null}\,$.

\item We have $\lvert T\rvert\lvert Z \rvert=\lvert H\rvert^2$ (or, equivalently, $\lvert T/Z\rvert=\lvert H/Z\rvert^2$) and $\chr{\FF}$ does not divide $\lvert T/Z\rvert$.

\item $\cD$ is simple if and only if it is central (that is, $\lvert Z\rvert=1$).
\end{romanenumerate}
\end{proposition}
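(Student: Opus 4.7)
My approach centers on the alternating bicharacter $\beta$ and leverages standard properties of twisted group algebras of finite abelian groups.

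First I would establish (i). Since $\FF$ is algebraically closed, every nonzero homogeneous component $\cD_t$ is one-dimensional, as already noted in the proof of Theorem \ref{th:graded_central}. Choosing $X_e = 1$ and $X_t \in \cD_t\setminus\{0\}$ for each $t \in T$ yields a $2$-cocycle $\sigma:T\times T\rightarrow \FF^\times$ via $X_{t_1}X_{t_2}=\sigma(t_1,t_2)X_{t_1t_2}$, so $\cD\cong \FF^\sigma T$. The restriction $\sigma\vert_{H\times H}$ is symmetric, since $\cF=\bigoplus_{h\in H}\FF X_h$ is commutative. As $\FF^\times$ is divisible, $\Ext^1(H,\FF^\times)=0$, so this symmetric cocycle is a coboundary; rescaling the $X_h$'s (and leaving the other $X_t$'s untouched) we may arrange $\sigma(h_1,h_2)=1$ for all $h_1,h_2\in H$, giving (i).

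For (ii), the identity $X_{t_1}X_{t_2}=\beta(t_1,t_2)X_{t_2}X_{t_1}$ combined with the $2$-cocycle condition shows that $\beta$ is a bicharacter, and $\beta(t,t)=1$ is immediate from the definition. After (i), $\beta\vert_{H\times H}=1$, so $H$ is isotropic. If $H\subsetneq H'$ were a strictly larger isotropic subgroup, then $\bigoplus_{h'\in H'}\FF X_{h'}$ would be a commutative graded subspace closed under multiplication, hence a graded subfield of $\cD$ strictly containing $\cF$, contradicting maximality. The radical of $\beta$ is the set of $t\in T$ for which $X_t$ commutes with every $X_s$, i.e., $X_t\in \cZ$, so $\rad(\beta)=\supp\cZ=Z$.

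I would prove (iv) before (iii), since (iii) uses the resulting cardinalities. The bicharacter $\beta$ descends to a nondegenerate alternating bicharacter $\bar\beta$ on $T/Z$. Nondegeneracy yields an injection $T/Z\hookrightarrow \Hom(T/Z,\FF^\times)$, and since the target has order equal to the $p'$-part of $|T/Z|$ (with $p=\chr \FF$), this forces $\chr\FF\nmid |T/Z|$. Also $H/Z$ is a maximal isotropic subgroup of $T/Z$, since any isotropic lift contains $Z$ and hence lies in $H$. The standard fact that a maximal isotropic subgroup $B$ in a finite abelian group carrying a nondegenerate alternating bicharacter satisfies $B^\perp=B$ is proved by noting that any $a\in B^\perp\setminus B$ would enlarge $B$ to an isotropic subgroup $\langle a\rangle + B$ (using $\bar\beta(a,a)=1$), contradicting maximality; nondegeneracy then gives $|B|\cdot|B^\perp|=|A|$, so $|H/Z|^2=|T/Z|$, equivalently $|T||Z|=|H|^2$. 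Now (iii) follows: the maximality argument for $H$ in $T$ itself shows $\ker\tilde\beta=H$, while the image lies in $Z^\perp=\{\chi\in\wh H:\chi\vert_Z=1\}$ because $\beta(\cdot,z)=1$ for $z\in Z$. Both the image (of order $|T/H|=|H/Z|$) and $Z^\perp$ (of order $|H/Z|$, using that $\chr\FF\nmid |H/Z|$ from (iv)) have the same cardinality, so the containment is an equality.

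For (v), if $\cD$ is simple then $\cZ=Z(\cD)$ is a field; but $\cZ\cong \FF Z$ as algebras, which is a field only when $|Z|=1$ (otherwise $\FF Z$ contains either a nontrivial idempotent or a nilpotent), making $\cD$ central. Conversely, if $|Z|=1$ then $\beta$ is nondegenerate on $T$ and $\chr\FF\nmid |T|$ by (iv), and the classical result that a twisted group algebra $\FF^\sigma T$ of a finite abelian group $T$ over an algebraically closed field of characteristic coprime to $|T|$ with nondegenerate commutator bicharacter is isomorphic to $M_{|H|}(\FF)$ yields simplicity of $\cD$. The main obstacle I anticipate is the bookkeeping in the third paragraph: one must carefully coordinate the descent to $T/Z$, the nondegeneracy statement forcing $\chr\FF\nmid |T/Z|$, and the maximal-isotropic self-orthogonality $|H/Z|^2=|T/Z|$, all relying on the identification $\rad(\beta)=Z$ from (ii).
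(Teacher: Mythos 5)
Your proof is correct, and it runs close to the paper's argument in spirit while reorganizing the logic in a few places. The paper fixes the same model $\cD\cong\FF^\sigma T$ with $\sigma$ trivial on $H$ (also by appeal to the vanishing of $\Ext(H,\FF^\times)$, via the earlier Proposition~\ref{pr:Falgclosed}), identifies $\rad(\beta)=Z$ and characterizes maximal graded subfields by maximal isotropic subgroups, exactly as you do. The divergence is in (iii) and (iv): the paper proves (iii) first, establishing the isomorphism $T/H\to(H/Z)\,\widehat{\null}\;$ directly by descending to a nondegenerate $\beta'$ on $T/Z$, extending a character of $H/Z$ to $T/Z$, and solving $\beta'(tZ,\cdot)=\chi(\cdot)$ by nondegeneracy; the cardinality $|T/H|=|H/Z|$ in (iv) then drops out as a corollary. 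You instead prove (iv) first, via the self-orthogonality of maximal isotropic subgroups ($B^\perp=B$, hence $|B|^2=|A|$), and then deduce surjectivity of $\tilde\beta$ in (iii) by a counting argument. Both are standard paths through the same structure; yours isolates the ``maximal isotropic $\Rightarrow$ Lagrangian'' lemma cleanly, while the paper's derivation of (iv) from (iii) avoids invoking that lemma. Also, for the non-divisibility of $\chr\FF$, you argue via the injection $T/Z\hookrightarrow\Hom(T/Z,\FF^\times)$ and the order of the target, whereas the paper argues that any element of order $p$ would lie in the radical; both are valid. Finally, for (v) the paper simply cites a reference, while you give a self-contained argument; note that the converse direction can be done even more cheaply (a graded division algebra has no nonzero proper graded ideals, hence zero Jacobson radical, hence is semisimple; trivial center then forces one simple factor), but your invocation of the classical structure theorem is also fine.
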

\begin{proof}
We may choose nonzero elements $c_t\in\cD_t$, for any $t\in T$, satisfying the extra condition $c_{h_1}c_{h_2}=c_{h_1h_2}$ for all $h_1,h_2\in H$ (see the proof of Proposition \ref{pr:Falgclosed}). Then $c_{t_1}c_{t_2}=\sigma(t_1,t_2)c_{t_1t_2}$ for a $2$-cocycle $\sigma$ that is trivial on $H$. This proves (i).

The map $\beta$ is determined by the property 
\[
c_{t_1}c_{t_2}=\beta(t_1,t_2)c_{t_2}c_{t_1}\ \forall t_1,t_2\in T,
\]
and is easily shown to be an alternating bicharacter. Its radical is spanned by the elements $t\in T$ such that $c_t$ is in the center, so it equals $Z$. It also follows that the maximal graded subfields of $\cD$ are the subspaces $\cK=\bigoplus_{k\in K}\cD_k$, with $K$ a maximal isotropic subgroup of $T$. Thus we have (ii). 

For any $t\in T$, $\beta(t,H)=1$ if and only if $t\in H$, since $H$ is a maximal isotropic subgroup of $T$. Hence, $\tilde\beta$ is a homomorphism with kernel $H$. Also, $\beta$ induces a nondegenerate alternating bicharacter $\beta':T/Z\times T/Z\rightarrow \FF^\times$. Since any element whose order equals the characteristic of $\FF$ is in the radical of $\beta'$, this forces the characteristic of $\FF$ to be $0$ or coprime to $\lvert T/Z\rvert$. Now $\beta'$ induces, as above, a  homomorphism $\tilde\beta':T/Z\rightarrow (H/Z)\,\widehat{\null}\;$ with kernel $H/Z$. Note that $(H/Z)\,\widehat{\null}\;$ is naturally isomorphic to the subgroup $\{\chi\in \hat H: \chi(z)=1\ \forall z\in Z\}$. Besides, any character $\chi$ of $H/Z$ extends to a character of $T/Z$, and the nondegeneracy of $\beta'$ gives an element $t\in T$ such that $\beta'(tZ,t'Z)=\chi(t'Z)$ for all $t'\in T$. Then $\beta(t,.)\vert_H=\chi$, and we have proved (iii) and a part of (iv).

Now, we have $\lvert H/Z\rvert=\lvert (H/Z)\,\widehat{\null}\,\rvert$ because the characteristic of $\FF$ does not divide $\lvert H/Z\rvert$. By (iii), we get  $\lvert T/H\rvert=\lvert H/Z\rvert$, and the remaining part of (iv) follows.

Finally, (v) is proved in \cite[p.~35]{EK_mon}.
\end{proof}

\begin{corollary}\label{co:WDfinite}
Let $\cW$ be a $G$-graded simple left $\cR$-module where $\cR$ is a unital associative $G$-graded algebra over an algebraically closed field $\FF$. Then the following conditions are equivalent:
\begin{romanenumerate}
\item $C(\cW)$ contains a maximal graded subfield isomorphic to the group algebra of its support $H$ where $|H|$ is finite and not divisible by $\chr{\FF}$;
\item $\dim C(\cW)$ is finite and not divisible by $\chr{\FF}$.
\end{romanenumerate}
Under these conditions, $\cW$ is completely reducible as an ungraded module.
\end{corollary}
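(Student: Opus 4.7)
The plan is to bootstrap from Proposition \ref{pr:DTHbeta} in both directions, passing through the key fact that if $C(\cW)$ is finite-dimensional over the algebraically closed field $\FF$, then necessarily $C(\cW)_e = \FF 1$ (a finite-dimensional division algebra over $\FF$ must be $\FF$ itself), whence $\cW$ is graded central and $\dim C(\cW)$ equals $|T|$, where $T = \supp C(\cW)$. This identification is the bridge that will translate the divisibility condition in (ii) into the group-theoretic data of Proposition \ref{pr:DTHbeta}.

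For (i) $\Rightarrow$ (ii), I would first observe that the hypothesis places $(\cW, \cF)$ as an object of $\frN(\pi)$, where $\pi : G \to G/H$, in precisely the setting of Section \ref{se:fin_dim_cent}. Theorem \ref{th:Wcomp.red.} then yields a decomposition of $\cW$ into $|H|$ simple and central ungraded submodules, giving complete reducibility and, by Schur, finite-dimensionality of $C(\cW)$ as a product of matrix algebras over $\FF$. The bridge above then applies, and Proposition \ref{pr:DTHbeta}(iv) gives $|T| = |H|^2 / |Z|$, which divides $|H|^2$; consequently, $\chr{\FF} \nmid |H|$ forces $\chr{\FF} \nmid \dim C(\cW) = |T|$.

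For (ii) $\Rightarrow$ (i), the bridge gives $\cW$ graded central directly from the finite-dimensionality of $C(\cW)$, together with $\dim C(\cW) = |T|$. Proposition \ref{pr:Falgclosed}(i) produces a maximal graded subfield $\cF \subseteq C(\cW)$, and Proposition \ref{pr:Falgclosed}(ii) automatically gives $\cF \cong \FF H$ with $H = \supp \cF$ (and $|H|$ finite, since $|H| \leq |T|$). To verify $\chr{\FF} \nmid |H|$, I would combine Proposition \ref{pr:DTHbeta}(iv), which yields $\chr{\FF} \nmid |T/Z| = |H/Z|^2$, with the observation that $|Z|$ divides $|T| = \dim C(\cW)$, so $\chr{\FF} \nmid |Z|$ as well, hence $\chr{\FF} \nmid |Z| \cdot |H/Z| = |H|$. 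The concluding complete-reducibility statement is then immediate from Theorem \ref{th:Wcomp.red.} applied under (i).

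The only delicate point I anticipate is recognizing that graded centrality of $\cW$ is automatic once we know $C(\cW)$ is finite-dimensional over an algebraically closed field; once this identification $\dim C(\cW) = |T|$ is in hand, the rest is a direct bookkeeping exercise using Propositions \ref{pr:DTHbeta} and \ref{pr:Falgclosed}.
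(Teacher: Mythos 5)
Your proof is correct and follows essentially the same route the paper intends: identify $C(\cW)_e=\FF 1$ (hence $\dim C(\cW)=|T|$) from finite-dimensionality over the algebraically closed field, then invoke Theorem~\ref{th:Wcomp.red.} for (i)$\Rightarrow$(ii) and complete reducibility, and Propositions~\ref{pr:Falgclosed} and~\ref{pr:DTHbeta}(iv) for the divisibility bookkeeping. (A tiny shortcut in (ii)$\Rightarrow$(i): since $H$ is a subgroup of $T$, $|H|$ divides $|T|$, so $\chr\FF\nmid|T|$ already gives $\chr\FF\nmid|H|$ without going through $|Z|$ and $|H/Z|$.)
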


\smallskip

Now we go back to the situation described after Theorem \ref{th:Wcomp.red.}. The center $\cZ$ is isomorphic to the direct product of $m$ copies of $\FF$, so there are orthogonal idempotents $\veps_1,\ldots,\veps_m$ such that $\cZ=\FF\veps_1\oplus\cdots\oplus\FF\veps_m$. For each $i$, the idempotent $\veps_i$ corresponds to the identity matrix in the $i$th factor of $\cD\simeq M_{n_1}(\FF)\times\cdots\times M_{n_m}(\FF)$. Hence, 
\begin{equation}\label{eq:WWeis}
\cW=\cW\veps_1\oplus\cdots\oplus\cW\veps_m,
\end{equation} 
where $\cW\veps_i$ is the submodule of $\cW$ equal to the sum of all submodules isomorphic to $\cV^i$. Note that $\cW\veps_i$ is a $G/Z$-graded submodule of $\cW$.

\begin{proposition}\label{pr:ViVj}
For any $1\leq j,k\leq n$, $\cV^j$ is isomorphic to $\cV^k$ if and only if the restrictions $\chi_j\vert_Z$ and $\chi_k\vert_Z$ coincide.
In particular, $\cV^j$ is isomorphic to $\cV^1$ if and only if $\chi_j(z)=1$ for all $z\in Z$.
\end{proposition}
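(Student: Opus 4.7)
The plan is to distinguish the isomorphism classes among the $\cV^j$ through the action of the center $\cZ=Z(\cD)$, which must be by scalars on each central simple module.

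First, I would identify the primitive idempotents $\veps_1,\ldots,\veps_m$ of $\cZ\cong\FF Z$: they are in natural bijection with the characters of $Z$, via $\veps_i\leftrightarrow\psi_i\in\wh{Z}$ where $\veps_i=\frac{1}{\lvert Z\rvert}\sum_{z\in Z}\psi_i(z)^{-1}c_z$, so that $c_z\veps_i=\psi_i(z)\veps_i$ for all $z\in Z$. Consequently, right multiplication by $c_z$ acts on the isotypic component $\cW\veps_i$ (which is isomorphic to $n_i\cV^i$) as the scalar $\psi_i(z)$, and hence in particular on every simple submodule of $\cW\veps_i$, each of which is isomorphic to $\cV^i$.

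Second, I would compute the action of $c_z$ on the central image $\cV^j=\cW/\cW\ker(\rho_{\chi_j})$. Since $c_z-\chi_j(z)c_e\in\ker(\rho_{\chi_j})$, right multiplication by $c_z$ induces multiplication by the scalar $\chi_j(z)$ on the quotient. The induced operator is well defined and $\cR$-linear: $\cR$-linear because $c_z\in\cD$ centralizes the $\cR$-action, and well defined because $c_z$ is central in $\cD$, so it commutes with the right action of $\ker(\rho_{\chi_j})\subseteq\cF$ and thus preserves $\cW\ker(\rho_{\chi_j})$. Since $\cV^j$ is central, this scalar is an invariant of its isomorphism class as an $\cR$-module.

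Combining the two descriptions: writing $\cV^j\cong\cV^{\sigma(j)}$ for some $\sigma(j)\in\{1,\ldots,m\}$ (which exists because $\cV^j$ is a simple quotient of the completely reducible module $\cW$, by Theorem \ref{th:Wcomp.red.}), the scalar by which $c_z$ acts must agree in both interpretations, so $\chi_j(z)=\psi_{\sigma(j)}(z)$ for every $z\in Z$. Since $\psi_1,\ldots,\psi_m$ are pairwise distinct characters of $Z$, this gives $\cV^j\cong\cV^k$ if and only if $\sigma(j)=\sigma(k)$ if and only if $\chi_j\vert_Z=\chi_k\vert_Z$. The final assertion follows by taking $k=1$, since $\chi_1$ is trivial. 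The only subtle point is the well-definedness of the induced action in the second step, and it is immediate from the centrality of $c_z$ in $\cD$.
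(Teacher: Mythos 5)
Your strategy---reading off the isotypic type of $\cV^j$ from the scalar by which the central elements $c_z$ act---is essentially the same as the paper's (the paper evaluates $\rho_{\chi_j}$ on the idempotents $\veps_i$, you evaluate it on the $c_z$'s; these carry the same information). But there is a gap in the sentence ``Since $\cV^j$ is central, this scalar is an invariant of its isomorphism class as an $\cR$-module,'' which is the crux of your ``combining'' step.

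Centrality of $\cV^j$ only tells you that $\End_\cR(\cV^j)=\FF$, so that any $\cR$-endomorphism of $\cV^j$ is a scalar. It does not say that the two scalars you computed must be equal: one comes from right multiplication by $c_z$ restricted to a simple \emph{submodule} of $\cW\veps_{\sigma(j)}$, the other from the operator on the \emph{quotient} $\cV^j=\cW/\cW\ker(\rho_{\chi_j})$ induced by right multiplication by $c_z$. An abstract $\cR$-module isomorphism $\cV^j\cong\cV^{\sigma(j)}$ has no reason to intertwine these two operators, since neither is defined in terms of the $\cR$-module structure alone---they come from the ambient right $\cD$-action on $\cW$. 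What is actually true, and what you need, is the following link: the canonical quotient map $\gamma\colon\cW\to\cV^j$ intertwines right multiplication by $c_z$ on $\cW$ with the induced operator on $\cV^j$ (this is exactly the well-definedness you already checked), and moreover $\gamma$ annihilates every isotypic component $\cW\veps_i$ with $i\ne\sigma(j)$, because the image is simple of type $\cV^{\sigma(j)}$ while $\cW\veps_i$ is a sum of copies of $\cV^i$. Hence the restriction $\gamma\vert_{\cW\veps_{\sigma(j)}}$ is a nonzero $c_z$-equivariant map, and comparing eigenvalues on its image gives $\chi_j(z)=\psi_{\sigma(j)}(z)$. (Equivalently, one can observe that $\gamma$ restricted to the submodule $\cW\zeta_j\subseteq\cW\veps_{\sigma(j)}$, where $\zeta_j$ is the primitive idempotent of $\cF$ with $\rho_{\chi_j}(\zeta_j)=1$, is an isomorphism commuting with $c_z$.) Once this intertwining observation is added, your argument is complete; note that this is also, in slightly different clothing, what the paper does by computing $\rho_{\chi_j}(\veps_i)$ and showing $\cW\ker(\rho_{\chi_j})\supseteq\bigoplus_{l\ne i}\cW\veps_l$.
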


\begin{proof}
By our choice of the ordering, $\cV^j$ is isomorphic to exactly one of the first $m$ direct summands, say, to $\cV^i$ where $1\le i\le m$. Since $\cV^i$ is contained in $\cW\veps_i$, we have $\cV^i=\cV^i\veps_i$ and $\cV^i\veps_l=0$ for all $1\le l\le m$, $l\ne i$. Hence, $\cV^j$ is isomorphic to $\cV^i$ if and only if $\cV^j$ is contained in $\cW\veps_i$, if and only if $\cV^j\veps_l=0$ for all $1\le l\le m$, $l\ne i$. 

This unique $i$ can also be characterized in terms of $\rho_{\chi_j}$. Indeed, $\cV^j$ is isomorphic to $\cW/\cW\ker(\rho_{\chi_j})$, and there exists $1\le i\le m$ such that $\rho_{\chi_j}(\veps_i)=1$ and $\rho_{\chi_j}(\veps_l)=0$ for $l\ne i$. It follows that $\cW\ker(\rho_{\chi_j})$ contains $\bigoplus_{l\ne i}\cW\veps_l$, so  $\cW/\cW\ker(\rho_{\chi_j})$ is isomorphic to a simple quotient of $\cW/\bigl(\bigoplus_{l\ne i}\cW\veps_l\bigr)\simeq \cW\veps_i$, hence isomorphic to $\cV^i$.
Therefore, $\cV^j$ is isomorphic to $\cV^i$ if and only if $\rho_{\chi_j}(\veps_i)=1$ and $\rho_{\chi_j}(\veps_l)=0$ for all $1\le l\le m$, $l\ne i$, if and only if $\rho_{\chi_j}\vert_{\cZ}=\rho_{\chi_i}\vert_{\cZ}$. But $\rho_{\chi_j}(c_z)=\chi_j(z)$ and $\rho_{\chi_i}(c_z)=\chi_i(z)$, so $\rho_{\chi_j}\vert_{\cZ}=\rho_{\chi_i}\vert_{\cZ}$ if and only if $\chi_j\vert_Z=\chi_i\vert_Z$. The result follows.
\end{proof}

\begin{corollary}\label{co:V1alpha}
For any character $\chi\in\wh{G}$, the twisted module $(\cV^1)^{\alpha_\chi}$, where the automorphism $\alpha_\chi$ is given by Equation \eqref{eq:alpha_chi}, is isomorphic to $\cV^i$ for the unique $1\le i\le m$ such that $\chi\vert_Z=\chi_i\vert_Z$.
In particular, $(\cV^1)^{\alpha_\chi}$ is isomorphic to $\cV^1$ if and only if $\chi\in Z^\perp$ (that is, $\chi(z)=1$ for all $z\in Z$).
\end{corollary}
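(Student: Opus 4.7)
The plan is to reduce the statement to results already established by observing that, up to $\wb{G}$-graded isomorphism, $(\cV^1)^{\alpha_\chi}$ depends on $\chi \in \wh{G}$ only through $\chi|_H$, and then applying Propositions \ref{pr:Vxi}, \ref{pr:twisted} and \ref{pr:ViVj} in sequence.

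First, I would show that for $\chi, \chi' \in \wh{G}$ with $\chi|_H = \chi'|_H$, the modules $(\cV^1)^{\alpha_\chi}$ and $(\cV^1)^{\alpha_{\chi'}}$ are $\wb{G}$-graded isomorphic. Indeed, $\eta \bydef \chi/\chi'$ lies in $H^\perp$, so $\eta(g)$ depends only on $\bar{g}$, and the scalar map $v_{\bar{g}} \mapsto \eta(g)^{-1} v_{\bar{g}}$ provides such an isomorphism (a routine check using the definition of $\alpha_\chi$ in \eqref{eq:alpha_chi}). Hence the $\wb{G}$-graded isomorphism class of $(\cV^1)^{\alpha_\chi}$ is determined by $\chi|_H$.

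Next, writing $\chi_j \bydef \chi|_H$, Proposition \ref{pr:Vxi} (applied with $\chi_j \in \wh{H}$ and its extension $\chi$ to $G$) yields $(\cV^1)^{\alpha_\chi} \cong (\cV^1)^{\chi_j}$ as $\wb{G}$-graded modules. Proposition \ref{pr:twisted} then identifies $(\cV^1)^{\chi_j}$ with the canonical central image $\cW/\cW\ker(\rho_{\chi_j}) \cong \cV^j$. Under the reordering fixed just after Theorem \ref{th:Wcomp.red.}, $\cV^j$ is isomorphic to $\cV^i$ for a unique $1 \le i \le m$, and by Proposition \ref{pr:ViVj} this $i$ is characterized by $\chi_j|_Z = \chi_i|_Z$, that is, $\chi|_Z = \chi_i|_Z$.

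It remains to verify the existence and uniqueness of such $i$ for arbitrary $\chi \in \wh{G}$. Uniqueness is immediate from Proposition \ref{pr:ViVj}, which shows that $\chi_1|_Z, \ldots, \chi_m|_Z$ are pairwise distinct. For existence, a dimension count suffices: $m = \dim\cZ$ since $\cZ$ is the center of $\cD \cong \prod_{i=1}^{m} M_{n_i}(\FF)$, and $\dim\cZ = |Z|$ because $\cZ \cong \FF Z$ by Proposition \ref{pr:DTHbeta}; hence the injective map $\{1,\ldots,m\} \to \wh{Z}$, $i \mapsto \chi_i|_Z$, is in fact a bijection. The ``in particular'' clause is then immediate: $\chi_1 = 1$ forces $\chi_1|_Z = 1$, so $\chi|_Z = \chi_1|_Z$ holds exactly when $\chi \in Z^\perp$. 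I expect the only real hurdle to be the bookkeeping that distinguishes twists by characters of $H$ (as in Proposition \ref{pr:Vxi}) from twists by characters of $G$ via the automorphisms $\alpha_\chi$; this is precisely what the first step handles.
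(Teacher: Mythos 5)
Your proof is correct and follows essentially the same route as the paper's: set $\chi_j = \chi|_H$, then chain Propositions \ref{pr:Vxi}, \ref{pr:twisted} (with $\chi_1 = 1$ giving $\cV^1 = \cW/\cW\ker(\rho)$), and \ref{pr:ViVj}. Your additional preliminary step (independence of the extension of $\chi|_H$) is already a consequence of Proposition \ref{pr:Vxi} and hence not needed, and the existence/uniqueness count via $m = \dim\cZ = |Z| = |\wh{Z}|$ is a correct fleshing-out of what the paper leaves implicit.
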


\begin{proof}
With $\chi'=\chi\vert_H$, $(\cV^1)^{\alpha_\chi}$ is the twisted module $(\cV^1)^{\chi'}$ (Proposition \ref{pr:Vxi}), which is isomorphic to $\cW/\cW\ker(\rho_{\chi'})$ (Proposition \ref{pr:twisted}). Thus, $(\cV^1)^{\alpha_\chi}$ is isomorphic to $\cV^i$ if and only if $\chi'\vert_Z=\chi_i\vert_Z$.
\end{proof}

The graded division algebra $\cD=C(\cW)$ is, on the one hand, the direct sum $\cD=\bigoplus_{t\in T}\cD_t$, where $\dim\cD_t=1$ for all $t\in T$, and, on the other hand, $\cD=\bigoplus_{i=1}^m\cD\veps_i$, where $\cD\veps_i$ is a simple ideal isomorphic to the matrix algebra $M_{n_i}(\FF)$ for all $i=1,\ldots,m$.

\begin{theorem}\label{th:Deiej}
For any $i=1,\ldots,m$, $\cD\veps_i$ is a central, $G/Z$-graded division algebra with support $T/Z$,
and $n_1=\cdots=n_m=\lvert H/Z\rvert=\sqrt{\lvert T/Z\vert}$.
Moreover, for all $1\leq i,j\leq m$, $\cD\veps_i$ and $\cD\veps_j$ are isomorphic as $G/Z$-graded algebras.
\end{theorem}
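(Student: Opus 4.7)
The plan is to establish the listed structural properties of each $\cD\veps_i$—graded division, support $T/Z$, centrality, dimension—and then to construct explicit $G/Z$-graded algebra isomorphisms $\cD\veps_i\cong\cD\veps_j$ via suitable $G$-graded automorphisms of $\cD$. Since $\veps_i\in\cZ\subseteq\cD_{\bar e}$, the ideal $\cD\veps_i$ inherits the $G/Z$-grading from $\cD$. Using that $\veps_i$ is central and each $c_g$ is invertible in $\cD$, one first checks $c_g\veps_i\neq 0$ for $g\in T$; then, via $c_{gz}=\sigma(g,z)^{-1}c_g c_z$ combined with the fact that $c_z\veps_i\in\FF\veps_i$ (a consequence of $c_z\in\cZ$), one sees that $c_{gz}\veps_i$ is a nonzero scalar multiple of $c_g\veps_i$ for every $z\in Z$. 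Hence $(\cD\veps_i)_{\bar g}=\FF c_g\veps_i$ is one-dimensional for $\bar g\in T/Z$ and zero otherwise, which immediately gives that $\cD\veps_i$ is a $G/Z$-graded division algebra with support $T/Z$. Centrality in the $G/Z$-graded sense is automatic because the ungraded center $\FF\veps_i$ of $\cD\veps_i\cong M_{n_i}(\FF)$ already lies in the neutral $G/Z$-component. Comparing dimensions, $n_i^2=\dim_\FF\cD\veps_i=|T/Z|$, and Proposition \ref{pr:DTHbeta}(iv) applied to $\cD$ yields $|T/Z|=|H/Z|^2$, so $n_1=\cdots=n_m=|H/Z|=\sqrt{|T/Z|}$.

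For the isomorphism assertion, the key is to find a $G$-graded automorphism of $\cD$ sending $\veps_i$ to $\veps_j$. I would first analyze $\cZ$ in more detail: since each $\veps_i$ is a primitive idempotent of $\cZ$, for each $z\in Z$ there is a scalar $\psi_i(z)\in\FF^\times$ with $c_z\veps_i=\psi_i(z)\veps_i$. Proposition \ref{pr:DTHbeta}(i) and the inclusion $Z\subseteq H$ imply that $\sigma$ is trivial on $Z$, so each $\psi_i:Z\to\FF^\times$ is a genuine group character. Because $\{c_z\}_{z\in Z}$ and $\{\veps_k\}_{k=1}^m$ are both $\FF$-bases of $\cZ$ and $m=|Z|$, the matrix $(\psi_k(z))_{k,z}$ is invertible, which forces the $\psi_k$ to be pairwise distinct and hence to exhaust $\wh Z$. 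The orthogonality relations for $\wh Z$ then give the explicit inversion $\veps_i=|Z|^{-1}\sum_{z\in Z}\psi_i(z)^{-1}c_z$ (where $|Z|$ is invertible in $\FF$ since $|Z|$ divides $|H|$).

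Finally, for any character $\chi\in\wh T$ the rescaling map $\alpha_\chi:c_t\mapsto\chi(t)c_t$ is a $G$-graded algebra automorphism of $\cD$, since the multiplicativity of $\chi$ preserves the cocycle $\sigma$. A short calculation from the explicit expression for $\veps_i$ shows $\alpha_\chi(\veps_i)=\veps_j$ exactly when $\chi|_Z=\psi_i/\psi_j$. Since $\FF^\times$ is divisible, the character $\psi_i/\psi_j$ of $Z$ extends to some $\chi\in\wh T$, and then $\alpha_\chi$ sends the ideal $\cD\veps_i$ onto $\cD\veps_j$, restricting to the desired $G$-graded—hence $G/Z$-graded—algebra isomorphism. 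The main technical care, in my view, is the cocycle bookkeeping: verifying that each $\psi_i$ really is a character and that $\alpha_\chi$ is multiplicative, both of which hinge on the triviality of $\sigma$ on $H$ guaranteed by Proposition \ref{pr:DTHbeta}(i).
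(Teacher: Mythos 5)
Your argument for the first assertion (graded division structure, support $T/Z$, one-dimensional components $\FF c_g\veps_i$, dimension count via Proposition~\ref{pr:DTHbeta}(iv)) runs along essentially the same lines as the paper's, just with more explicit cocycle bookkeeping: where you compute $c_{gz}\veps_i=\sigma(g,z)^{-1}\psi_i(z)\,c_g\veps_i$ directly, the paper instead observes $\cD_{tZ}=\cD_t\cZ=\bigoplus_i\cD_t\veps_i$ and reads off $\dim(\cD\veps_i)_{tZ}=1$. Both are correct and amount to the same thing. One small nitpick: you say the one-dimensionality of the components ``immediately'' gives the graded division property; the invertibility of $c_g\veps_i$ inside $\cD\veps_i$ (with inverse $c_g^{-1}\veps_i$, using that $\veps_i$ is a central idempotent and $c_g$ is invertible in $\cD$) deserves a one-line check, which the paper does make explicit.

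The genuinely different part is the isomorphism $\cD\veps_i\simeq\cD\veps_j$. The paper does not prove this here: it cites \cite[Proposition~3]{EK_D4} and offers, in Remark~\ref{re:DLpi1}, an alternative argument via the loop construction ($\cD\simeq L_{\pi'}(\cD\veps_i)$ for each $i$, so $\cD\veps_i\sim_{\pi'}\cD\veps_j$ in the sense of \cite{ABFP}, and over an algebraically closed field this forces a graded isomorphism). Your route is more elementary and self-contained: you identify $\psi_i\in\wh{Z}$ via $c_z\veps_i=\psi_i(z)\veps_i$, use $\sigma|_{Z\times Z}=1$ (which holds because $Z\le H$) to see each $\psi_i$ is a character, invoke the invertibility of the change-of-basis matrix between $\{c_z\}$ and $\{\veps_k\}$ to see the $\psi_i$ are distinct and exhaust $\wh{Z}$, invert by orthogonality to get $\veps_i=|Z|^{-1}\sum_z\psi_i(z)^{-1}c_z$, and then extend $\psi_i\psi_j^{-1}$ to some $\chi\in\wh{T}$ (divisibility of $\FF^\times$) so that the rescaling automorphism $\alpha_\chi\colon c_t\mapsto\chi(t)c_t$ of $\cD$ is $G$-graded, hence $G/Z$-graded, and carries $\veps_i$ to $\veps_j$, restricting to the desired isomorphism $\cD\veps_i\to\cD\veps_j$. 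This is a clean, explicit construction of the graded isomorphism. What the paper's loop-algebra route buys instead is that it makes the structural reason transparent ($\cD$ is the loop algebra of each $\cD\veps_i$), which fits the narrative of the paper and anticipates Proposition~\ref{pr:WLpi1}. Both are valid; yours is arguably the more direct proof of the specific assertion.
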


\begin{proof}
For any $t\in T$, 
\[
\cD_{tZ}\bydef \bigoplus_{z\in Z}\cD_{tz}=\cD_t\cZ=\bigoplus_{i=1}^m\cD_t\veps_i.
\]
Hence $\cD_{tZ}\veps_i=\cD_t\veps_i$ has dimension $1$ for $i=1,\ldots,m$, and if $\Xi$ is a transversal of $Z$ in $T$, then
\begin{equation}\label{eq:Dei}
\cD\veps_i=\bigoplus_{t\in \Xi}\cD_{tZ}\veps_i=\bigoplus_{t\in \Xi}\cD_t\veps_i,
\end{equation}
is $G/Z$-graded with support $T/Z$, with $(\cD\veps_i)_{tZ}=\cD_t\veps_i$ of dimension $1$ for any $t\in T$. 

For any $i=1,\ldots,m$, $t\in T$ and $0\ne x\in\cD_t\veps_i$, there is a nonzero element $y\in \cD_t$ such that $x=y\veps_i$. But the nonzero homogeneous elements in $\cD$ are invertible. Then $x(y^{-1}\veps_i)=(y^{-1}\veps_i)x=\veps_i$, which is the identity element in $\cD\veps_i$. Thus $x$ is invertible in $\cD\veps_i$. Therefore, $\cD\veps_i$ is a $G/Z$-graded division algebra, and it is central because its center is the intersection $\cZ\cap\cD\veps_i=\FF\veps_i$ (alternatively, use that $\cD\veps_i$ is isomorphic to $M_{n_i}(\FF)$). 

Now, for any $i=1,\ldots,m$, we have $n_i^2=\dim \cD\veps_i=\lvert T/Z \rvert$, and Proposition \ref{pr:DTHbeta}(iv) finishes the proof of the first assertion. The second assertion is Proposition~3 in \cite{EK_D4} (see also Remark \ref{re:DLpi1}, below, for an alternative argument).
\end{proof}

\begin{corollary}\label{co:Wei}
For any $i=1,\ldots,m$, $\cW\veps_i$ is a $G/Z$-graded simple submodule of $\cW$. For $i\ne j$, the modules $\cW\veps_i$ and $\cW\veps_j$ are not isomorphic as ungraded modules.
\end{corollary}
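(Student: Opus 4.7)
The plan is to prove $(G/Z)$-graded simplicity of $\cW\veps_i$ by identifying it with the loop module $L_{\pi''}(\cV^i)$ under $\pi'':G/Z\to G/H$, so that Proposition~\ref{pr:loop_simple}(ii) applies. The submodule assertion is immediate: since $\veps_i\in\cZ=\bigoplus_{z\in Z}\cD_z$ has $(G/Z)$-degree $eZ$ in the coarsening of the $G$-grading on $\cD$, right multiplication by $\veps_i$ is a $(G/Z)$-graded endomorphism of $\cW$.

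To construct the identification, I will fix an isomorphism $\cW\simeq L_\pi(\cV^1)$ of $G$-graded $\cR$-modules from Theorem~\ref{th:MpiNpi}(ii), under which $\cF$ becomes $L_\pi(\FF 1)$, and a direct computation shows that $\veps_i$ acts as right multiplication by $1\otimes\veps_i''$, where $\veps_i''=\frac{1}{|Z|}\sum_{z\in Z}\chi_i(z)^{-1}z\in\FF Z$ is the primitive idempotent associated with the character $\chi_i\vert_Z$. Using the identity $z\veps_i''=\chi_i(z)\veps_i''$, the $(G/Z)$-homogeneous component $(\cW\veps_i)_{gZ}$ equals $\cV^1_{gH}\otimes g\veps_i''$ for any representative $g$ of the coset $gZ$. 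Recalling from Proposition~\ref{pr:Vxi} and Corollary~\ref{co:V1alpha} that $\cV^i$ is graded isomorphic to the twist $(\cV^1)^{\alpha_{\tilde\chi_i}}$ for an extension $\tilde\chi_i\in\wh G$ of $\chi_i\in\wh H$, I will define
\[
\psi:L_{\pi''}(\cV^i)\longrightarrow\cW\veps_i,\qquad v\otimes gZ\longmapsto \tilde\chi_i(g)^{-1}\,v\otimes g\veps_i'',
\]
for $g$ in a chosen transversal of $Z$ in $G$, and then verify that $\psi$ is a well-defined isomorphism of $(G/Z)$-graded left $\cR$-modules; well-definedness and $\cR$-linearity both reduce to the identities $\tilde\chi_i\vert_Z=\chi_i\vert_Z$ and $z\veps_i''=\chi_i(z)\veps_i''$.

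Once the identification $\cW\veps_i\simeq L_{\pi''}(\cV^i)$ is established, $(G/Z)$-graded simplicity will follow from Proposition~\ref{pr:loop_simple}(ii): since $\cV^i$ is an object of $\frM(\pi)$, the $G$-pregrading on $\cV^i$ associated to its $(G/H)$-grading is thin, and any proper refinement of the $(G/Z)$-pregrading would lift via $\tilde\cV_g:=\tilde\cV_{gZ}$ to a proper refinement of the $G$-pregrading (compatibility with the $G$-grading on $\cR$ being automatic, since $\cR_g$ lies in the $(G/Z)$-component of degree $gZ$). For the second assertion, as ungraded modules $\cW\veps_i\simeq n_i\cV^i$, and the $\cV^i$ represent distinct isomorphism classes by construction, so the two cannot be isomorphic. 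The main technical step will be verifying that $\psi$ is $\cR$-linear, which requires carefully tracking the twist factor $\tilde\chi_i(g)^{-1}$ through the scalars arising when $kg$ is reduced back into the chosen transversal of $Z$.
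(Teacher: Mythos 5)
Your proof is correct, but it takes a genuinely different route from the paper's. The paper's argument is short and structural: by Theorem~\ref{th:Deiej}, $\cD\veps_i=C(\cW\veps_i)$ is a $G/Z$-graded division algebra whose neutral component $(\cD\veps_i)_e$ equals $\FF\veps_i$ and so contains no proper idempotents; combined with the observation that $\cW\veps_i$, being completely reducible as an ungraded module, is also completely reducible as a $G/Z$-graded module, one concludes immediately that $\cW\veps_i$ has no proper graded direct summand. Your argument instead produces an explicit $G/Z$-graded isomorphism $\cW\veps_i\simeq L_{\pi''}(\cV^i)$ (via the idempotent $\veps_i''=\tfrac{1}{|Z|}\sum_{z\in Z}\chi_i(z)^{-1}z$ and the twist $\tilde\chi_i$) and then invokes Proposition~\ref{pr:loop_simple}(ii) together with a lifting argument showing that thinness of the $G$-pregrading on $\cV^i$ implies thinness of the $(G/Z)$-pregrading. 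The well-definedness and $\cR$-linearity checks go through exactly as you indicate (the twist factor $\tilde\chi_i(g)^{-1}$ and the eigenvalue relation $z\veps_i''=\chi_i(z)\veps_i''$ cancel precisely), and the pregrading-lifting step is sound. Your approach is longer and requires the $\wh{H}$-twist machinery, but it yields extra information: the explicit identification of $\cW\veps_i$ as a loop module over $G/Z$ of $\cV^i$, which the paper instead packages separately in Propositions~\ref{pr:transitive} and~\ref{pr:WLpi1}. The paper's proof buys brevity and conceptual clarity by leaning on the graded division algebra structure established in Theorem~\ref{th:Deiej}; your proof buys concreteness and makes the loop-module picture explicit at the cost of more bookkeeping. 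Both treatments of the non-isomorphism assertion (via the choice of $\cV^1,\dots,\cV^m$ as distinct representatives) are equivalent.
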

\begin{proof}
$\cD\veps_i$ is the centralizer $C(\cW\veps_i)$ of $\cW\veps_i$. Since $\cW\veps_i$ is completely reducible as an ungraded module, it is also completely reducible as a $G/Z$-graded module (see \cite[Lemma 1]{EK_Israel}). If it were not graded simple, then its centralizer would contain proper idempotents in its neutral component $(\cD\veps_i)_e=\FF\veps_i$ (for each decomposition $\cW\veps_i=\cW'\oplus\cW''$, with $\cW'$ and $\cW''$ $G/Z$-graded submodules, the projection on $\cW'$ parallel to $\cW''$ is such an idempotent), which is impossible.
\end{proof}

It turns out that $\cW$ is isomorphic to a suitable loop module of any of the $\cW\veps_i$:

\begin{proposition}\label{pr:WLpi1}
Let $\pi':G\rightarrow G/Z$ be the natural homomorphism. Then, for any $i=1,\ldots,m$, the linear map given by
\[
\begin{split}
\phi_i:\cW&\longrightarrow L_{\pi'}(\cW\veps_i)\\
 w_g&\mapsto w_g\veps_i\otimes g,
\end{split}
\]
is an isomorphism of $G$-graded left $\cR$-modules.
\end{proposition}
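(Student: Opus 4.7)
My plan is to check in sequence that $\phi_i$ is a well-defined $G$-graded $\cR$-linear map, that it is injective, and that it is surjective, with the last being the only point requiring real work.

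For well-definedness and gradedness: the primitive idempotents $\veps_i$ lie in $\cZ = \bigoplus_{z\in Z}\cD_z$, so they are homogeneous of degree $\bar{e}$ for the induced $G/Z$-grading on $\cD$. Hence $w_g\veps_i\in\cW_{gZ}$, and since $w_g\veps_i\in\cW\veps_i$ by idempotency, it lies in $(\cW\veps_i)_{gZ}$. Therefore $\phi_i(w_g)\in L_{\pi'}(\cW\veps_i)_g$. The $\cR$-linearity is immediate because $\veps_i\in C(\cW)$ commutes with the left action of $\cR$.

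For injectivity, suppose $\phi_i(w_g)=0$ with $w_g\in\cW_g$. Then $w_g\veps_i=0$, so $w_g=\sum_{j\ne i}w_g\veps_j$ lies in the $\cR$-submodule $\bigoplus_{j\ne i}\cW\veps_j$ (recall the decomposition $\cW=\bigoplus_k \cW\veps_k$ is preserved by $\cR$). By graded simplicity of $\cW$, any nonzero homogeneous element generates $\cW$ as an $\cR$-module; hence $w_g\ne 0$ would force $\cW\subseteq\bigoplus_{j\ne i}\cW\veps_j$, contradicting $\cW\veps_i\ne 0$. So $\phi_i|_{\cW_g}$ is injective for each $g$, and by $G$-gradedness $\phi_i$ is injective on $\cW$.

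For surjectivity it suffices to show $\cW_g\veps_i=(\cW\veps_i)_{gZ}$ for every $g\in G$. Since $\veps_i$ is $G/Z$-homogeneous of degree $\bar{e}$, one has $(\cW\veps_i)_{gZ}=\cW_{gZ}\veps_i$. Next, each nonzero $c_z\in\cD_z$ (for $z\in Z$) is invertible in $\cD$ and induces a bijection $\cW_g\to\cW_{gz}$ by right multiplication, so $\cW_{gZ}=\bigoplus_{z\in Z}\cW_g c_z$. Applying $\veps_i$ on the right and using that $\veps_i$ commutes with each $c_z$ (both lie in the commutative $\cZ$), we obtain $\cW_{gZ}\veps_i=\sum_{z\in Z}\cW_g(c_z\veps_i)$. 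Finally, $\{c_z\veps_i:z\in Z\}$ spans the one-dimensional ideal $\cZ\veps_i=\FF\veps_i$, so the sum collapses to $\cW_g\veps_i$, as required.

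The only mildly subtle point is the surjectivity, precisely because the primitive idempotents $\veps_i$ are generally \emph{not} homogeneous for the original $G$-grading of $\cD$. The resolution is to pass to the $G/Z$-grading, where they are homogeneous of degree $\bar{e}$, and then exploit the one-dimensionality of $\cZ\veps_i$ to fold the apparently larger subspace $\cW_{gZ}\veps_i$ back down to $\cW_g\veps_i$.
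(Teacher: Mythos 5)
Your proof is correct and follows essentially the same route as the paper: both hinge on showing $\cW_{gZ}\veps_i=\cW_g\veps_i$ for surjectivity and on graded simplicity of $\cW$ for injectivity. The only stylistic difference is in the injectivity step, where the paper simply observes that once surjectivity is known the map is nonzero, so its kernel is a proper graded submodule of the graded simple $\cW$ and hence zero; your more explicit element-by-element argument is also valid but slightly longer.
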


\begin{proof}
It is clear that $\phi_i$ is a $G$-graded homomorphism. For any $g\in G$,
\[
\cW_{gZ}=\bigoplus_{z\in Z}\cW_{gz}=\cW_g\cZ=\cW_g\veps_i\oplus\Bigl(\bigoplus_{j\ne i}\cW_g\veps_j\Bigr),
\]
so $\cW_{gZ}\veps_i=\cW_g\veps_i$ and hence $\phi_i\vert_{\cW_g}:\cW_g\rightarrow (\cW\veps_i)_{gZ}\otimes g=\cW_g\veps_i\otimes g$ is surjective. Hence $\phi_i$ is surjective. Since $\cW$ is graded simple, $\phi_i$ is injective, too.
\end{proof}

\begin{remark}\label{re:DLpi1}
With the same argument, one proves that the linear map $\cD\rightarrow L_{\pi'}(\cD\veps_i)$, $d_g\mapsto d_g\veps_i\otimes g$, is an isomorphism of $G$-graded algebras. Hence, for any $1\le i,j\le m$, we have $\cD\veps_i\sim_{\pi'}\cD\veps_j$ (see \cite[Theorem 7.1.1.(ii)]{ABFP}), and, since $\FF$ is algebraically closed, it follows that $\cD\veps_i$ and $\cD\veps_j$ are $G/Z$-graded isomorphic \cite[Lemma 6.3.4]{ABFP}.
\end{remark}

Due to Theorem \ref{th:Deiej}, the next definition makes sense. We will show that it extends the corresponding concepts defined in \cite{EK_Israel} for finite-dimensional modules of  semisimple Lie algebras over an algebraically closed field of characteristic zero.

\begin{df}\label{df:inertia_Brauer}
Let $\cW$ be a $G$-graded simple left $\cR$-module such that $\dim C(\cW)$ is finite and not divisible by the characteristic of $\FF$. 
Denote $\cD=C(\cW)$ and let $\cZ$ be the center of $\cD$ and $Z$ be the support of $\cZ$.
\begin{itemize}
\item The \emph{inertia group} of $\cW$ is $K_{\cW}\bydef\{\chi\in \wh{G}:\chi(z)=1\ \forall z\in Z\}$ (that is, $K_{\cW}=Z^\perp$).

\item The \emph{(graded) Brauer invariant} of $\cW$ is the isomorphism class of the $G/Z$-graded division algebra $\cD\veps$, where $\veps$ is any primitive idempotent of $\cZ$.

\item The \emph{(graded) Schur index} of $\cW$ is the degree of the matrix algebra $\cD\veps$.
\end{itemize}
\end{df}

The Brauer invariant of $\cW$ is an element of $B_{G/Z}(\FF)$, a graded version of Brauer group defined in \cite{PP}. In general, for a field $\FF$ and an abelian group $G$, the group $B_G(\FF)$ consists of the equivalence classes of finite-dimensional associative $\FF$-algebras that are central, simple, and $G$-graded, where $\cA_1\sim\cA_2$ if and only if there exist finite-dimensional $G$-graded $\FF$-vector spaces $\cV_1$ and $\cV_2$ such that $\cA_1\otimes\End(\cV_1)\simeq\cA_2\otimes\End(\cV_2)$ as $G$-graded algebras. Here the symbol $\otimes$ denotes the usual (untwisted) tensor product of $\FF$-algebras, equipped with the natural $G$-grading. This tensor product induces a group structure on the set of equivalence classes: $[\cA_1][\cA_2]\bydef[\cA_1\otimes\cA_2]$. 

Every class $[\cA]$ contains a unique graded division algebra (up to isomorphism). Indeed, there exist a $G$-graded division algebra $\cD$ and a $G$-graded right $\cD$-module $\cW$ such that $\cA$ is graded isomorphic to $\End_\cD(\cW)$ (see e.g. \cite[Theorem 2.6]{EK_mon}), and such $\cD$ is unique up to graded isomorphism (\cite[Theorem 2.10]{EK_mon}). Since $\cD$ is a graded division algebra, we can find a $\cD$-basis $\{w_1,\ldots,w_s\}$ of $\cW$ that consists of homogeneous elements. Let $\wt{\cW}=\FF w_1\oplus\cdots\oplus\FF w_s$. Then $\wt{\cW}$ is a $G$-graded vector space, and the map
\[
\wt{\cW}\otimes_\FF\cD\to\cW,\;
w\otimes d\mapsto wd,
\]
is a graded isomorphism. Thus we can assume $\cW=\wt{\cW}\otimes_\FF\cD$ and hence identify
\begin{equation}\label{eq:EndDW}
\End_\cD(\cW)\simeq\End_\FF(\wt{\cW})\otimes_\FF\cD.
\end{equation}
Now the isomorphism $\cA\simeq\End(\wt{\cW})\otimes\cD$ implies $[\cA]=[\cD]$, and the uniqueness of $\cD$ mentioned above implies that $[\cD_1]=[\cD_2]$ if and only if $\cD_1\simeq\cD_2$ as graded algebras.

In general, the classical Brauer group $B(\FF)$ is contained in $B_G(\FF)$ as the classes of division algebras with trivial $G$-grading. If $\FF$ is algebraically closed (as we assume in this section) then, for any abelian group $G$, the Brauer group $B_G(\FF)$ is isomorphic to the group of alternating continuous bicharacters of the pro-finite group $\wh{G_0}$ where $G_0$ is the torsion subgroup of $G$ if $\chr{\FF}=0$ and the $p'$-torsion subgroup of $G$ if $\chr{\FF}=p>0$ (that is, the set of all elements whose order is finite and coprime with $p$) --- see \cite[\S 2]{EK_Israel}. 

The next result shows that the inertia group is determined by any simple (ungraded) submodule of $\cW$, and our present definition agrees with the one given in \cite{EK_Israel}.

\begin{proposition}\label{pr:inertia}
Let $(\cW,\cF)$ be an object of $\frN(\pi)$ and let $\cV$ be a simple (ungraded) submodule of $\cW$. Then the inertia group of $\cW$ is given by
\[
K_\cW=\{\chi\in\wh{G}: \cV^{\alpha_{\chi}}\ \text{is isomorphic to}\ \cV\}.
\]
\end{proposition}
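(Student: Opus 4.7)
The strategy is to reduce the assertion to the already-established Corollary \ref{co:V1alpha}, which handles the particular case $\cV=\cV^1$. Since $\cW$ is completely reducible as an ungraded module by Theorem \ref{th:Wcomp.red.}, with its isotypic decomposition involving exactly $m$ distinct simple constituents $\cV^1,\ldots,\cV^m$, any simple submodule $\cV$ of $\cW$ is isomorphic to $\cV^j$ for a unique $j\in\{1,\ldots,m\}$. Since the statement to prove depends only on the isomorphism class of $\cV$, it suffices to treat the case $\cV=\cV^j$.

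I would then express $\cV^j$ as a twist of $\cV^1$ by an automorphism of the form $\alpha_\chi$. Because $\FF$ is algebraically closed, the character $\chi_j\in\wh{H}$ extends to some $\tilde\chi_j\in\wh{G}$. By Theorem \ref{th:Wcomp.red.}, Corollary \ref{co:ci_as_twists}(ii) and Proposition \ref{pr:Vxi} together, we have a $\wb{G}$-graded (and in particular ungraded) isomorphism
\[
\cV^j\ \cong\ \cW/\cW\ker(\rho_{\chi_j})\ \cong\ (\cV^1)^{\alpha_{\tilde\chi_j}}.
\]
A direct computation from Equation \eqref{eq:alpha_chi} gives the multiplicative identity $\alpha_{\chi_1}\alpha_{\chi_2}=\alpha_{\chi_1\chi_2}$ for $\chi_1,\chi_2\in\wh{G}$, and from the very definition of the twist one checks that $(\cU^\alpha)^\beta=\cU^{\alpha\beta}$ for any left $\cR$-module $\cU$ and automorphisms $\alpha,\beta$ of $\cR$. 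Consequently, for any $\chi\in\wh{G}$,
\[
(\cV^j)^{\alpha_\chi}\ \cong\ \bigl((\cV^1)^{\alpha_{\tilde\chi_j}}\bigr)^{\alpha_\chi}\ =\ (\cV^1)^{\alpha_{\tilde\chi_j}\alpha_\chi}\ =\ (\cV^1)^{\alpha_{\tilde\chi_j\chi}}.
\]

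Applying Corollary \ref{co:V1alpha} on both sides of the last display, $(\cV^1)^{\alpha_{\tilde\chi_j\chi}}$ is isomorphic to $\cV^j\cong(\cV^1)^{\alpha_{\tilde\chi_j}}$ if and only if $(\tilde\chi_j\chi)\vert_Z=\tilde\chi_j\vert_Z$, which is equivalent to $\chi\vert_Z=1$, i.e., $\chi\in Z^\perp=K_\cW$. This yields the desired equality of sets. The proof presents no real obstacle beyond verifying the composition rule $\alpha_{\chi_1}\alpha_{\chi_2}=\alpha_{\chi_1\chi_2}$ and correctly invoking Corollary \ref{co:V1alpha}; the content has effectively been prepared by the preceding results, and the present proposition is just the translation of that corollary from the distinguished simple constituent $\cV^1$ to an arbitrary simple submodule of $\cW$.
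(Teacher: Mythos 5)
Your proof is correct and follows essentially the same route as the paper's: reduce to the case $\cV\cong\cV^1$ and invoke Corollary~\ref{co:V1alpha}. The paper simply asserts this reduction with a ``without loss of generality,'' whereas you justify it explicitly via $\cV^j\cong(\cV^1)^{\alpha_{\tilde\chi_j}}$ and the composition rules $\alpha_{\chi_1}\alpha_{\chi_2}=\alpha_{\chi_1\chi_2}$, $(\cU^\alpha)^\beta=\cU^{\alpha\beta}$, which is a welcome expansion of the same idea.
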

\begin{proof}
We may assume, without loss of generality, that $\cV=\cV^1$ in Theorem \ref{th:Wcomp.red.}. Then the result follows from Corollary \ref{co:V1alpha}.
\end{proof}

In the next section, we will show that the Brauer invariant, too, is determined by any simple submodule of $\cW$, under the assumption that $\cW$ is finite-dimensional.


\section{Graded simple modules with simple centralizers}\label{se:simple_centralizer}

\emph{As in the previous section, we assume here that $H$ is finite and $\FF$ is algebraically closed of characteristic not dividing $\lvert H\rvert$.}

Let $(\cW,\cF)$ be an object of $\frN(\pi)$. Then $\cD=C(\cW)$ is finite-dimensional (Corollary \ref{co:WDfinite}). Let $\cZ$ be the center of $\cD$. As we have seen in the previous section --- in particular, Theorem \ref{th:Deiej}, Proposition \ref{pr:WLpi1} and Remark \ref{re:DLpi1} --- the study of  $(\cW,\cF)$ reduces to $(\cW\veps,\cF\veps)$, which is an object of $\frN(\pi'')$ by Corollary \ref{co:Wei}, where $\veps$ is any primitive idempotent of $\cZ$, $Z$ is the support of $\cZ$, and $\pi''$ is the natural homomorphism $G/Z\to G/H$ (with kernel $H/Z$). 

So, for the most part of this section, we will assume that $\cZ=\FF 1$, or, equivalently, that $\cD$ is simple --- see Proposition \ref{pr:DTHbeta} --- and will return to the general case at the end. Under this assumption, $\cD$ is isomorphic to the twisted group algebra $\FF^\sigma T$ where $T$ is the support of $\cD$ and $\sigma:T\times T\rightarrow \FF^\times$ is a $2$-cocycle such that $\sigma(h_1,h_2)=1$, for all $h_1,h_2\in H$, and the alternating bicharacter $\beta:T\times T\rightarrow \FF^\times$, defined by $\beta(t_1,t_2)=\sigma(t_1,t_2)\sigma(t_2,t_1)^{-1}$, is nondegenerate. It follows that $\lvert T\rvert=\lvert H\rvert^2$ and the characteristic of $\FF$ does not divide $\lvert T\rvert$.

First we will construct the simple (ungraded) left and right $\cD$-modules, as well as the associated Morita context relating $\FF$ and $\cD$, in terms of the model $\cD=\FF^\sigma T$. It will be apparent from our construction that these simple modules admit a compatible $T/H$-grading. As before, we denote by $c_t$ the basis elements of $\FF^\sigma T$ so that $c_{t_1}c_{t_2}=\sigma(t_1,t_2)c_{t_1t_2}$ for all $t_1,t_2\in T$.

\begin{proposition}\label{pr:TM}
Let $\wb{T}=T/H$ and write $\bar t=tH$ for $t\in T$. Pick a section $\xi:\wb{T}\rightarrow T$ of the natural homomorphism $T\to\wb{T}$.
\begin{romanenumerate}
\item Let $\cM$ be a vector space with basis $e_{\bar t}$ labeled by the elements $\bar t\in\wb{T}$ (so $\dim\cM=\lvert T/H\rvert=\lvert H\rvert$). Define a left $\cD$-action on $\cM$ by
\begin{equation}\label{eq:t1xt2}
c_{t_1}\cdot e_{\bar t_2}
=\sigma\bigl(t_1,\xi(\bar t_2)\bigr)
 \sigma\bigl(\xi(\overline{t_1t_2}),t_1\xi(\bar t_2)\xi(\overline{t_1t_2})^{-1}\bigr)^{-1}e_{\overline{t_1t_2}},
\end{equation}
for all $t_1,t_2\in T$. 
Then $\cM$ is the unique, up to isomorphism, simple left $\cD$-module.
Moreover, $\cM$ becomes a $\wb{T}$-graded left $\cD$-module if we declare $e_{\bar t}$ to have degree $\bar t$.

\item Let $\varrho:\cD\rightarrow \End_\FF(\cM)$ be the homomorphism determined by the left $\cD$-action. Then $\varrho$ is an isomorphism of $\wb{T}$-graded algebras
(with respect to the $\wb{T}$-grading on $\cD$ induced by the natural homomorphism $T\to\wb{T}$).

\item Let $\cM^*=\Hom_\FF(\cM,\FF)$ with a right $\cD$-action given by
\[
(f\cdot d)(x)\bydef f(d\cdot x),
\]
for all $f\in\cM^*$, $d\in\cD$ and $x\in\cM$. Then $\cM^*$ is the unique, up to isomorphism, simple right $\cD$-module. Moreover, the $\wb{T}$-grading on $\cM$ determines a $\wb{T}$-grading on $\cM^*$ such that the evaluation $\cM^*\otimes_\cD\cM\to\FF$ is a graded map (with respect to the trivial $\wb{T}$-grading on $\FF$).

\item The linear map
\[
\begin{split}
\cM\otimes_\FF\cM^*&\longrightarrow \cD\\
  x\otimes f&\mapsto \varrho^{-1}(x.f),
\end{split}
\]
is an isomorphism of $\wb{T}$-graded $(\cD,\cD)$-bimodules, where $x.f:\cM\rightarrow\cM$ is the map $y\mapsto f(y)x$, for all $x,y\in\cM$ and $f\in\cM^*$.
\end{romanenumerate}
\end{proposition}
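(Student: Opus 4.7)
The plan is to identify $\cM$ with the induced module $\cD\otimes_{\FF K}\FF$, where $\FF K=\bigoplus_{h\in H}\FF c_h$ is the graded subfield of $\cD$ corresponding to $H$ (a subalgebra isomorphic to $\FF H$ because $\sigma|_{H\times H}=1$), and $\FF$ is the trivial left $\FF K$-module via the augmentation. Setting $e_{\bar t}\bydef c_{\xi(\bar t)}\otimes 1$ for each $\bar t\in \wb{T}$ gives a basis of $\cD\otimes_{\FF K}\FF$, and a direct calculation using the cocycle relation $c_{t_1}c_{\xi(\bar t_2)}=\sigma(t_1,\xi(\bar t_2))c_{t_1\xi(\bar t_2)}$ together with $t_1\xi(\bar t_2)=\xi(\overline{t_1t_2})h$ for the unique $h=\xi(\overline{t_1t_2})^{-1}t_1\xi(\bar t_2)\in H$ recovers the formula \eqref{eq:t1xt2}. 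Thus the action is well defined (module axioms are automatic from the induced construction) and the decomposition $\cM=\bigoplus_{\bar t\in\wb{T}}\FF e_{\bar t}$ is manifestly a $\wb{T}$-grading compatible with the $\wb{T}$-grading on $\cD$.

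For simplicity of $\cM$ and part (ii), I will use Proposition \ref{pr:DTHbeta}(v): since $\cZ=\FF 1$ (equivalently, $\beta$ is nondegenerate), $\cD$ is a central simple algebra of dimension $|T|=|H|^2$, hence isomorphic to $M_{|H|}(\FF)$. Its unique simple module has dimension $|H|=|T/H|=\dim\cM$, so $\cM$ is simple. Then $\varrho:\cD\to\End_\FF(\cM)$ is injective (as $\cD$ is simple) between spaces of equal dimension $|H|^2$, so it is an isomorphism; it is $\wb{T}$-graded by the form of \eqref{eq:t1xt2}.

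For (iii), the standard fact that a simple Artinian ring has a unique simple right module (up to isomorphism) identifies $\cM^*$ as that simple right module. For the $\wb{T}$-grading, the natural choice is $(\cM^*)_{\bar t}\bydef\{f\in\cM^*:f|_{\cM_{\bar s}}=0\ \forall\bar s\ne \bar t^{-1}\}$, so that $(\cM^*)_{\bar t}$ is canonically $(\cM_{\bar t^{-1}})^*$. Compatibility with the right action is a one-line check: for $f\in(\cM^*)_{\bar t}$, $d\in\cD_{\bar s}$, and $x\in\cM_{\bar r}$, $(f\cdot d)(x)=f(d\cdot x)$ vanishes unless $\bar s\bar r=\bar t^{-1}$, i.e.\ $\bar r=(\bar t\bar s)^{-1}$, whence $f\cdot d\in(\cM^*)_{\bar t\bar s}$. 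Since $\cM$ is simple over the central simple algebra $\cD$, evaluation $\cM^*\otimes_\cD\cM\to\FF$ is an isomorphism of one-dimensional spaces; exhibiting one nonzero element $f\otimes e_{\bar e}$ with $f\in(\cM^*)_e$ (the dual basis vector of $e_{\bar e}$) shows that this one-dimensional space sits in degree $e$, so evaluation is graded.

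Finally, for (iv), the classical isomorphism $\cM\otimes_\FF\cM^*\to\End_\FF(\cM)$, $x\otimes f\mapsto x.f$, is a $(\cD,\cD)$-bimodule map; composing with $\varrho^{-1}$ (a graded bimodule isomorphism by (ii)) yields an isomorphism to $\cD$. Gradedness reduces to checking that for homogeneous $x\in\cM_{\bar t}$ and $f\in(\cM^*)_{\bar s}$, the operator $x.f$ kills $\cM_{\bar r}$ unless $\bar r=\bar s^{-1}$ and sends $\cM_{\bar s^{-1}}$ into $\cM_{\bar t}$, hence is homogeneous of degree $\bar t\bar s$ in $\End_\FF(\cM)$. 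The only real subtlety in the whole plan is the correct bookkeeping of the $\wb{T}$-grading on $\cM^*$ and the verification that it makes the evaluation land in the trivial component; everything else follows by viewing $\cM$ as an induced module and invoking that $\cD$ is central simple.
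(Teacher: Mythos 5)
Your proof is correct and follows essentially the same path as the paper: you realize $\cM$ via the induced module $\cD\otimes_{\FF K}\FF$, which is naturally isomorphic to the paper's $\cD/\cD\ker(\rho)$, and both approaches derive formula \eqref{eq:t1xt2} from the regular module so that the module axioms come for free; the remaining parts (ii)--(iv) are deduced in both cases from $\cD\cong M_{\lvert H\rvert}(\FF)$. The only real divergence is that the paper establishes simplicity of $\cM$ by invoking Theorem \ref{th:specializations} applied to the regular module $\cD$, whereas you sidestep that machinery with a direct dimension count ($\dim\cM=\lvert H\rvert$ equals the dimension of the unique simple $M_{\lvert H\rvert}(\FF)$-module), which is a slightly more elementary and self-contained route.
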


\begin{proof} 
Let $\cF$ be the subalgebra of $\cD$ spanned by the elements $c_h$, $h\in H$. Since the $2$-cocycle $\sigma$ is trivial on $H$, we have $\cF\simeq\FF H$. Actually, $\cF$ is a maximal graded subfield of $\cD$. Let $\rho:\cF\rightarrow \FF$ the augmentation map: $\rho(c_h)=1$ for all $h\in H$. Clearly, for any $t\in T$, we have
\[
\cD_{tH}=\bigoplus_{h\in H}\cD_{th}=\cD_{\xi(\bar t)}\cF,
\]
and
\[
\cD=\bigoplus_{\bar t\in\wb{T}}\cD_{\xi(\bar t)H}
=\bigoplus_{\bar t\in\bar T}\cD_{\xi(\bar t)}\cF.
\]
As a left $\cD$-module, $\cD$ is $T$-graded simple with centralizer equal to $\cD$ (right action), and $\cF$ is a maximal graded subfield of the centralizer. By Theorem \ref{th:specializations}, $\cD\ker(\rho)$ is a maximal submodule of $\cD$. Since $\cF=\FF 1\oplus\ker(\rho)$, we obtain
\[
\cD=\bigoplus_{\bar t\in\wb{T}}\cD_{\xi(\bar t)}\cF
 =\Bigl(\bigoplus_{\bar t\in\wb{T}}\cD_{\xi(\bar t)}\Bigr)\oplus \cD\ker(\rho),
\]
and, similarly to the proof of Proposition \ref{pr:twisted}, we may identify the simple $\cD$-module $\cD/\cD\ker(\rho)$ with 
$\bigoplus_{\bar t\in\wb{T}}\cD_{\xi(\bar t)}$ where the $\cD$-action is given by Equation \eqref{eq:rgwk} as follows:
\[
c_{t_1}\cdot c_{\xi(\bar t_2)}\bydef\text{projection of $c_{t_1}c_{\xi(\bar t_2)}$ on $\bigoplus_{\bar t\in\wb{T}}\cD_{\xi(\bar t)}$ parallel to $\cD\ker(\rho)$},
\]
for any $t_1,t_2\in T$.
Now, $c_{t_1}c_{\xi(\bar t_2)}=\sigma\bigl(t_1,\xi(\bar t_2)\bigr)c_{t_1\xi(\bar t_2)}$ and there is a unique $h\in H$ such that $t_1\xi(\bar t_2)=\xi(\overline{t_1t_2})h$, so
\[
\begin{split}
c_{t_1}c_{\xi(\bar t_2)}&=\sigma\bigl(t_1,\xi(\bar t_2)\bigr)c_{\xi(\overline{t_1t_2})h}\\
&=\sigma\bigl(t_1,\xi(\bar t_2)\bigr)\sigma\bigl(\xi(\overline{t_1t_2}),h\bigr)^{-1}c_{\xi(\overline{t_1t_2})}c_h\\
&=\sigma\bigl(t_1,\xi(\bar t_2)\bigr)\sigma\bigl(\xi(\overline{t_1t_2}),h\bigr)^{-1}c_{\xi(\overline{t_1t_2})}\\
&\qquad+\sigma\bigl(t_1,\xi(\bar t_2)\bigr)\sigma\bigl(\xi(\overline{t_1t_2}),h\bigr)^{-1}c_{\xi(\overline{t_1t_2})}(c_h-1)\\
&\in \Bigl(\bigoplus_{\bar t\in\wb{T}}\cD_{\xi(\bar t)}\Bigr)\oplus \cD\ker(\rho).
\end{split}
\]
Therefore,
\[
\begin{split}
c_{t_1}\cdot c_{\xi(\bar t_2)}&=\sigma\bigl(t_1,\xi(\bar t_2)\bigr)\sigma\bigl(\xi(\overline{t_1t_2}),h\bigr)^{-1}c_{\xi(\overline{t_1t_2})}\\
&=\sigma\bigl(t_1,\xi(\bar t_2)\bigr)\sigma\bigl(\xi(\overline{t_1t_2}),t_1\xi(\bar t_2)\xi(\overline{t_1t_2})^{-1}\bigr)^{-1}c_{\xi(\overline{t_1t_2})}.
\end{split}
\]
Thus, the linear map given by $c_{\xi(\bar t)}\mapsto e_{\bar t}$, for all $\bar t\in\wb{T}$, is an isomorphism of $\cD$-modules $\cD/\cD\ker(\rho)\rightarrow \cM$.
All remaining assertions are clear because, as an ungraded algebra, $\cD$ is isomorphic to the matrix algebra $M_n(\FF)$ where $n=\lvert H\rvert$. 
\end{proof}

Given a graded division algebra $\cD$ that is finite-dimensional and simple, there are ways to choose the $2$-cocycle $\sigma$ (within the same cohomology class), the subgroup $H$ of the support $T$, and the section $\xi:T/H\to T$ that make Equation \eqref{eq:t1xt2} in Proposition \ref{pr:TM} much simpler. The alternating bicharacter $\beta$ is nondegenerate, so there are maximal isotropic subgroups $A$ and $B$ of $T$ such that $T=A\times B$ (see e.g. \cite[Section 2.2]{EK_mon}). Then we may multiply the basis elements $c_a$, $a\in A$, and $c_b$, $b\in B$, by suitable nonzero scalars in such a way that
\[
c_{a_1}c_{a_2}=c_{a_1a_2}\quad\text{and}\quad c_{b_1}c_{b_2}=c_{b_1b_2}
\]
for all $a_1,a_2\in A$ and $b_1,b_2\in B$. For any $a\in A$ and $b\in B$ define $c_{ab}\bydef c_ac_b$. Then, for all $a_1,a_2\in A$ and $b_1,b_2\in B$, we have 
\[
c_{a_1b_1}c_{a_2b_2}=c_{a_1}c_{b_1}c_{a_2}c_{b_2}=\beta(b_1,a_2)c_{a_1a_2}c_{b_1b_2}=\beta(b_1,a_2)c_{a_1a_2b_1b_2}.
\]
Therefore, $\cD$ is isomorphic to the twisted group algebra $\FF^\sigma T$ where the $2$-cocycle is given by $\sigma(a_1b_1,a_2b_2)=\beta(b_1,a_2)$.

Now we take $H=A$ and the `natural' section $\xi(\overline{ab})=b$, for all $a\in A$ and $b\in B$. Then the simple module $\cM$ becomes
\[
\cM=\bigoplus_{b\in B}\FF e_b,
\]
and Equation \ref{eq:t1xt2} simplifies as follows:
\[
\begin{split}
c_{ab}\cdot e_{b'}
 &=\sigma(ab,b')\sigma\bigl(bb',abb'(bb')^{-1}\bigr)^{-1}e_{bb'}\\
 &=\sigma(ab,b')\sigma(bb',a)^{-1}e_{bb'}\\
 &=\beta(bb',a)^{-1}e_{bb'}\\
 &=\beta(a,bb')e_{bb'},
\end{split}
\]
for all $a\in A$ and $b,b'\in B$. 

To summarize: we may identify $\cD$ with the `smash product' $\FF A\#\FF B$, that is, vector space $\FF A\otimes\FF B$ where multiplication is given by
\[
(a_1\otimes b_1)(a_2\otimes b_2)=\beta(b_1,a_2)(a_1a_2\otimes b_1b_2),
\] 
and $\cM$ with vector space $\FF B$ where the left $\cD$-action is given by
\[
(a\otimes b)\cdot b'=\beta(a,bb')\,bb',
\]
for all $a,a_1,a_2\in A$ and $b,b',b_1,b_2\in B$. (This model of $\cM$ appeared in \cite[Remark 18]{EK_Israel}.)
We also observe that the dual right $\cD$-module $\cM^*$ can be identified with $\FF A$ through the linear map $\FF A\rightarrow \cM^*$, $a\mapsto f_a$, where $f_a(b)=\beta(a,b)$. The right $\cD$-action on $\FF A$ is given by
\[
a'\cdot(a\otimes b)=\beta(aa',b)\,aa',
\]
for all $a,a'\in A$ and $b\in B$.

Note that not every maximal isotropic subgroup of $(T,\beta)$ admits a maximal isotropic complement. 

\begin{example}
Assuming $\chr\FF\ne 2$, let $T=\ZZ_4\times\ZZ_4$ and $\beta\bigl((x_1,y_1),(x_2,y_2)\bigr)=i^{y_1x_2}$ for $x_1,x_2,y_1,y_2\in\ZZ_4=\ZZ/4\ZZ$, where $i=\sqrt{-1}$. Then the maximal isotropic subgroup $2\ZZ/4\ZZ\times 2\ZZ/4\ZZ$ does not admit any complement.
\end{example}

Our main result in this section is the following:

\begin{theorem}\label{th:Dcentral}
Let $(\cW,\cF)$ be a finite-dimensional object in $\frN(\pi)$ such that $C(\cW)$ is simple. Let $\rho:\cF\rightarrow \FF$ be a homomorphism of unital algebras and let $\cV=\cW/\cW\ker(\rho)$ be the canonical central image. Denote by $\varrho_\cV:\cR\rightarrow\End_\FF(\cV)$ the associated representation.
Then there exists a unique $G$-grading on $\End_\FF(\cV)$ such that $\varrho_\cV$ becomes a $G$-graded homomorphism, and the graded Brauer invariant of $\cW$ (see Definition \ref{df:inertia_Brauer}) is precisely $[\End_\FF(\cV)]$.
\end{theorem}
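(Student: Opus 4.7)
First, since $\cD\bydef C(\cW)$ is simple, Proposition \ref{pr:DTHbeta}(v) gives $\cZ\bydef Z(\cD)=\FF 1$, so the support $Z$ of $\cZ$ is trivial and $G/Z=G$; hence the Brauer invariant of $\cW$ is the class $[\cD]\in B_G(\FF)$. The integer $m=\dim\cZ$ from Theorem \ref{th:Wcomp.red.} is $1$, so $\cW$ is isomorphic, as an ungraded $\cR$-module, to a direct sum $n\cV$ of copies of $\cV$. By Theorem \ref{th:specializations}(v), $\cV$ is simple and central, so Jacobson density (in this setting, Burnside) yields $\varrho_\cV(\cR)=\End_\FF(\cV)$.

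For the $G$-grading on $\End_\FF(\cV)$, the key observation is that the two-sided annihilators coincide: $\ker\varrho_\cW=\ker\varrho_{n\cV}=\ker\varrho_\cV$. The representation $\varrho_\cW:\cR\to\End_\FF(\cW)$ is $G$-graded with respect to the natural $G$-grading on $\End_\FF(\cW)$, so its kernel is a $G$-graded ideal of $\cR$; therefore so is $\ker\varrho_\cV$. Transferring the resulting quotient grading through the isomorphism $\cR/\ker\varrho_\cV\simeq\End_\FF(\cV)$ produces a $G$-grading on $\End_\FF(\cV)$ for which $\varrho_\cV$ is a graded homomorphism. Uniqueness is immediate: any such grading must satisfy $\End_\FF(\cV)_g\supseteq\varrho_\cV(\cR_g)$, and since $\varrho_\cV$ is surjective and the right-hand side decomposes as a direct sum over $g\in G$, equality follows.

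To identify the Brauer class, note that density also gives $\varrho_\cW(\cR)=\End_\cD(\cW)$, so we obtain $G$-graded algebra isomorphisms $\End_\FF(\cV)\simeq\cR/\ker\varrho_\cW\simeq\varrho_\cW(\cR)=\End_\cD(\cW)$, where $\End_\cD(\cW)$ carries the grading inherited as a graded subalgebra of $\End_\FF(\cW)$. Since $\cD$ is a graded division algebra and $\cW$ a finite-dimensional graded right $\cD$-module, $\cW$ admits a homogeneous $\cD$-basis $w_1,\ldots,w_s$, say of degrees $g_1,\ldots,g_s\in G$. Setting $\wt\cW=\bigoplus_{i=1}^s\FF w_i$ as a $G$-graded subspace of $\cW$, the map $\wt\cW\otimes_\FF\cD\to\cW$, $w_i\otimes d\mapsto w_id$, is a $G$-graded isomorphism of right $\cD$-modules, and it induces a $G$-graded algebra isomorphism
\begin{equation*}
\End_\cD(\cW)\simeq\End_\FF(\wt\cW)\otimes_\FF\cD,\qquad f\otimes d\mapsto\bigl((w\otimes d')\mapsto f(w)\otimes dd'\bigr).
\end{equation*}
Passing to classes in $B_G(\FF)$ and using that the tensor factor $\End_\FF(\wt\cW)$ is trivial in the Brauer group, we conclude $[\End_\FF(\cV)]=[\cD]$, which is the graded Brauer invariant of $\cW$.

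The main subtlety is the equality $\ker\varrho_\cW=\ker\varrho_\cV$: it rests essentially on the hypothesis that $\cD$ is simple (so that $m=1$ and $\cW$ is isotypic of type $\cV$), since otherwise one only obtains $\ker\varrho_\cW\subsetneq\ker\varrho_\cV$ and consequently merely a $G/Z$-grading on $\End_\FF(\cV)$, which is the situation of the corollary to be deduced afterwards. Beyond this, the bookkeeping of degrees in the identification $\End_\cD(\cW)\simeq\End_\FF(\wt\cW)\otimes_\FF\cD$ is routine once a homogeneous $\cD$-basis of $\cW$ is fixed.
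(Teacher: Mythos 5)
Your proof is correct, but it takes a genuinely different route from the paper's. The paper constructs an explicit chain of $G$-graded isomorphisms
$\Phi\colon\End_\cD(\cW)\xrightarrow{\sim}\End_\FF(\wt\cW)\otimes_\FF\cD\xrightarrow{\sim}\End_\FF(\wt\cW)\otimes_\FF\End_\FF(\cM)\xrightarrow{\sim}\End_\FF(\cV)$
built from the Morita context of Proposition~\ref{pr:TM} (including the identification $\cV\simeq\wt\cW\otimes_\FF\cM$), and then defines the grading on $\End_\FF(\cV)$ by transport along $\Phi$ after noting $\varrho_\cV=\Phi\circ\varrho_\cW$. You instead bypass $\Phi$ entirely: since $\cD$ is simple, $m=1$ in Theorem~\ref{th:Wcomp.red.} so $\cW$ is isotypic of type $\cV$ as an ungraded module, hence $\ker\varrho_\cW=\ker\varrho_\cV$; the latter is therefore a $G$-graded ideal (being the kernel of the graded map $\varrho_\cW$), so $\cR/\ker\varrho_\cV\simeq\End_\FF(\cV)$ inherits a quotient $G$-grading, and the $G$-graded isomorphism $\End_\FF(\cV)\simeq\cR/\ker\varrho_\cW\simeq\End_\cD(\cW)\simeq\End_\FF(\wt\cW)\otimes_\FF\cD$ gives $[\End_\FF(\cV)]=[\cD]$. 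The uniqueness argument via surjectivity of $\varrho_\cV$ is also fine. Your argument is shorter and does not need the model of $\cM$ from Proposition~\ref{pr:TM}; on the other hand, the paper's explicit identification $\cV\simeq\wt\cW\otimes_\FF\cM$ is precisely what is reused in the proof of Corollary~\ref{co:Dcentral}(iii) to endow $\cV$ with a compatible $G/H$-grading, so in the larger context of the paper that machinery is not wasted. One small stylistic note: in your uniqueness step, the direct sum assertion $\End_\FF(\cV)=\bigoplus_g\varrho_\cV(\cR_g)$ follows from the containment $\varrho_\cV(\cR_g)\subseteq\End_\FF(\cV)_g$ (the subspaces sit inside pairwise-independent homogeneous components) together with surjectivity --- it does not require quoting the gradedness of the kernel again, though that is of course also available.
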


\begin{proof}
Since  $\cD=C(\cW)$ is simple, the graded Brauer invariant of $\cW$ is just $[\cD]$. 
Since $\cW$ is finite-dimensional and graded simple, by graded density (Theorem \ref{th:graded_central}), the image of the associated representation $\varrho_\cW:\cR\rightarrow \End_\FF(\cW)$ is equal to $\End_\cD(\cW)$. Thus, we may regard $\varrho_\cW$ as a surjective $G$-graded homomorphism $\cR\rightarrow \End_\cD(\cW)$. Pick a homogeneous $\cD$-basis of $\cW$ and let $\wt{\cW}$ be its $\FF$-span. Using the identification $\cW=\wt{\cW}\otimes_\FF\cD$, we obtain  $\End_\cD(\cW)\simeq\End_\FF(\tilde{\cW})\otimes_\FF\cD$, as in Equation \eqref{eq:EndDW}. Also,
\[
\cW\ker(\rho)=(\wt{\cW}\otimes_\FF\cD)\ker(\rho)=\wt{\cW}\otimes_\FF\bigl(\cD\ker(\rho)\bigr).
\]
On the other hand, $\cD\simeq\FF^\sigma T$, so we may apply Proposition \ref{pr:TM} and consider the simple $G/H$-graded left $\cD$-module $\cM\simeq \cD/\cD\ker(\rho)$ defined there. The composition
\[
\wt{\cW}\otimes_\FF\cD
\twoheadrightarrow\wt{\cW}\otimes_\FF\bigl(\cD/\cD\ker(\rho)\bigr)
\xrightarrow{\sim}\wt{\cW}\otimes_\FF\cM
\]
is a $G/H$-graded homomorphism from $\cW$ onto $\wt{\cW}\otimes_\FF\cM$ with kernel $\wt{\cW}\otimes_\FF\cD\ker(\rho)=\cW\ker(\rho)$. Hence, $\cV=\cW/\cW\ker(\rho)$ is $G/H$-graded isomorphic to $\wt{\cW}\otimes_\FF\cM$.

Since the $\cR$-module $\cV$ is finite-dimensional and simple, the associated representation $\varrho_\cV$ is surjective. According to the above analysis, it can be obtained as the composition of $\cR\xrightarrow{\varrho_\cW}\End_\cD(\cW)$ and the isomorphism $\End_\cD(\cW)\xrightarrow{\Phi}\End_\FF(\cV)$, where $\Phi$ is the composition
\[
\End_\cD(\cW)\xrightarrow{\sim}\End_\FF(\wt{\cW})\otimes\cD
\xrightarrow{\sim}\End_\FF(\wt{\cW})\otimes\End_\FF(\cM)
\xrightarrow{\sim}\End_\FF(\wt{\cW}\otimes\cM)
\xrightarrow{\sim}\End_\FF(\cV).
\]
The isomorphism $\Phi$ induces on $\End_\FF(\cV)$ a $G$-grading, which is a refinement of the $G/H$-grading coming from $\cV$. Since $\varrho_\cW$ is $G$-graded, $\varrho_\cV=\Phi\circ\varrho_\cW$ becomes $G$-graded, too. As $\varrho_\cV$ is surjective, this grading on $\End_\FF(\cV)$ is uniquely determined: $\End_\FF(\cV)_g=\varrho_\cV(\cR_g)$ for all $g\in G$. The $G$-graded isomorphism $\Phi$ shows that $[\cD]=[\End_\FF(\cV)]$.
\end{proof}

\begin{corollary}\label{co:Dcentral} 
Let $\cW$ be a finite-dimensional $G$-graded simple left $\cR$-module such that the characteristic of $\FF$ does not divide the dimension of $C(\cW)$. Let $\cV$ be a simple (ungraded) submodule of $\cW$ and let $\varrho_\cV:\cR\rightarrow\End_\FF(\cV)$ be the associated representation. Let $Z$ be the support of the center of $C(\cW)$. 
\begin{romanenumerate}
\item There is a unique $G/Z$-grading on $\End_\FF(\cV)$ that makes $\varrho_\cV$ a $G/Z$-graded homomorphism.

\item The graded Brauer invariant of $\cW$ is precisely $[\End_\FF(\cV)]$. 

\item For any subgroup $H$ of the support of $C(\cW)$, maximal relative to the property of $\cF\bydef\bigoplus_{h\in H}C(\cW)_h$ being commutative, $\cV$ is endowed with a structure of $G/H$-graded $\cR$-module such that $\cV$ is a central image of $(\cW,\cF)$.
\end{romanenumerate}
\end{corollary}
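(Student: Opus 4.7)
The plan is to reduce parts (i)--(iii) to Theorem \ref{th:Dcentral} by passing to the $G/Z$-graded simple summand $\cW\veps_1$ of $\cW$, where $\veps_1$ is a primitive idempotent of the center $\cZ$ of $\cD\bydef C(\cW)$. First I would observe that $\cD$ is a finite-dimensional graded division algebra, hence $\cD_e=\FF 1$ and $\cW$ is graded central. Since $H$ is a subgroup, $\cF$ is closed under inverses of homogeneous elements, so $\cF$ is a maximal graded subfield of $\cD$, and by Proposition \ref{pr:Falgclosed}(ii), $\cF\simeq\FF H$. The center $\cZ$ is contained in every maximal commutative graded subalgebra of $\cD$, so $Z\subseteq H$. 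Thus $(\cW,\cF)$ is an object of $\frN(\pi)$ for $\pi:G\to G/H$, and there is a natural homomorphism $\pi'':G/Z\to G/H$.

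Writing $\cZ=\FF\veps_1\oplus\cdots\oplus\FF\veps_m$ in terms of primitive idempotents, the simple module $\cV=\bigoplus_i\cV\veps_i$ meets exactly one summand; reindex so that $\cV\subseteq\cW\veps_1$. By Corollary \ref{co:Wei} and Theorem \ref{th:Deiej}, $\cW\veps_1$ is a $G/Z$-graded simple $\cR$-module and $\cD\veps_1=C(\cW\veps_1)$ is a central simple $G/Z$-graded algebra. Any nonzero homogeneous element $x\in\cF_h$ for $h\in H$ is invertible in $\cD$, so $x\veps_1\ne 0$ lies in $(\cD\veps_1)_{hZ}$, while $\cZ\veps_1=\FF\veps_1$. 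This identifies $\cF\veps_1$ as a graded subfield of $\cD\veps_1$ with support $H/Z$, which is maximal since any enlargement of $H/Z$ to an isotropic subgroup of $T/Z$ (with respect to the nondegenerate bicharacter $\beta'$ induced by Proposition \ref{pr:DTHbeta}(ii)) would enlarge $H$ in $T$, contradicting maximality of $\cF$. Hence $\cF\veps_1\simeq\FF(H/Z)$ and $(\cW\veps_1,\cF\veps_1)$ is an object of $\frN(\pi'')$.

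Now I would apply Theorem \ref{th:Dcentral} (with $G$ replaced by $G/Z$) to $(\cW\veps_1,\cF\veps_1)$: for any unital algebra homomorphism $\rho':\cF\veps_1\to\FF$, the canonical central image $\cV'\bydef\cW\veps_1/(\cW\veps_1)\ker(\rho')$ is $G/H$-graded, $\End_\FF(\cV')$ carries a unique $G/Z$-grading making $\varrho_{\cV'}$ a $G/Z$-graded homomorphism, and $[\End_\FF(\cV')]=[\cD\veps_1]$ in $B_{G/Z}(\FF)$. Since the center of $\cD\veps_1$ has trivial support in $G/Z$, Proposition \ref{pr:ViVj} applied inside $\frN(\pi'')$ shows all simple submodules of $\cW\veps_1$ are mutually isomorphic, so $\cV\simeq\cV'$ as ungraded $\cR$-modules. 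Fix such an isomorphism and transfer the structures to $\cV$: part (i) follows because surjectivity of $\varrho_\cV$ forces $\End_\FF(\cV)_{gZ}=\varrho_\cV(\cR_{gZ})$, making the grading unique; part (ii) holds by Definition \ref{df:inertia_Brauer}, which defines the graded Brauer invariant of $\cW$ to be $[\cD\veps_1]$.

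To finish (iii), I define $\rho:\cF\to\FF$ by $\rho(c)\bydef\rho'(c\veps_1)$ and $\gamma:\cW\to\cV$ as the composition of the $G$-graded projection $\cW\twoheadrightarrow\cW\veps_1$, the $G/H$-graded canonical map $\cW\veps_1\twoheadrightarrow\cV'$, and the transferred isomorphism $\cV'\xrightarrow{\sim}\cV$. Centrality of $\veps_1$ in $\cD$ gives $(wc)\veps_1=(w\veps_1)c$ for all $w\in\cW$ and $c\in\cF$, whence $\gamma(wc)=\gamma(w)\rho(c)$, so $\gamma$ is a $\rho$-specialization of $(\cW,\cF)$ realizing $\cV$ as a central image. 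The main obstacle I anticipate is the verification that $\cF\veps_1$ is a \emph{maximal} graded subfield of $\cD\veps_1$ (not merely a graded subfield), which is what allows Theorem \ref{th:Dcentral} to apply; this is handled via the correspondence between maximal isotropic subgroups of $T$ and of $T/Z$ furnished by Proposition \ref{pr:DTHbeta}(ii).
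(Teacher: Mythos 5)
Your proposal is correct and follows essentially the same route as the paper: both pass to the isotypic component $\cW\veps_1$ containing $\cV$, note that it is $G/Z$-graded simple with simple centralizer $\cD\veps_1$ (Corollary \ref{co:Wei}, Theorem \ref{th:Deiej}), and then invoke Theorem \ref{th:Dcentral}. You supply more of the bookkeeping that the paper leaves implicit --- in particular, the verification via Proposition \ref{pr:DTHbeta}(ii) that $\cF\veps_1$ is a \emph{maximal} graded subfield of $\cD\veps_1$ (needed so that $(\cW\veps_1,\cF\veps_1)$ is genuinely an object of $\frN(\pi'')$), and the explicit check that the composite $\gamma:\cW\twoheadrightarrow\cW\veps_1\twoheadrightarrow\cV'\xrightarrow{\sim}\cV$ satisfies the $\rho$-specialization conditions --- but these are elaborations of the same argument, not a different one.
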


\begin{proof}
Let $\veps_1,\ldots,\veps_m$ be the primitive idempotents of the center $\cZ$ of $C(\cW)$. Then $\cV$ is a simple submodule of $\cW\veps_i$ for a unique $i=1,\ldots,m$. But $\cW\veps_i$ is a $G/Z$-graded simple left $\cR$-module (Corollary \ref{co:Wei}) with centralizer $C(\cW)\veps_i$, which is simple. The graded Brauer invariant of $\cW$ is the $G/Z$-graded isomorphism class of $C(\cW)\veps_i$, and Theorem \ref{th:Dcentral} gives (i) and (ii).

Part (iii) follows from the fact that $\cV$ is isomorphic to $\wt{\cW}\otimes_\FF\cM$ (see the proof of Theorem \ref{th:Dcentral}) where $\cM$ is a simple left $C(\cW)\veps_i$-module graded by $(G/Z)/(H/Z)\simeq G/H$.
\end{proof}

In particular, Corollary \ref{co:Dcentral} shows that our definition of graded Brauer invariant agrees with the one given in \cite{EK_Israel} for the case considered therein.

\smallskip

Under the hypotheses of Theorem \ref{th:Dcentral}, $(\cW,\cF)$ is isomorphic to $\bigl(L_\pi(\cV),L_\pi(\FF 1)\bigr)$ in $\frN(\pi)$ (Theorem \ref{th:MpiNpi}). Also, as an ungraded module, $\cW\simeq\wt{\cW}\otimes\cD\simeq (\wt{\cW}\otimes\cM)\otimes\cM^*$ is a direct sum of $\dim\cM^*=\lvert H\rvert$ copies of $\cV\simeq \wt{\cW}\otimes\cM$.

Let us now drop the assumption that $\cD=C(\cW)$ is simple. Then, for any primitive idempotent $\veps$ in the center $\cZ$ of $\cD$, $\cW\veps$ is $G/Z$-graded simple, where $Z$ is the support of $\cZ$. Thus, $(\cW\veps,\cF\veps)$ is isomorphic to $\bigl(L_{\pi''}(\cV),L_{\pi''}(\FF 1)\bigr)$, where $\pi'':G/Z\rightarrow G/H$ is the natural homomorphism. Proposition~\ref{pr:WLpi1} and Remark~\ref{re:DLpi1} show that $(\cW,\cF)$ is isomorphic to $\bigl(L_{\pi'}(\cW\veps),L_{\pi'}(\cF\veps)\bigr)$, where $\pi':G\to G/Z$ is the natural homomorphism. Now Proposition \ref{pr:transitive} shows that $(\cW,\cF)$ is isomorphic to $\bigl(L_\pi(\cV),L_\pi(\FF 1)\bigr)$ where $\pi=\pi''\circ\pi':G\to G/H$. Of course, Theorem \ref{th:MpiNpi} gives this isomorphism immediately, but it is natural to proceed in two steps, through $\pi''$ and $\pi'$. These two steps are clearly separated in \cite{EK_Israel}.


\section{Finite-dimensional graded simple modules in characteristic zero}\label{se:char0}

\emph{In this last section, $\FF$ will be assumed algebraically closed of characteristic zero.}

If $\cW$ is a finite-dimensional $G$-graded simple left $\cR$-module and $Z$ is the support of the center $\cZ$ of $C(\cW)$ then $\cW$ decomposes as in Equation \eqref{eq:WWeis}: 
\[
\cW=\cW\veps_1\oplus\cdots\oplus\cW\veps_m,
\] 
where $\veps_1,\ldots,\veps_m$ are the primitive idempotents of $\cZ$. These $\cW\veps_i$ are $G/Z$-graded simple submodules that are not isomorphic to each other (Corollary \ref{co:Wei}).

In turn, each $\cW\veps_i$ is a direct sum of copies of a simple module $\cV^i$. The simple submodules of $\cW$ are, up to isomorphism, obtained by twisting one of these by the automorphisms $\alpha_\chi$ given by Equation \eqref{eq:alpha_chi}, for $\chi\in\wh{G}$ (Corollary \ref{co:V1alpha}).

There arises the natural question of determining the finite-dimensional simple $\cR$-modules that appear as simple submodules of $G$-graded simple $\cR$-modules.

\begin{theorem}\label{th:fin.ind.} 
Let $\cV$ be a finite-dimensional simple left $\cR$-module. Consider the subgroup
\[
K_\cV\bydef\{\chi\in \wh{G}: \cV^{\alpha_\chi}\ \text{is isomorphic to}\ \cV\}.
\]
Then $\cV$ is isomorphic to a simple submodule of a finite-dimensional $G$-graded simple left $\cR$-module if and only if the index $[\wh{G}:K_\cV]$ is finite.
\end{theorem}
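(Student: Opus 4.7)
The plan is to treat the two implications separately. The forward direction is a direct application of Section~\ref{se:fin_dim_cent}, while the converse requires constructing a $G$-graded simple ``envelope'' of $\cV$ from the $\wh G$-orbit of its annihilator.

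For the ``only if'' direction, suppose $\cV$ embeds as an ungraded simple submodule of a finite-dimensional $G$-graded simple left $\cR$-module $\cW$. Since $\cW$ is finite-dimensional, so is $C(\cW)$, and $\chr\FF=0$ automatically does not divide $\dim C(\cW)$. Corollary~\ref{co:WDfinite} then supplies a maximal graded subfield $\cF\le C(\cW)$ with $\cF\simeq\FF H$ for some finite $H\le G$, making $(\cW,\cF)$ an object of $\frN(\pi)$ for $\pi:G\to G/H$. Proposition~\ref{pr:inertia} identifies the inertia group $K_\cW$ with the subgroup $K_\cV$ of the statement, and Definition~\ref{df:inertia_Brauer} identifies $K_\cW$ with $Z^\perp$, where $Z\bydef\supp Z(C(\cW))$ is finite. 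Divisibility of $\FF^\times$ makes the restriction $\wh G\to\wh Z$ surjective with kernel $Z^\perp$, and in characteristic zero $|\wh Z|=|Z|$, so $[\wh G:K_\cV]=|Z|<\infty$.

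For the ``if'' direction, let $I\bydef\mathrm{Ann}_\cR(\cV)$. Since $\cV$ is finite-dimensional and simple over the algebraically closed field $\FF$, Jacobson density together with Burnside's theorem give $\cR/I\simeq M_d(\FF)$ with $d=\dim\cV$; in particular $I$ is a maximal two-sided ideal. For $\chi\in\wh G$, the annihilator of the twisted module $\cV^{\alpha_\chi}$ is $\alpha_\chi^{-1}(I)$, and two finite-dimensional simple $\cR$-modules are isomorphic precisely when they share an annihilator, so $K_\cV$ is precisely the stabiliser of $I$ under the $\wh G$-action by automorphisms. The hypothesis $[\wh G:K_\cV]=n<\infty$ then says the orbit of $I$ consists of $n$ distinct maximal ideals $I_1,\ldots,I_n$ (with $I_1=I$), and their intersection
\[
J\;\bydef\;I_1\cap\cdots\cap I_n
\]
is $\wh G$-invariant, hence a graded ideal of $\cR$. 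Pairwise comaximality of the $I_i$ together with the Chinese Remainder Theorem give
\[
\cR/J\;\simeq\;\prod_{i=1}^n\cR/I_i\;\simeq\;M_d(\FF)^n
\]
as ungraded algebras, while the induced $\wh G$-action on $\cR/J$ permutes the $n$ factors transitively. Because a subspace of a finite-dimensional $G$-graded algebra over $\FF$ is graded iff it is $\wh G$-invariant, it follows that the only graded two-sided ideals of $\cR/J$ are $0$ and $\cR/J$; equivalently, $\cR/J$ is $G$-graded simple.

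Finally, take a minimal nonzero graded left submodule $\cW$ of $\cR/J$ regarded as a graded module over itself; it exists by finite-dimensionality and is $G$-graded simple. Its annihilator in $\cR/J$ is a graded two-sided ideal, which must vanish by graded simplicity of $\cR/J$, so $\cW$ is faithful over $M_d(\FF)^n$. Faithfulness over a product of simple algebras forces each simple factor to contribute a summand to $\cW$, and in particular $\cV$ itself appears as an ungraded simple submodule of $\cW$, completing the construction. The main obstacle is the transitivity-gives-graded-simplicity step: one must verify that for the finite-dimensional graded algebra $\cR/J$, being a graded subspace coincides with being $\wh G$-invariant, which, via the finitely generated subgroup of $G$ spanned by the support of $\cR/J$, reduces to a standard divisibility-of-$\FF^\times$ argument.
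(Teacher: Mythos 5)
Your ``only if'' direction is the same as the paper's: apply Proposition~\ref{pr:inertia} to identify $K_\cV$ with the inertia group $Z^\perp$ of the ambient graded simple module, and use $[\wh G:Z^\perp]=\lvert Z\rvert<\infty$. For the ``if'' direction, however, you take a genuinely different route. The paper builds a $G/Z$-grading on $\End_\FF(\cV)$ directly: the intertwiners $\varphi_\chi$ ($\chi\in K_\cV$) are simultaneously diagonalizable under conjugation and $K_\cV$ separates the points of $G/Z$; then \cite[Theorem 2.6]{EK_mon} realizes $\End_\FF(\cV)\simeq\End_{\cD'}(\cW')$ for a $G/Z$-graded division algebra $\cD'$, which makes $\cW'$ a $G/Z$-graded simple left $\cR$-module whose (unique) simple submodule is $\cV$, and finally the induced module $\cW=I_{\pi'}(\cW')$ from \cite[\S 3.3]{EK_Israel} supplies the required $G$-graded simple module. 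Your argument instead works with the $\wh G$-orbit $\{I_1,\dots,I_n\}$ of the annihilator $I=\mathrm{Ann}_\cR(\cV)$: the intersection $J=\bigcap_i I_i$ is $\wh G$-stable, hence graded --- and note that the ``$\wh G$-invariant $\Leftrightarrow$ graded'' principle you flag as the main obstacle is already needed for $J\subseteq\cR$, not only for subspaces of the finite-dimensional quotient $\cR/J$, although the justification is the same: $\FF$ algebraically closed of characteristic zero guarantees $\wh G$ separates the points of $G$, and then Dedekind linear independence of characters does the rest. From there CRT gives $\cR/J\simeq M_d(\FF)^n$ with $\wh G$ permuting the factors transitively, so $\cR/J$ is $G$-graded simple, and a minimal graded left ideal $\cW\leq\cR/J$ is $G$-graded simple and faithful, hence contains a copy of the unique simple module of each matrix factor, in particular a copy of $\cV$. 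Your route is more elementary and self-contained, bypassing the graded division algebra realization theorem and the external appeal to \cite{EK_Israel}; the paper's route, in exchange, exhibits the compatible grading on $\End_\FF(\cV)$ explicitly (which is exactly what Corollary~\ref{co:Dcentral} needs) and identifies $\cW$ as an induced module, keeping the construction inside the loop-module framework developed in the earlier sections.
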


\begin{proof}
If $\cV$ is isomorphic to a simple submodule of a finite-dimensional $G$-graded simple module $\cW$, then Proposition \ref{pr:inertia} shows that $K_\cV$ is the inertia group of $\cW$, and hence coincides with $Z^\perp$, where $Z$ is the support of the center of $C(\cW)$. Then $[\wh{G}:K_\cV]=\lvert Z\rvert$ is finite. 

Conversely, assume that $\cV$ is a finite-dimensional simple module and $[\wh{G}:K_\cV]$ is finite. Let $\varrho_\cV:\cR\rightarrow \End_\FF(\cV)$ be the associated representation. The simplicity of $\cV$ implies that $\varrho_\cV$ is surjective. Consider the subgroup $Z$ of $G$ orthogonal to $K_\cV$:
\[
Z\bydef\{g\in G: \chi(g)=1\ \forall \chi\in K_\cV\}.
\]
Then $\lvert Z\rvert=[\wh{G}:K_\cV]$ is finite and $Z^\perp=K_\cV$. (If $G$ is finitely generated --- as we may assume ---  then any subgroup of finite index in $\wh{G}$ is Zariski closed because it contains the connected component of $\wh{G}$:  indeed, the connected component is a torus, so it is a divisible abelian group and hence has no nontrivial finite quotients.) For any $\chi\in K_\cV$, $\cV^{\alpha_\chi}$ is isomorphic to $\cV$, so there is a linear isomorphism $\varphi_\chi:\cV\rightarrow \cV$ such that
\[
\varphi_\chi(rv)=\alpha_\chi(r)\varphi_\chi(v),
\]
for all $r\in\cR$ and $v\in \cV$. By Schur's Lemma, $\varphi_\chi$ is unique up to scalar multiplication. Hence, for $\chi_1,\chi_2\in K_\cV$, $\varphi_{\chi_1}\varphi_{\chi_2}$ and $\varphi_{\chi_1\chi_2}$ are equal up to a scalar factor. Therefore, the map
\[
\begin{split}
K_\cV&\longrightarrow \Aut\bigl(\End_\FF(\cV)\bigr)\\
 \chi&\mapsto \Ad \varphi_\chi: f\mapsto \varphi_\chi f\varphi_\chi^{-1},
\end{split}
\]
is a group homomorphism.

Note that, for any $g\in G$ and $r_g\in\cR_g$,
\[
\varphi_\chi\varrho_\cV(r_g)\varphi_\chi^{-1}=\varrho_\cV\bigl(\alpha_\chi(r_g)\bigr)
=\chi(g)\varrho_\cV(r_g),
\]
so the linear endomorphisms $\Ad\varphi_\chi$ are simultaneously diagonalizable on $\End_\FF(\cV)=\varrho(\cR)$. By our assumptions on $\FF$, $K_\cV$ separates points of $G/Z$, so there is a unique $G/Z$-grading on $\End_\FF(\cV)$ such that, for any $g\in G$,
\[
\End_\FF(\cV)_{gZ}=\{f\in\End_\FF(\cV):\varphi_\chi f\varphi_\chi^{-1}=\chi(g)g\ \forall\chi\in K_\cV\},
\]
and $\varrho_\cV$ becomes a $G/Z$-graded homomorphism.

By \cite[Theorem 2.6]{EK_mon}, there is a $G/Z$-graded division algebra $\cD'$ and a $G/Z$-graded right $\cD'$-module $\cW'$ such that $\End_\FF(\cV)$ is $G/Z$-graded isomorphic to $\End_{\cD'}(\cW')$, hence the composition
\[
\cR\xrightarrow{\varrho_\cV}\End_\FF(\cV)\xrightarrow{\sim}\End_{\cD'}(\cW')
\]
endows $\cW'$ with a structure of $G/Z$-graded simple left $\cR$-module. (The graded simplicity follows by the surjectivity of $\varrho_\cV$.)
Besides, any simple submodule of $\cW'$ is isomorphic to $\cV$, because $\cV$ is, up to isomorphism, the only simple module for $\End_\FF(\cV)$.

Now the arguments in \cite[\S 3.3]{EK_Israel} show that the induced module $\cW\bydef I_{\pi'}(\cW')$ (see Proposition \ref{pr:loop_induced}), for the natural homomorphism $\pi':G\to G/Z$, is a $G$-graded simple left $\cR$-module, and $\cV$ is isomorphic to a simple submodule of $\cW$.
\end{proof}

\begin{remark}
By construction, $\cD'=C(\cW')$ is the graded Brauer invariant of $\cW$ and $C(\cW)\simeq L_{\pi'}(\cD')$ (Corollary \ref{co:Dcentral} and Remark \ref{re:DLpi1}). Let $T'$ be the support of $\cD'$, so the support of $C(\cW)$ is the inverse image $T$ of $T'$ under $\pi'$, and let $\beta'$ be the corresponding nondegenerate alternating bicharacter $T'\times T'\to\FF^\times$. Fix a maximal isotropic subgroup $H'\subset T'$ for $\beta'$, and let $\cF'=\bigoplus_{h\in H'}\cD'_h$ and $\cF=L_{\pi'}(\cF')$. Then $(\cW,\cF)$ is an object of $\frN(\pi)$, where $\pi$ is the natural homomorphism $G\to G/H$, and $(\cW,\cF)\simeq (L_{\pi}(\cV),L_\pi(\FF 1))$, where $\cV$ is equipped with a grading by $G/H$ to make it a graded $\cR$-module. Since such a grading is unique up to isomorphism and shift, the loop functor $L_\pi$ gives a bijection between, on the one hand, the classes of finite-dimensional $G$-graded simple modules under isomorphism and shift and, on the other hand, the finite $\wh{G}$-orbits of isomorphism classes of finite-dimensional simple modules (cf. Remark \ref{rm:classification}). Finally, note that $\cW$ and $\cW^{[g]}$ are isomorphic if and only if $g\in T$.
\end{remark}

In \cite{EK_Israel}, the finite-dimensional modules for a finite-dimensional $G$-graded semisimple Lie algebra $\cL$ over and algebraically closed field of characteristic zero are studied. Any such module is a left module for $\cR=U(\cL)$, the universal enveloping algebra, which inherits a $G$-grading from $\cL$. In this case, any finite-dimensional simple $\cR$-module $\cV$ satisfies that the index $[\wh{G}:K_\cV]$ is finite, due to the fact that the $G$-grading on $\cL$ is determined by a group homomorphism 
\[
\alpha:\wh{G}\to \Aut(\cL),\;
 \chi \mapsto\ \alpha_\chi,
\]
which we can compose with the natural homomorphism $\Aut(\cL)\to\Aut(\cL)/\inaut(\cL)$. This latter group is finite, being isomorphic to the group of automorphisms of the Dynkin diagram of $\cL$, and hence $[\wh{G}:\alpha^{-1}(\inaut(\cL))]$ is finite. But $\alpha^{-1}(\inaut(\cL))$ is contained in $K_\cV$ for any $\cV$. 

This is the reason behind the fact that any finite-dimensional simple $\cL$-module $\cV$ is isomorphic to a submodule of a $G$-graded simple module. In \cite{EK_Israel}, $\cV$ is not endowed with the structure of graded module by a quotient of $G$. Corollary \ref{co:Dcentral} shows how to do so.

%

\bibliographystyle{amsplain}

\end{document}